\theoremstyle{plain}
\newtheorem{te}{Theorem}[section]
\newtheorem{lem}[te]{Lemma}
\newtheorem{pr}[te]{Proposition}
\newtheorem{con}[te]{Conjecture}
\theoremstyle{remark}
\newtheorem{re}[te]{Remark}
\newtheorem*{ack*}{Acknowledgment}
\def\y{{\bf y}}
\def\w{{\bf w}}
\def\u{{\bf u}}
\def\v{{\bf v}}
\def\l{{\bf l}}
\def\0{{\bf 0}}
\def\R{{\mathbb R}}
\def\C{{\mathbb C}}
\def\Z{{\mathbb Z}}
\def\A{{\mathbb A}}
\def\nint{\mathop{\diagup\kern-13.0pt\int}}
\def\les{{\;\lessapprox}\;}
\def\dist{{\operatorname{dist}\,}}
\def\Ic{{\mathcal I}}
\def\Mc{{\mathcal M}}\def\Ec{{\mathcal E}}
\def\Bc{{\mathcal B}}\def\Rc{{\mathcal R}}
\def\Pc{{\mathcal P}}
\def\Hc{{\mathcal H}}
\def\a{{\mathbb {a}}}
\begin{document}

\title[$L^{12}$ square root cancellation]{On $L^{12}$ square root cancellation for exponential sums associated with nondegenerate curves in $\R^4$}

\author{Ciprian Demeter}
\address{Department of Mathematics, Indiana University, 831 East 3rd St., Bloomington IN 47405}
\email{demeterc@indiana.edu}

\keywords{decoupling, Weyl sums, square root cancellation}
\thanks{The author is  partially supported by the NSF grant DMS-1800305}
\thanks{ AMS subject classification: Primary 42A45, Secondary 11L07 }

\begin{abstract}
We prove sharp $L^{12}$ estimates for exponential sums associated with nondegenerate curves in $\R^4$.  We place Bourgain's seminal result \cite{Bo} in a larger framework that contains a continuum of estimates of different flavor. We enlarge the spectrum of methods by combining decoupling with quadratic Weyl sum estimates, to address  new cases of interest. All results are proved in the general framework of real analytic curves.	
\end{abstract}

\maketitle

\section{Introduction}

Throughout this paper,  $\phi_3,\phi_4$ will be real analytic functions defined on some open interval containing $[\frac12,1]$, and  satisfying
\begin{equation}
\label{1}
\|\phi_k'\|_{C^3}=\sum_{1\le n\le 4}\max_{\frac12\le t\le 1}|\phi_k^{(n)}(t)|\le A_1, \;\;k\in\{3,4\},
\end{equation}

\begin{equation}
\label{2}
A_2\le \left|\det
\begin{bmatrix}
\phi_3^{(3)}(t)&\phi_3^{(4)}(t)\\ \phi_4^{(3)}(s)&\phi_4^{(4)}(s)
\end{bmatrix}\right|\le A_3,\;\;t,s\in[\frac12,1],
\end{equation}
\begin{equation}
\label{3}
|\phi_3^{(3)}(t)|\ge A_4,\;\; t\in [\frac12,1].
\end{equation}$A_1,\ldots,A_4$ are positive numbers that will determine the implicit constants in various inequalities.
While $\phi_3,\phi_4$ being $C^4$ would suffice for our purposes, we choose to work with real analytic functions for purely aesthetic reasons.
The examples of most immediate interest are power functions  $\phi_3(t)=t^{a}$, $\phi_4(t)=t^b$, with  the real numbers $a$ and $b$ satisfying the restrictions $a\not=b$ and  $a,b\not\in\{0,1,2\}$.
\smallskip
	
For a finite interval $I\subset \Z$, we write (ignoring the dependence on  $\phi_k$)	
$$\Ec_{I,N}(x)=\sum_{n\in I}e(nx_1+n^2x_2+\phi_3(\frac{n}{N})x_3+\phi_4(\frac{n}{N})x_4).$$
We make the following conjecture. 	
\begin{con}
	\label{51}	
	Assume $\alpha\ge \beta\ge 0$ and $\alpha+\beta=3$.	
	Let $\omega_3=[0,N^{\alpha}]$ and $\omega_4=[0,N^{\beta}]$.
	Assume $\phi_3,\phi_4$ are real analytic on $(0,3)$ and satisfy \eqref{1}, \eqref{2} and \eqref{3}.
	Then
	\begin{equation}
	\label{66}
	\int_{[0,1]\times [0,1]\times \omega_3\times \omega_4}|\Ec_{[\frac{N}2,N],N}(x)|^{12}dx\lesssim_\epsilon N^{9+\epsilon}.
	\end{equation} 	
\end{con}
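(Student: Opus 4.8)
The plan is to combine $\ell^2$ decoupling for nondegenerate curves in $\R^3$ and $\R^4$ with the classical mean value estimates for quadratic Weyl sums; observe that \eqref{2} with $t=s$ says exactly that the curve $\gamma(t):=(t,t^2,\phi_3(t),\phi_4(t))$ has nonvanishing generalized torsion on $[\tfrac12,1]$, and \eqref{3} says the same for the sub-curve $(t,t^2,\phi_3(t))$ in $\R^3$, so the Bourgain--Demeter--Guth decoupling theorem applies uniformly to both. \emph{Step 1 (reduction to counting).} Since $n$ and $n^2$ are integers, $\Ec:=\Ec_{[\frac N2,N],N}$ is $\Z^2$-periodic in $(x_1,x_2)$. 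Expanding $|\Ec|^{12}=\Ec^6\overline{\Ec}^6$ and integrating, the $x_1,x_2$ integrals over $[0,1]^2$ impose $\sum_{i=1}^6 n_i=\sum_{j=1}^6 m_j$ and $\sum n_i^2=\sum m_j^2$, while the $x_3,x_4$ integrals over $\omega_3,\omega_4$ produce factors bounded by $\min(N^\alpha,|S_3|^{-1})$ and $\min(N^\beta,|S_4|^{-1})$, where $S_k:=\sum_i\phi_k(n_i/N)-\sum_j\phi_k(m_j/N)$. Decomposing the two minima dyadically and using $\alpha+\beta=3$, one sees that \eqref{66} follows from
\begin{equation}\label{Tbd}
T(\delta_3,\delta_4):=\#\Big\{\vec n,\vec m\in([\tfrac N2,N]\cap\Z)^6:\ \textstyle\sum n_i=\sum m_j,\ \sum n_i^2=\sum m_j^2,\ |S_3|\le\delta_3,\ |S_4|\le\delta_4\Big\}\ \lesssim_\epsilon\ N^{6+\epsilon}+N^{9+\epsilon}\delta_3\delta_4
\end{equation}
uniformly for $N^{-\alpha}\le\delta_3\lesssim1$ and $N^{-\beta}\le\delta_4\lesssim1$: the $N^{6+\epsilon}$ absorbs the diagonal $\{n_i\}=\{m_j\}$ and, in the dyadic sum, all the small scales, while $N^{9+\epsilon}\delta_3\delta_4$ telescopes to $O(N^{6+\epsilon})$ precisely because $\alpha+\beta=3$.

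\noindent\emph{Step 2 (the endpoint $\alpha=3$ and a marginal bound).} Pick a nonnegative majorant $\Phi_\delta\ge\mathbf 1_{[-\delta,\delta]}$ with $\widehat{\Phi_\delta}$ supported in $[-\delta^{-1},\delta^{-1}]$ and $\|\widehat{\Phi_\delta}\|_\infty\lesssim\delta$. Since $\sum_{\vec n,\vec m}\mathbf 1[\sum n_i=\sum m_j]\,\mathbf 1[\sum n_i^2=\sum m_j^2]\,e(S_3\xi)=\int_{[0,1]^2}|\mathcal F(x_1,x_2,\xi)|^{12}dx_1dx_2$, where $\mathcal F(x):=\sum_n e(nx_1+n^2x_2+\phi_3(n/N)x_3)$, inserting $\mathbf 1[|S_3|\le\delta_3]\le\Phi_{\delta_3}(S_3)=\int\widehat{\Phi_{\delta_3}}(\xi)e(S_3\xi)\,d\xi$ yields $T(\delta_3,1)\lesssim\delta_3\int_{[0,1]^2\times[0,\delta_3^{-1}]}|\mathcal F|^{12}$. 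After rescaling $(n/N)\mapsto t$, $\mathcal F$ is an exponential sum along the nondegenerate $\R^3$ curve $(t,t^2,\phi_3(t))$, for which $\ell^2 L^{12}$ decoupling is \emph{critical}; this gives $\int_{[0,1]^2\times[0,M]}|\mathcal F|^{12}\lesssim_\epsilon N^{9+\epsilon}$ for every $1\le M\le N^3$, hence $T(\delta_3,1)\lesssim_\epsilon N^{9+\epsilon}\delta_3$. In particular \eqref{Tbd}, and so \eqref{66}, holds when $\alpha=3$ (then $\beta=0$ and the $\delta_4$-constraint is vacuous). Symmetrically, whenever $\phi_4'''$ does not vanish one also gets $T(1,\delta_4)\lesssim_\epsilon N^{9+\epsilon}\delta_4$, whence $T(\delta_3,\delta_4)\lesssim N^{9+\epsilon}\sqrt{\delta_3\delta_4}$ --- but this only proves \eqref{66} up to a loss of $N^{3/2}$, and closing that gap is the crux.

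\noindent\emph{Step 3 (the general case, and the main obstacle).} For $\alpha<3$ one must upgrade the square-root weakening to the full product bound in \eqref{Tbd}, which genuinely needs four-dimensional curvature. The approach I would pursue is to partition $[\tfrac N2,N]$ into $\sim N^{\alpha/3}$ arcs $J$ of common length $P\sim N^{1-\alpha/3}$, chosen so that the cubic and higher Taylor corrections of $\phi_3,\phi_4$ contribute $o(1)$ to the phase on $\omega_3\times\omega_4$ over each $J$; after an affine change of variables adapted to the Frenet frame of $\gamma$ at the centre of $J$, the block $\Ec_{J,N}$ becomes, up to a bounded factor, a quadratic Weyl sum $\sum_{|m|\le P/2}e(m\mu_1+m^2\mu_2)$ with $(\mu_1,\mu_2)$ a linear image of $(x_1,\dots,x_4)$ ranging over a two-dimensional parallelogram. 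One then separates the arcs by $\ell^2$ decoupling for $\gamma$ in $\R^4$ --- which costs only $N^\epsilon$, since $L^{12}$ is \emph{sub}critical there --- and estimates each arc's contribution using the sharp sixth-moment bound $\int_{[0,1]^2}|\sum_{m\le P}e(mx_1+m^2x_2)|^6\lesssim_\epsilon P^{3+\epsilon}$ (the ``quadratic Weyl sum'' input), the major/minor arc decomposition of the Weyl sum, and a careful count of how the parameters $(\mu_1,\mu_2)$ coming from $\omega_3,\omega_4$ overlap; optimizing $P$ against $(\alpha,\beta)$ should produce \eqref{Tbd}. The genuine difficulty is that the region $[0,1]^2\times\omega_3\times\omega_4$, whose side-lengths $1,1,N^\alpha,N^\beta$ do not obey the $\delta:\delta^2:\delta^3:\delta^4$ scaling of the caps of $\gamma$, is badly mismatched with the geometry of every off-the-shelf decoupling: replacing it by a full dual box (or ball) of the curve loses a polynomial power of $N$. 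One must instead use versions of the decoupling, or of the Weyl-sum input, that are sensitive to the short directions $\omega_3,\omega_4$, and the efficient choice --- indeed, whether square-root cancellation even survives --- varies along the segment $\alpha+\beta=3$, $\alpha\ge\beta\ge0$: near $(3,0)$ the fourth coordinate is essentially inert and one is back to the $\R^3$ Vinogradov-type estimate of Step 2, whereas near $(\tfrac32,\tfrac32)$ the curve resolves all four directions and the full strength of $\R^4$ decoupling is needed, the intermediate regime --- where neither pure decoupling nor pure Weyl-sum counting is sharp on its own --- being exactly what keeps the general conjecture open.
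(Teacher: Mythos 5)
You have not proved the statement, and you could not have been expected to: what you were given is Conjecture \ref{51}, which is open, and the paper itself establishes it only in the range $\tfrac32\le\alpha\le 2$ (Theorem \ref{4linver}), the endpoint cases $(\alpha,\beta)=(2,1)$ and $(3,0)$ being previously known. Your own text concedes this in Step 3 ("exactly what keeps the general conjecture open"), so the proposal is a program plus two special observations, not a proof. Steps 1 and 2 are essentially sound but only recover known territory: the reduction to the counting bound \eqref{Tbd} is a standard reformulation of \eqref{66}, and the majorant argument in Step 2 reproduces the $\alpha=3$ case from the Vinogradov-type $L^{12}$ bound for $(t,t^2,\phi_3(t))$, while the "symmetric" product of the two marginal bounds loses $N^{3/2}$ at $\alpha=\beta=\tfrac32$, i.e.\ it misses precisely the cases the paper actually proves.

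The concrete gap is in Step 3, and it sits exactly where the paper is most careful. You propose to separate arcs of length $P\sim N^{1-\alpha/3}$ by linear $\ell^2$ decoupling for the curve in $\R^4$ "at only $N^\epsilon$ cost." But the relevant decoupling must be performed on the anisotropic domain $[0,1]^2\times\omega_3\times\omega_4$, not on a ball dual to the caps, and the paper argues (citing \cite{Hon}) that the linear $L^{12}$ decoupling into arcs of length (scale)$^{-1/2}$ likely has critical exponent strictly larger than $12$, so the linear route is inefficient; this is the stated reason the paper bilinearizes. Its actual mechanism is: a multiscale bilinear reduction with rescaling (Proposition \ref{9}) reducing Theorem \ref{4linver} to the bilinear Theorem \ref{4}; then, for each regime of $\alpha$, bilinear $\ell^2(L^6)$ decoupling (Theorem \ref{49}, and Theorem \ref{65}) applied on boxes whose side length is tuned so that cubic terms become negligible, followed by quadratic Weyl sum estimates organized through the counting Lemmas \ref{42} and \ref{76}; Remark \ref{69} explains why both $\ell^6(L^6)$ decoupling and decoupling on larger boxes lose a power of $N$, which is the same loss your Step 2 product bound exhibits. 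Your sketch identifies the right arc length and the right quadratic-Weyl-sum input, but it does not supply the bilinear structure, the $\ell^2$ (rather than $\ell^6$) decoupling on small boxes, or the overlap-counting lemmas that make the count close, and it offers no mechanism at all for $2<\alpha<3$, which remains open even in the paper.
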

For virtually all conceivable applications of \eqref{66}, both $\phi_3$ and $\phi_4$ will satisfy \eqref{3}. Because of this symmetry, our restriction $\alpha\ge \beta$ is essentially meaningless.
		
Let us write $A\lesssim B$ or $A=O(B)$ if $|A|\le CB$ for some, possibly large, but universal constant $C$. The notation $A\sim B$ will mean that $A\lesssim B$ and $B\lesssim A$ at the same time. We will write $A\ll B$ or $A=o(B)$ if $|A|\le cB$, for some small enough, universal constant $c$. The notation $A\les B$ will mean $A\lesssim (\log N)^{O(1)}B$, where $N$ will be the key scale parameter.
\smallskip

Since $|\Ec_{[\frac{N}{2},N],N}(x)|\sim N$ if $x\in[0,o(\frac1N)]\times [0,o(\frac1{N^2})]\times [0,o(1)]^2$, the exponent 9 is optimal in \eqref{66}. While the large value $N$ is attained by $|\Ec_{[\frac{N}{2},N],N}(x)|$ for a small subset of $x$, estimate \eqref{66} implies that the  average value of the exponential sum on the given domain  is $O(N^{\frac12+\epsilon})$, when measured in $L^{12}$. We call this $L^{12}$ square root cancellation. The relevance of the requirement $\alpha+\beta=3$ is that it guarantees
$$|[0,1]\times[0,1]\times \omega_3\times \omega_4|(\sqrt{N})^{12}=N^9.$$

The case $(\alpha,\beta)=(2,1)$ of the conjecture has been settled in \cite{Bo}. This inequality was proposed by Huxley \cite{Hu}, with $\phi_3(t)=t^{3/2}$, $\phi_4(t)=t^{1/2}$. In \cite{Bo}, it serves as the main ingredient in sharpening the record on the Lindel\"of hypothesis.

The only other known case prior to our work was $(\alpha,\beta)=(3,0)$. The variable $x_4$ and the function $\phi_4$ play no role in this case, as $\phi_4(\frac{n}{N})x_4=O(1)$. We treat $e(\phi_4(\frac{n}{N})x_4)$ as a coefficient $c_n=O(1)$.
In this case,  \eqref{66} follows from the inequality
$$\int_{[0,1]^2\times [0,N^3]}|\sum_{n\in I}c_ne(nx_1+n^2x_2+\phi_3(\frac{n}{N})x_3|^{12}dx_1dx_2dx_3\lesssim_{\epsilon}N^{9+\epsilon}\|c_n\|_{l^\infty}.$$
Assuming $\phi_3$ satisfies \eqref{1} and \eqref{3},  this was known as the Main Conjecture in Vinogradov's Mean Value Theorem, and was first solved in \cite{Wo}, then in \cite{BDG}.
This reduction fails for all other values  $\alpha<3$, since the length of the interval $\omega_3$ becomes too small. The proofs in \cite{Bo} and \cite{BDG} for $\alpha=2$ and $\alpha=3$
 are fairly different, in spite of both relying entirely on abstract decoupling.
\medskip

Our main result here verifies Conjecture \ref{51} in the range $\frac32\le \alpha< 2$.

\begin{te}
	\label{4linver}	
	Assume that $\frac32\le \alpha\le 2$. Assume $\phi_3,\phi_4$ are real analytic on $(0,3)$ and satisfy \eqref{1}, \eqref{2} and \eqref{3}.
	
	Then
	\begin{equation}
	\label{106}
	\int_{[0,1]\times [0,1]\times \omega_3\times \omega_4}|\Ec_{[\frac{N}{2},N],N}(x)|^{12}dx\lesssim_\epsilon N^{9+\epsilon}.
	\end{equation}	
\end{te}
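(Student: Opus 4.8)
The plan is to prove the estimate by combining two classical ingredients in a "two-scale" decomposition: a decoupling inequality at a suitable intermediate scale, and a sharp $L^{12}$ (in fact $L^6$ in two variables) square root cancellation bound for the quadratic part $e(nx_1+n^2x_2)$, i.e. the $\ell^6$-decoupling / Weyl-sum estimate at the level of the parabola. First I would pick the scale $\delta=N^{-1/2}$ and partition $[\frac N2,N]$ into $\sim N^{1/2}$ arcs $J$ of length $N^{1/2}$. On each arc $J$ of length $N^{1/2}$ centered at $n_J$, the two cubic phases $\phi_3(n/N)$ and $\phi_4(n/N)$ are, up to the precision that matters after multiplication by $x_3\in\omega_3=[0,N^\alpha]$ and $x_4\in\omega_4=[0,N^\beta]$, essentially affine in $n$ plus a controlled quadratic remainder; the key point, and where the hypotheses \eqref{1}–\eqref{3} enter, is that the second-order Taylor coefficients of $\phi_3,\phi_4$ on $J$ produce a genuinely two-dimensional "curvature vector" (thanks to the nonvanishing determinant in \eqref{2} and $|\phi_3^{(3)}|\ge A_4$ in \eqref{3}), while the third-order and higher remainders, when multiplied by $x_3,x_4$ on the respective boxes, are of size $O(N^{\alpha}N^{-3/2})=O(N^{\alpha-3/2})$ and $O(N^{\beta}N^{-3/2})$, which are $\lesssim 1$ precisely because $\alpha<2\le 3/2+1/2$ wait—one must check $\alpha\le 3/2$? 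Here is the real constraint: for $\alpha$ in $[\frac32,2]$ one has $N^{\alpha-3/2}\le N^{1/2}$, so the cubic remainder is \emph{not} automatically negligible, and this is why a naive single application of decoupling for the full curve does not suffice; instead one keeps the quadratic Taylor term of $\phi_3,\phi_4$ as an honest parabolic frequency and treats the cubic-and-higher terms as part of a lower-dimensional curve after rescaling.

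The main steps, in order, are: (i) reduce \eqref{106} to a localized estimate on a ball of radius comparable to the dual box and pass to a normalized bilinear/broad--narrow formulation if needed, so that it suffices to bound $\int |\Ec|^{12}$ by $N^{9+\epsilon}$ in the "broad" regime where at least two arcs $J$ contribute transversally; (ii) apply $\ell^2 L^{12}$ decoupling for the parabola $(t,t^2)$ in the variables $(x_1,x_2)$ over the $N^{1/2}$ arcs of length $N^{-1/2}$ in $[0,1]$ arising from rescaling $n\mapsto n/N$—this decoupling has a loss $N^{\epsilon}$ and reduces the exponential sum to a sum over single arcs $J$; (iii) on each single arc $J$, Taylor-expand $\phi_3,\phi_4$ to second order, rescale $n=n_J+m$ with $|m|\le N^{1/2}$, and observe that after the change of variables absorbing the affine and quadratic terms, the remaining phase in $x_3,x_4$ is governed by a one-dimensional nondegenerate curve $(\psi_3(m/N^{1/2}),\psi_4(m/N^{1/2}))$ to which one can apply the known $L^{12}$ decoupling for nondegenerate curves in $\R^2$ (equivalently, the $(\alpha,\beta)=(3,0)$-type Vinogradov input and quadratic Weyl sums in two variables) on the correspondingly rescaled boxes; (iv) count the arcs, collect the losses, and use $\alpha+\beta=3$ to balance the measure of the box against $(\sqrt N)^{12}$, exactly as in the identity $|[0,1]^2\times\omega_3\times\omega_4|(\sqrt N)^{12}=N^9$.

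The hard part, and the step I expect to be the main obstacle, is (iii): making the two-scale decomposition actually work requires that at the intermediate scale $N^{1/2}$ the pair $(\phi_3,\phi_4)$ restricted to each arc, after subtracting the affine part and rescaling, still enjoys a nondegeneracy that is \emph{uniform in the arc} and quantitatively controlled by $A_1,\dots,A_4$ — this is a consequence of \eqref{1}–\eqref{3} but needs a careful Taylor/interpolation argument, plus one must handle the "small" arcs near $t=\frac12$ and $t=1$ and the possible vanishing of the rescaled quadratic curvature on a few arcs by an auxiliary argument (further decoupling at a finer scale, or directly applying a lower-degree bound). A secondary technical difficulty is keeping the two-variable and the remaining-two-variable Fubini pieces compatible: one must verify that the parabolic decoupling in $(x_1,x_2)$ and the curve decoupling in $(x_3,x_4)$ can be iterated without incurring a loss worse than $N^\epsilon$, which is where working with real analytic $\phi_k$ (so that all Taylor remainders are uniformly controlled on $[\frac12,1]$) rather than merely $C^4$ makes the bookkeeping clean, and where the restriction $\alpha\le 2$ is used to guarantee the cubic remainder on an arc, namely $O(N^{\alpha-3/2})\le O(N^{1/2})$, never exceeds the scale one is rescaling by.
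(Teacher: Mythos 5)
There is a genuine gap, and it sits at the center of your plan: steps (ii) and (iii) invoke an $\ell^2 L^{12}$ decoupling for the parabola in the $(x_1,x_2)$ variables, and an $L^{12}$ decoupling for a nondegenerate planar curve in the $(x_3,x_4)$ variables, each with only an $N^\epsilon$ loss. No such inequalities exist: the critical exponent for $\ell^2$ decoupling of the parabola (and of any nondegenerate curve in $\R^2$) is $6$, and beyond $p=6$ the decoupling constant necessarily grows like a positive power of the scale (at $p=12$ the loss is $R^{1/8}$ on a ball of radius $R$), which already destroys any hope of the sharp exponent $9$ in \eqref{106}. This is precisely the obstruction the paper is organized around: because the linear $L^{12}$ theory is not available at its critical exponent, Theorem \ref{4linver} is first reduced to the bilinear estimate of Theorem \ref{4} — via Proposition \ref{9} and the broad--narrow iteration of Section \ref{oriruigui}, where separated short intervals are rescaled to a new pair $(\tilde{\phi}_3,\tilde{\phi}_4)$ satisfying \eqref{1}--\eqref{3} uniformly — exactly so that the factorization $12=6\times 2$ gives access to $L^6$ decoupling at its critical exponent. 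Your step (i) mentions a bilinear formulation only "if needed" and never develops it; this reduction is itself a nontrivial part of the argument.

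Even granting some corrected form of the two-scale decoupling, a purely decoupling-based scheme of the kind you sketch cannot reach $N^{9+\epsilon}$ in the range $\frac32\le\alpha<2$. Section \ref{s6} shows concretely that even the correct bilinear $l^6(L^6)$ decoupling (Theorem \ref{48}), applied at the natural scale, overshoots by a factor of $N^{1/2}$ when $\alpha=\frac32$, and Remark \ref{69} explains that no choice of decoupling scale repairs this. The sharp bound is recovered only by combining the finer $l^2(L^6)$ bilinear decoupling (Theorem \ref{49}, and its variant Theorem \ref{65}) with quadratic Weyl sum estimates (Lemma \ref{22}) and the counting arguments of Lemma \ref{42} and Lemma \ref{76}; for $\frac95\le\alpha<2$ a second decoupling in the spirit of Bourgain's $\alpha=2$ argument is also needed. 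Your proposal contains no number-theoretic input and no substitute for it, so even with the bookkeeping of step (iv) carried out carefully the method stalls at an exponent strictly larger than $9$.
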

If $\phi_3(t)=t^a$, $\phi_4(t)=t^b$ with $\alpha\le a$, $\beta\le b$, $a+b\le 9$, $a\not=b$ and  $a,b\not\in\{0,1,2\}$, a very simple rescaling argument for each member of the sum
$$\Ec_{[1,N],N}(x)=\sum_{1\le M\le N\atop{M\text{ dyadic}}}\Ec_{[\frac{M}{2},M],N}(x)$$
shows that \eqref{106} holds with $\Ec_{[\frac{N}{2},N],N}$ replaced with $\Ec_{[1,N],N}(x)$. In particular, this is always the case for the moment curve $\phi_3(t)=t^3$, $\phi_4(t)=t^4$. Other cases such as  $(\alpha,\beta)=(2,1)$, $(a,b)=(\frac32,\frac12)$ require slightly more sophisticated arguments similar to the one in \cite{Bo}, but will not be pursued here.
\smallskip

Theorem \ref{4linver} will follow from its bilinear analog. We prove this reduction  in Section \ref{oriruigui}.
\begin{te}
\label{4}	
Let $I_1,I_2$ be intervals of length $\sim N$ in $[\frac{N}{2},N]$, with $\dist(I_1,I_2)\sim {N}$. Assume $\phi_3,\phi_4$ are real analytic on $(0,2)$ and satisfy \eqref{1}, \eqref{2} and \eqref{3}. Assume that $\frac32\le \alpha\le 2$.

Then
$$\int_{[0,1]\times [0,1]\times \omega_3\times \omega_4}|\Ec_{I_1,N}(x)\Ec_{I_2,N}(x)|^6dx\lesssim_\epsilon N^{9+\epsilon}.$$
The implicit constant in this inequality is uniform over $A_1,\ldots,A_4\sim 1$.	
\end{te}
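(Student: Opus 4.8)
The plan is to prove Theorem \ref{4} by combining $\ell^2$-decoupling for nondegenerate curves in $\R^4$ with a parabolic rescaling that reduces the problem at scale $N$ to a problem at a smaller scale, then closing the argument via a bootstrapping/induction-on-scales scheme supplemented by quadratic Weyl sum estimates in the variables $(x_1,x_2)$. The starting observation is that the curve $t\mapsto (t,t^2,\phi_3(t),\phi_4(t))$ is nondegenerate on $[\frac12,1]$ precisely because of hypotheses \eqref{1}, \eqref{2}, \eqref{3}: the torsion-type determinant \eqref{2} together with \eqref{3} guarantees the $4\times 4$ Wronskian is bounded away from zero. So the sharp $\ell^2 L^{12}$ decoupling theorem of Bourgain--Demeter--Guth applies to $\Ec_{I,N}$ and gives, after partitioning $I$ into $\sim N^{1/2}$ arcs of length $N^{1/2}$,
\begin{equation}
\label{decstep}
\|\Ec_{I,N}\|_{L^{12}(w_B)}\lessapprox_\epsilon N^\epsilon \Bigl(\sum_{J}\|\Ec_{J,N}\|_{L^{12}(w_B)}^2\Bigr)^{1/2},
\end{equation}
on any box $B$ dual to the relevant frequency scales; here one must be careful that the decoupling is applied on the correct anisotropic box $[0,1]\times[0,1]\times\omega_3\times\omega_4$, which matches the natural frequency localization of $\Ec_{I,N}$ after the rescaling $t=n/N$. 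The bilinear hypothesis $\dist(I_1,I_2)\sim N$ is used, as in \cite{Bo}, to gain transversality: after decoupling both factors into arcs, the pairs of arcs $(J_1,J_2)$ with $J_i\subset I_i$ are quantitatively separated, which upgrades the linear decoupling loss into a genuine bilinear (Brascamp--Lieb type) gain, or alternatively lets one run a broad-narrow dichotomy.

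The key steps, in order, would be: (1) rescale each arc $J$ of length $N^{1/2}$ back to an interval of integers of length $\sim N^{1/2}$, tracking how the domain $[0,1]^2\times\omega_3\times\omega_4$ transforms — this is where the constraint $\alpha+\beta=3$ and the range $\frac32\le\alpha\le2$ enter, since the rescaled $x_3,x_4$ intervals must still be long enough to feed back into the same type of estimate but at scale $N^{1/2}$; (2) for the rescaled pieces, the frequency data in $(x_3,x_4)$ now lives on an interval too short (length $\sim N^{\alpha/2}$, which for $\alpha<3$ is genuinely shorter than the cube would require) to simply invoke Vinogradov, so instead treat $e(\phi_3 x_3 + \phi_4 x_4)$ as nearly constant over dyadic sub-blocks and reduce to a \emph{two-dimensional} exponential sum $\sum c_n e(nx_1+n^2x_2)$; (3) apply the sharp $L^6$ (i.e. $p=6$) estimate for quadratic Weyl sums in two variables — this is the classical $\|\sum_{n\le M}e(nx_1+n^2x_2)\|_{L^6([0,1]^2)}\lessapprox M^{1/2}$ square-root-cancellation bound (equivalently the $r_3$-type count), which is the ``quadratic Weyl sum estimate'' advertised in the abstract; (4) sum the contributions over the $\sim N^{1/2}$ arcs using \eqref{decstep} and Hölder, and check that the exponents balance to exactly $N^{9+\epsilon}$; (5) run the whole thing as an induction on $N$ (or on the number of scales), so that the constant stays $\epsilon$-admissible.

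The main obstacle I expect is step (2)--(3): making rigorous the reduction from the genuinely four-dimensional oscillatory sum to a two-dimensional quadratic one without losing more than $N^\epsilon$. The subtlety is that over a block of $n$'s of length $L$, the phase $\phi_3(n/N)x_3$ varies by $\sim L\, \phi_3'(n/N) N^{\alpha-1}/N \sim L N^{\alpha-2}$ (and similarly for $\phi_4$), so one can only treat it as constant when $L\lessapprox N^{2-\alpha}$; since $\alpha\in[\frac32,2]$ this threshold $N^{2-\alpha}\in[1,N^{1/2}]$ is at most the arc length $N^{1/2}$, which is exactly why the range $\alpha\ge\frac32$ is needed — below $\frac32$ one cannot fit the quadratic scale $N^{1/2}$ inside a single ``frozen'' block and a different argument (as in \cite{Bo} for $\alpha=2$, or \cite{BDG} for $\alpha=3$) would be required. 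Handling the boundary case $\alpha=2$ cleanly, and making the transversality/bilinear reduction interact correctly with this block decomposition (so that the two-dimensional Weyl estimate can itself be used in a bilinear form, where the sharp bound is $M^{1/2}$ with no $\epsilon$-loss and with explicit transversality gain), will require the bulk of the technical work; a secondary but routine difficulty is bookkeeping the weights $w_B$ and the anisotropic rescaling so that Fubini in the $(x_1,x_2)$ versus $(x_3,x_4)$ variables is legitimate at every stage.
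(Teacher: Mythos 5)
There is a genuine gap, and it occurs at the very first step. You propose to apply linear $\ell^2L^{12}$ decoupling for the nondegenerate curve in $\R^4$, partitioning $I$ into arcs of length $N^{1/2}$. But decoupling a nondegenerate curve in $\R^4$ into arcs of length $\delta$ requires spatial balls of radius $\delta^{-4}$; for $\delta=N^{-1/2}$ (after the rescaling $t=n/N$) that means balls of radius $N^2$ in \emph{all four} directions. The actual domain, in the rescaled variables, is only $\sim N\times N^2\times N^{\alpha}\times N^{\beta}$ with $\alpha\le 2$, $\beta\le\frac32$, and you cannot enlarge the $x_3,x_4$ ranges by periodicity because $\phi_3(\frac nN)x_3$ and $\phi_4(\frac nN)x_4$ are not $1$-periodic in $x_3,x_4$. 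This is exactly the obstruction that makes the conjecture nontrivial for $\alpha<3$. Moreover, even where a small-ball linear decoupling is available (balls of radius $M$, arcs of length $M^{-1/2}$), the paper points out (citing the sharp $L^{10}$ result for the twisted cubic) that its critical exponent is expected to exceed $12$, so the linear $L^{12}$ route is inherently lossy; the entire design of the paper is to bilinearize so as to reach the parabola's $L^6$ decoupling at its critical exponent, and — crucially for $\frac32\le\alpha<2$ — to use the \emph{bilinear $l^2(L^6)$} inequality (Theorems \ref{49} and \ref{65}) rather than $l^6(L^6)$. Your transversality remark ("Brascamp--Lieb type gain or broad--narrow") does not supply this mechanism: in the paper bilinearity is used to rewrite the product as an extension operator of a split surface $(s_1,s_2,\psi_1(s_1)+\psi_2(s_2),\psi_3(s_1)+\psi_4(s_2))$, which is what makes the $l^2(L^6)$ decoupling and the transversality bound \eqref{52} available.

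The second gap is in your steps (2)--(3). Freezing $e(\phi_3x_3+\phi_4x_4)$ on blocks and invoking the classical full-torus bound $\|\sum_{n\le M}e(nx_1+n^2x_2)\|_{L^6([0,1]^2)}\lessapprox M^{1/2}$ is too crude: the paper shows quantitatively (the computation on the symmetric domain $S$ in Section \ref{s6}, and Remark \ref{69}) that arguments which decouple and then estimate each arc pair by a domain-independent quadratic Weyl bound lose a factor $N^{1/2}$ against the target $N^{9+\epsilon}$. What actually closes the argument is finer information that your outline never produces: after the change of variables of Lemma \ref{18}, each arc pair contributes an integral $I_{a}$ in which the $w$-variable is confined to an interval of length $\sim M^{-2}$ around $\frac a{M^2}$, and one must count, via Lemma \ref{42}, how many arc pairs $(h_1,h_2)$ give rise to a given pair $(a_1,a_2)$ (for fixed $p,l_1,l_2$), and then sum $I_a^{2/3}$ over structured families using the major/minor-arc estimates of Lemma \ref{22} (Lemma \ref{76}). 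None of this is recoverable from the full-torus $L^6$ Weyl estimate alone, so as written the exponents in your step (4) will not balance to $N^{9+\epsilon}$ for any $\alpha$ in the claimed range (including $\alpha=2$, where the paper instead runs Bourgain's two-step decoupling with \eqref{52} and Theorem \ref{65}).
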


The reason we prove Theorem \ref{4linver} using bilinear, as opposed to linear or trilinear methods, is rather delicate.
As explained earlier, our results are sharp, the exponent 9 in \eqref{106} cannot be lowered. Each of the decoupling inequalities relevant to this paper has a certain critical exponent $p_c>2$. Experience shows that to achieve sharp results via decoupling, this tool must be used at the critical exponent $p_c$. When we apply decoupling on spatial balls of radius $M$, we decouple the curve into arcs of length $M^{-1/2}$. It seems very likely that the critical exponent for such a decoupling in the linear setting is larger than 12. See \cite{Hon} for a detailed discussion on this. Because of this, using linear $L^{12}$ decoupling turns out to be inefficient. Bilinearizing instead, gives us access to the $L^6$ decoupling of the parabola. This is an ideal scenario, since 6 is precisely the critical exponent in this setting.

The fact that $12=6\times 2$ turns out to be crucial to our argument. The other  factorization $12=4\times 3$ is also inefficient. An approach based on trilinearity would use $L^4$ estimates for hypersurfaces in $\R^4$. But the critical exponent here is $10/3$, not $4$.

Most of the paper is devoted to proving Theorem \ref{4}. The proof  will combine abstract decoupling methods with quadratic Weyl sum estimates. The decoupling techniques are introduced in Section \ref{s4}.
While these results are by now standard, the observation that the superficially stronger $l^2(L^{12})$  decoupling holds true for nondegenerate curves in the bilinear setting  appears to be new. One of the key features in our argument is the use of this inequality in places where the $l^{12}(L^{12})$  decoupling used in \cite{Bo} becomes inefficient. We combine this finer decoupling with estimates from number theory. In short, here is how our approach works. The initial integral involves quartic Weyl sums, for which sharp estimates are totally out of reach at the moment. Decoupling is applied once or twice in order to lower the complexity of the sums, to the level of manageable quadratic Weyl sums. These sums will appear in various combinations, and need to be tackled with extreme care, using various counting arguments such as Lemma \ref{42} and Lemma \ref{76}.

In Section \ref{Bsarg}, we start with a careful examination of Bourgain's argument from \cite{Bo}, for $\alpha=2$. In many ways, this case  turns out to be the easiest, as it works via just $l^{12}L^{12}$ decoupling, and  without any input from number theory. In Section \ref{s6} we introduce our new methodology, addressing the symmetric case $\alpha=\beta=\frac32.$ This ends up being the most delicate case, since it captures the biggest  region near the origin where constructive (and near constructive) interference occurs.
Also, it is in this case that the curve looks most genuinely four dimensional, as both $\omega_3$ and $\omega_4$ are large. For comparison, recall that when $\alpha=3$ the curve degenerates to a three dimensional one.
Sections \ref{s7} and \ref{s8} extend our method to the remaining cases, by successively building on each other. The case $\frac95\le \alpha<2$ combines elements of both approaches.

To the best of our knowledge, this paper represents the first systematic effort to combine abstract decoupling with Weyl sum estimates. The results proved here are part of the vast program initiated in \cite{DGW}, concerned with proving sharp $L^p$ estimates for the moment curve on spatial domains smaller than the torus. In \cite{DGW} only the moment curve in $\R^3$ is considered, and all estimates there rely solely on decoupling techniques.

There remain a lot of interesting related questions. One of them has to do with proving Conjecture \ref{51} in the range $2< \alpha<3$. We may speculate that the solution would combine some of the tools from our paper with those used to solve the case $\alpha=3$, see also Remark \ref{66345}. Second, $L^p$ moments are also worth investigating for smaller values $p<12$, in particular for $p=10$. See for example  \cite{Bo34} for some recent progress and some interesting applications. Section \ref{so} of our paper contains an  example that describes some of the enemies and limitations of square root cancellation in this setting. It seems plausible that small cap decoupling for the parabola (see \cite{DGW}) will be the right tool to attack this problem. We hope to address some of these questions in future work.        	

\begin{ack*}
	The author is grateful to Hongki Jung and Zane Li for pointing out a few typos in the earlier version of the manuscript.
\end{ack*}

\section{Decoupling for nondegenerate curves in $\R^4$}
\label{s4}

Let us start by recalling the decoupling for nondegenerate  curves in $\R^2$.
\begin{te}[\cite{BD3}]
\label{decpar}	
	Let $\phi:[0,1]\to\R$ be a $C^3$ function, with $\|\phi'\|_{C^2}=A_5<\infty$ and  $\min_{0\le t\le 1}|\phi''(t)|=A_6>0.$ Then for each $f:[0,1]\to\C$ and each ball $B_N\subset \R^2$ with radius $N$ we have
$$\|\int_{[0,1]}f(t)e(tu+\phi(t)w)dt\|_{L_{u,w}^6(B_N)}\lesssim_\epsilon N^\epsilon (\sum_{J\subset [0,1]\atop{|J|=N^{-1/2}}} \|\int_{J}f(t)e(tu+\phi(t)w)dt\|_{L_{u,w}^6(B_N)}^2)^{1/2}. $$		
The implicit constant is uniform over $A_5\sim 1$, $A_6\sim 1$.
\end{te}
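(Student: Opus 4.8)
I would prove Theorem \ref{decpar} by the Bourgain--Guth induction on scales, bootstrapping the linear inequality together with a bilinear companion. The hypothesis $|\phi''|\ge A_6$ is used only to make parabolic rescaling legitimate, and the uniformity over $A_5,A_6\sim 1$ is maintained throughout.

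\textbf{Reductions.} Since $\phi''$ is continuous and $|\phi''|\ge A_6>0$ on $[0,1]$, it has constant sign; replacing $\phi$ by $-\phi$ if necessary (which preserves all $L^6$ norms) we may take $\phi$ convex with $\phi''\sim 1$. Put $\delta=N^{-1/2}$, so that we are decoupling the $\phi$-curve into $\delta$-arcs on a ball of radius $\delta^{-2}$, and let $D(\delta)$ denote the best constant, uniform over the class $\Fc$ of admissible $\phi$ with $A_5,A_6\sim 1$. The engine of the induction is that $\Fc$ is essentially closed under parabolic rescaling: the parabolic rescaling associated with an arc $J_0\subset[0,1]$ of length $\sigma$ carries the graph of $\phi|_{J_0}$ to the graph of some $\widetilde\phi\in\Fc$ with constants $A_5',A_6'\sim 1$, and for arcs below the curvature scale $\sim A_6/A_5$ the higher Taylor coefficients of $\phi$ become negligible, so a routine perturbation argument identifies that piece with a genuine parabolic arc. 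In particular it suffices to prove the bound for $\phi(t)=t^2$, the reduction of a general $\phi\in\Fc$ to this case being a standard perturbation/rescaling argument.

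\textbf{The iteration and its analytic core.} Alongside $D(\delta)$ introduce a bilinear constant $D_{\mathrm{bil}}(\delta)$ controlling $\|g_1g_2\|_{L^3(B_N)}$ --- for arcs $\tau_1,\tau_2\subset[0,1]$ with $\dist(\tau_1,\tau_2)\gtrsim 1$ and $\widehat{g_i}$ supported over $\tau_i$ --- in terms of $\prod_{i=1,2}\big(\sum_{J\subset\tau_i,\,|J|=\delta}\|g_{i,J}\|_{L^6(B_N)}^2\big)^{1/2}$. Fix a large integer $K=K(\epsilon)$ and run the broad--narrow decomposition at scale $1/K$: pointwise on $B_N$, either $|\sum_\tau f_\tau|$ is controlled by $O(1)$ of the length-$1/K$ intervals $\tau$ (narrow), or $|\sum_\tau f_\tau|\lesssim K^{O(1)}\max|f_{\tau_1}|^{1/2}|f_{\tau_2}|^{1/2}$ over $\gtrsim 1/K$-separated pairs (broad). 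In the narrow case, rescale each relevant $\tau$ parabolically --- the scale drops to $\delta^{-2}K^{-2}$, so the decoupling parameter becomes $K\delta$ --- apply $D(K\delta)$, and recombine the $O(1)$ pieces trivially; the net cost is $K^{O(1)}$. In the broad case, invoke a rescaled $D_{\mathrm{bil}}$, which is itself controlled by $D$ together with the loss-free bilinear $L^4$ estimate via a simpler iteration. That $L^4$ estimate is C\'ordoba's orthogonality argument: $\|g_1g_2\|_{L^4(B_N)}^4=\|g_1^2g_2^2\|_{L^2}^2$; decomposing each $g_i^2$ over the $\delta$-caps of $\tau_i$, transversality of $\tau_1,\tau_2$ forces a frequency $(\xi+\xi')+(\eta+\eta')$ --- two caps from each $\tau_i$ --- to determine $\xi+\xi'$ and $\eta+\eta'$ separately up to the frequency resolution of $B_N$, after which the parabola's $L^4$ count ($\xi+\xi'$ has $O(1)$ cap-preimages) closes the bound; in $\R^2$ the `bilinear Kakeya' input needed in higher dimensions is vacuous, since transverse tubes meet in sets of size controlled by Fubini. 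Iterating the coupled recursion --- the Bourgain--Guth bootstrap, with $K$ chosen large in terms of $\epsilon$ and the $\epsilon$-losses bookkept so that they sum rather than compound --- drives the decoupling exponent to $0$ and yields $D(\delta)\lesssim_\epsilon\delta^{-\epsilon}$, i.e.\ $\lesssim_\epsilon N^\epsilon$.

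\textbf{Main obstacle.} The crux is passing from the loss-free bilinear $L^4$ bound to bilinear $L^6$ without losing a power of $N$. The exponent $6$ is critical for the parabola --- $\ell^2L^6$ decoupling genuinely fails with constant $O(1)$, indeed it forces at least a power of $\log N$, reflecting the near-diagonal solutions of $n_1+n_2+n_3=n_4+n_5+n_6$, $n_1^2+n_2^2+n_3^2=n_4^2+n_5^2+n_6^2$ --- so no one-shot argument can work and the scale induction, with its delicate accounting of $\epsilon$-losses, is forced. A secondary, purely technical, point is making parabolic rescaling of a general $C^3$ convex $\phi$ rigorous at every scale, including arcs below the curvature scale; this is routine, and it is precisely where the uniformity in $A_5,A_6$ is checked.
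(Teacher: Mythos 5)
The paper offers no proof of Theorem \ref{decpar} at all: it is quoted verbatim from \cite{BD3}, so the only ``in-paper approach'' is the citation. Your outline is, in substance, a faithful reconstruction of the known Bourgain--Demeter argument: normalize $\phi''\sim 1$ and exploit closure of the class under parabolic rescaling (note $\tilde\phi''=\phi''(a+\sigma\,\cdot)$ while $\|\tilde\phi'''\|_\infty=\sigma\|\phi'''\|_\infty$, which is exactly where the uniformity in $A_5,A_6$ lives and why small arcs become perturbed parabolas), run the Bourgain--Guth broad--narrow decomposition at scale $1/K$ with the narrow term recycled through $D(K\delta)$ and the broad term sent to a bilinear constant, use C\'ordoba's $L^4$ biorthogonality as the loss-free input (bilinear Kakeya is indeed vacuous in the plane), and close with a coupled induction on scales with $K=K(\epsilon)$ large so the losses sum rather than compound. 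Your side remarks are also correct: the $N^3\log N$ asymptotic for the sixth moment of quadratic Weyl sums does force at least a power of $\log$ in the $\ell^2L^6$ constant, so no single-scale argument can work at the critical exponent.

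The only place where the plan is thinner than a proof is the clause asserting that the bilinear $L^6$ constant is ``controlled by $D$ together with the loss-free bilinear $L^4$ estimate via a simpler iteration.'' That passage is the actual core of \cite{BD3}: one needs the multi-scale scheme ($L^2$ orthogonality on small balls, the $L^4$ input at intermediate scales, parabolic rescaling to re-enter the linear constant at coarser scales, and H\"older to interpolate), organized into a recursion of the shape $D(\delta)\lesssim_\epsilon \delta^{-\epsilon}D(\delta^{1/2})$ (or the equivalent tree/ball-inflation bookkeeping). As a blind sketch of the cited theorem your route is the right one and contains no incorrect step, but a complete write-up would have to supply that bilinear iteration explicitly rather than by appeal.
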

We use this result to illustrate in the simplest terms the reduction of cubic terms used in the next section for $\alpha=2$. Namely, let us show that for $1\le \alpha<3$
\begin{equation}
\label{uy4tc674t7cyrfu}
\int_{[0,1]^2}|\sum_{1\le m\le M}e(mu+(m^2+\frac{m^3}{M^\alpha})w)|^6dudw\lesssim_\epsilon M^{3+\epsilon}.
\end{equation}
The term $\frac{m^3}{M^\alpha}w$ is not negligible (in the sense of Lemma \ref{22}), as it is not $O(1)$.
However, after a change of variables and using periodicity in $u$, we may rewrite the integral as
$$\frac1{M^4}\int_{[0,M^2]^2}|\sum_{1\le m\le M}e(\frac{m}{M}u+\phi(\frac{m}{M})w)|^6dudw,$$
where $\phi(t)=t^2+t^3M^{1-\alpha}$. Note that $A_5,A_6\sim 1$, uniformly over $M$. Inequality \eqref{uy4tc674t7cyrfu} is now a  standard consequence of Theorem \ref{decpar} with $N=M^2$. The cubic term  becomes a perturbation of the quadratic term, and does not significantly affect the constant $A_6$. Theorem \ref{65} will formalize this approach in four dimensions.
\smallskip

Throughout this section, $\phi_3,\phi_4$ are arbitrary functions satisfying \eqref{1} and \eqref{2}.
We denote by $E$ the extension operator associated with the curve $\Phi$
\begin{equation}
\label{curve}
\Phi(t)=(t,t^2,\phi_3(t),\phi_4(t)),\;t\in[\frac12,1].
\end{equation}
 More precisely, for $f:[\frac12,1]\to \C$ and $I\subset [\frac12,1]$ we write
$$E_If(x)=\int_If(t)e(tx_1+t^2x_2+\phi_3(t)x_3+\phi_4(t)x_4)dt.$$
The following $l^6(L^6)$ decoupling was  proved in \cite{Bo}, see also \cite{BD13}. It is in fact a bilinear version of  the  $l^{12}L^{12}$ decoupling for the curve \eqref{curve}.
\begin{te}
\label{48}
Let $I_1,I_2$ be two intervals of length $\sim 1$ in $[\frac12,1]$, with $dist(I_1,I_2)\sim 1$. Let also $f_i:[\frac12,1]\to \C$. Then for each ball $B_N$ of radius $N$ in $\R^4$ we have
$$\|E_{I_1}f_1E_{I_2}f_2\|_{L^6(B_N)}\lesssim_\epsilon N^{\frac13+\epsilon}(\sum_{J_1\subset I_1}\sum_{J_2\subset I_2}\|E_{J_1}f_1E_{J_2}f_2\|_{L^6(B_N)}^6)^{1/6}.$$
The sum on the right is over intervals $J_i$ of length $N^{-1/2}$.
\end{te}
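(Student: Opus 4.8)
The plan is to obtain the estimate by combining the $\ell^2(L^6)$ decoupling of the parabola (Theorem \ref{decpar}) with the transversality that $\dist(I_1,I_2)\sim 1$ provides, in a bilinear fashion. The first step is a formal reduction: it is enough to prove the superficially stronger $\ell^2$-inequality
$$\|E_{I_1}f_1E_{I_2}f_2\|_{L^6(B_N)}\lesssim_\epsilon N^\epsilon\Big(\sum_{J_1\subset I_1}\sum_{J_2\subset I_2}\|E_{J_1}f_1E_{J_2}f_2\|_{L^6(B_N)}^2\Big)^{1/2},$$
because each of $I_1,I_2$ contains $\sim N^{1/2}$ subarcs of length $N^{-1/2}$, so there are $\sim N$ pairs $(J_1,J_2)$, and H\"older's inequality $\big(\sum_{J_1,J_2}a_{J_1,J_2}^2\big)^{1/2}\le N^{1/2-1/6}\big(\sum_{J_1,J_2}a_{J_1,J_2}^6\big)^{1/6}$ converts the $N^\epsilon$ into the asserted $N^{1/3+\epsilon}$. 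Thus the $N^{1/3}$ is nothing but the price of passing from an $\ell^2$- to an $\ell^6$-decoupling over $N$ caps, and all the real work is in the $\ell^2(L^6)$ statement with an $N^\epsilon$ loss.

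For the $\ell^2(L^6)$ inequality, the function $E_{I_1}f_1E_{I_2}f_2$ has Fourier support on the two-dimensional ``sum surface'' $\Phi(I_1)+\Phi(I_2)\subset\R^4$, and one wants to decouple it into the caps $\Phi(J_1)+\Phi(J_2)$. The mechanism is that, when $\dist(I_1,I_2)\sim 1$ and \eqref{2}, \eqref{3} hold, this surface carries two transverse families of ``parabolic'' curvature — one in the $t$-parameter, one in the $s$-parameter — so that the decoupling may be performed parameter by parameter, each step being (after the standard anisotropic rescaling that absorbs the higher Taylor terms of $\phi_3,\phi_4$ into small perturbations, exactly as in the warm-up \eqref{uy4tc674t7cyrfu}) an instance of the parabola decoupling of Theorem \ref{decpar} in a suitable pair of the four variables, with the remaining two variables carried along and the resulting fibrewise bound upgraded to a genuine $L^6(\R^4)$ bound by Minkowski's inequality (using $\|F\|_{L^6(\R^4)}^2=\big\|\,\|F\|_{L^6_{x_1,x_2}}^2\,\big\|_{L^3_{x_3,x_4}}$ and its permutations). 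Since Theorem \ref{decpar} is lossless and we are operating at the parabola's critical exponent $6$, the two steps compose with only an $N^\epsilon$ loss.

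The hard part is making the two-parameter decoupling step precise. The sum surface does not split as a product — the dependence on $t$ and on $s$ is intertwined across all four coordinates, and, unlike a product of two parabolas, the surface is even slightly degenerate at second order — so the two refinements cannot be carried out in genuine independence; the separation $\dist(I_1,I_2)\sim 1$ and the nondegeneracy hypotheses \eqref{1}, \eqref{2}, \eqref{3} are needed to pin down, at each scale and after the correct anisotropic rescaling, the two transverse parabolic structures and to check that the rescaled balls are large enough for Theorem \ref{decpar} to be nontrivial. This multiscale bookkeeping is where essentially all of the effort goes, and it is also where the bilinear formulation is essential: running the argument bilinearly is what makes $L^6$ — the parabola's critical exponent — the natural and efficient target, rather than some larger exponent, in line with the $12=6\times 2$ philosophy of the introduction.
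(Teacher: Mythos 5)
Your reduction of the stated $\ell^6(L^6)$ bound to a bilinear $\ell^2(L^6)$ decoupling with only an $N^\epsilon$ loss is correct, and it is exactly the paper's point of view: there are $\sim N^{1/2}\times N^{1/2}=N$ pairs $(J_1,J_2)$, and H\"older supplies the factor $N^{1/2-1/6}=N^{1/3}$; in the paper this is the remark that Theorem \ref{49} implies Theorem \ref{48} (the latter being quoted from \cite{Bo}, \cite{BD13}). So the entire substance of your proposal is the proof of the $\ell^2(L^6)$ inequality, and there you have a genuine gap: you name the ingredients (Theorem \ref{decpar} applied in two pairs of variables with Minkowski, transversality, Taylor expansion), but you explicitly defer ``making the two-parameter decoupling step precise,'' and that deferred step is essentially the whole proof.

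Concretely, the parameter-by-parameter application of Theorem \ref{decpar} needs the Fourier support to be (an $O(N^{-1})$-perturbation of) a split surface $(\xi_1,\xi_2,\psi_1(\xi_1)+\psi_2(\xi_2),\psi_3(\xi_1)+\psi_4(\xi_2))$ as in Theorem \ref{65}. Over the full intervals $I_1,I_2$ the sum surface $\Phi(t)+\Phi(s)$ has no such split form in any fixed linear coordinates, and the cubic and higher Taylor terms of $\phi_3,\phi_4$ are of unit size, hence of size $\sim N$ once paired with $|x_3|,|x_4|\lesssim N$; they are not perturbations ``exactly as in the warm-up'' \eqref{uy4tc674t7cyrfu}, where the smallness came from the specific coefficient $M^{1-\alpha}$. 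The paper's mechanism for overcoming this is an induction on scales which your sketch does not supply: with $b(N)$ the best $\ell^2(L^6)$ constant, cover $B_N$ by balls of radius $N^{2/3}$ and apply $b(N^{2/3})$ to pass to arcs $H_1,H_2$ of length $N^{-1/3}$; on such arcs the Taylor remainders are $O(1/N)$, and the linear change of variables of Lemma \ref{18} (this is precisely where $\dist(I_1,I_2)\sim 1$ and \eqref{2} enter) rewrites $|E_{H_1}f_1E_{H_2}f_2|$ as the extension of an $O(1/N)$-perturbation of $(s_1,s_1^2,s_2,s_2^2)$, which decouples into $N^{-1/2}$-arcs by two applications of Theorem \ref{decpar}; this yields the bootstrap $b(N)\lesssim_\epsilon N^\epsilon b(N^{2/3})$ and hence $b(N)\lesssim_\epsilon N^\epsilon$. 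Some such scale iteration, together with the explicit change of variables that produces the split model, is indispensable; as written, your outline identifies the right tools and the right obstruction but does not prove the $\ell^2(L^6)$ inequality, and therefore does not yet prove the theorem.
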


In \cite{Bo}, this result is used in conjunction with the following estimate, an easy  consequence of transversality.
\begin{equation}
\label{52}
\|E_{J_1}f_1E_{J_2}f_2\|_{L^6(B_N)}^6\lesssim N^{-4}\|E_{J_1}f_1\|_{L^6(B_N)}^6\|E_{J_2}f_2\|_{L^6(B_N)}^6.
\end{equation}
This is inequality (13) in \cite{BD13}, and there is a detailed proof there. For reader's convenience, we sketch a somewhat informal  argument below.

The Fourier transform of $|E_{J_i}f_i|^6$ is supported inside a rectangular box with dimensions $\sim N^{-1}\times N^{-1}\times N^{-1}\times N^{-1/2}$.
We have the following wavepacket representation on $B_N$, slightly simplified for exposition purposes
$$|E_{J_i}f_i(x)|^6\approx \sum_{P_i\in\Pc_i}a_{P_i}1_{P_i}(x).$$
The coefficients $a_{P_i}$ are nonnegative reals.
The rectangular boxes $P_i$ have dimensions $\sim N\times N\times N\times N^{1/2}$ and tile $B_N$. They can be thought of as $N^{1/2}$-neighborhoods of cubes with diameter $\sim N$ inside  hyperplanes $\Hc_i$.
Since $\dist(J_1,J_2)\sim 1$, the angle between the normal vectors of $\Hc_i$ is $\sim 1$. Thus,  we have that $|P_1\cap P_2|\lesssim N^3$. We conclude by writing
\begin{align*}
\|E_{J_1}f_1E_{J_2}f_2\|_{L^6(B_N)}^6&\sim \sum_{P_1\in\Pc_1}\sum_{P_2\in\Pc_2}a_{P_1}a_{P_2}|P_1\cap P_2| \\&\lesssim N^{-4}\sum_{P_1\in\Pc_1}\sum_{P_2\in\Pc_2}a_{P_1}a_{P_2}|P_1||P_2| \\&\approx N^{-4}\|E_{J_1}f_1\|_{L^6(B_N)}^6\|E_{J_2}f_2\|_{L^6(B_N)}^6.
\end{align*}

It is worth observing that $\lesssim $ in inequality \eqref{52} is essentially an (approximate) similarity $\approx$,  making \eqref{52} extremely efficient. Indeed, since $P_1$ and $P_2$ intersect $B_N$, we have that $|P_1\cap P_2|\sim N^3$.
\medskip

To address new values of $\alpha$ in this paper, we will need the following $l^2(L^6)$ decoupling. This implies the previous $l^6(L^6)$ decoupling, and provides a critical improvement in the cases when the terms in the sum are of significantly different sizes.
\begin{te}
	\label{49}
	Let $I_1,I_2$ be two intervals of length $\sim 1$ in $[\frac12,1]$, with $dist(I_1,I_2)\sim 1$. Let also $f_i:[\frac12,1]\to \C$. Then for each ball $B_N$ of radius $N$ in $\R^4$ we have
	$$\|E_{I_1}f_1E_{I_2}f_2\|_{L^6(B_N)}\lesssim_\epsilon N^{\epsilon}(\sum_{J_1\subset I_1}\sum_{J_2\subset I_2}\|E_{J_1}f_1E_{J_2}f_2\|_{L^6(B_N)}^2)^{1/2}.$$
	The sum on the right is over intervals $J$ of length $N^{-1/2}$.
\end{te}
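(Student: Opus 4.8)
The plan is to deduce the inequality from the critical $L^6$ decoupling of the parabola, Theorem~\ref{decpar}, by decoupling the two transverse factors one variable at a time. First I would record the underlying geometry. The piece $E_{J_1}f_1\,E_{J_2}f_2$ has Fourier support in $\Phi(J_1)+\Phi(J_2)$, and since $\dist(I_1,I_2)\sim 1$ and $\phi_3,\phi_4$ satisfy \eqref{1}--\eqref{3}, the four vectors $\Phi'(c_1),\Phi''(c_1),\Phi'(c_2),\Phi''(c_2)$ span $\R^4$ whenever $c_1\in I_1$, $c_2\in I_2$; consequently $\Phi(J_1)+\Phi(J_2)$ is an affine image of an $N^{-1/2}\times N^{-1/2}\times N^{-1}\times N^{-1}$ box, and as $J_1,J_2$ range over the arcs of length $N^{-1/2}$ these boxes tile, with $O(1)$ overlap, the $N^{-1}$-neighborhood of the translation surface $S=\Phi(I_1)+\Phi(I_2)$. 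Locally each such box is a product of two parabola-arc caps, one in each variable, sitting in transverse osculating $2$-planes. The decisive structural feature — and the reason bilinearizing is what makes this work — is that $S$ is genuinely curved in $\R^4$ (the curvature directions $\Phi''(t),\Phi''(s)$ are transverse to the tangent plane $\operatorname{span}(\Phi'(t),\Phi'(s))$), whereas the analogous Minkowski sum for a curve in $\R^2$ would be a flat planar region, for which no such decoupling is true.

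For the one-variable step I claim that, for fixed $f_2$,
$$\|E_{I_1}f_1\,E_{I_2}f_2\|_{L^6(B_N)}\lesssim_\epsilon N^\epsilon\Big(\sum_{J_1\subset I_1}\|E_{J_1}f_1\,E_{I_2}f_2\|_{L^6(B_N)}^2\Big)^{1/2}.$$
On a ball of radius $N$ and over an arc of length $N^{-1/2}$, the cubic and higher Taylor terms of $\Phi$ (present only in the $\phi_3,\phi_4$ coordinates, and then of size $O(N^{-1/2})$ in phase) are negligible; so after the rescaling/perturbation reduction that strips them off — the four-dimensional analogue of the cubic reduction illustrated just after Theorem~\ref{decpar}, formalized in Theorem~\ref{65} — the relevant geometry in the $t$-variable is that of the parabola in the rotating osculating $2$-plane, with $E_{I_2}f_2$ entering only as an inert factor whose frequency support $\Phi(I_2)$ is transverse to the arcs $\Phi(J_1)$. (It is exactly this transversality, available in $\R^4$ but not in $\R^2$, that prevents the boxes $\Phi(J_1)+\Phi(J_2)$ from fattening in their thin directions.) The displayed inequality then follows from a cylindrical version of Theorem~\ref{decpar} — parabola $L^6$ decoupling in the $(\Phi'(c_1),\Phi''(c_1))$-plane, all remaining directions (including those carrying $E_{I_2}f_2$) being passive — which I would obtain by rerunning the proof of that theorem. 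Because $6$ is the critical exponent for the parabola, this application is lossless: it costs only $N^\epsilon$.

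I would then apply the same step in the $s$-variable, now with $E_{J_1}f_1$ in the role of the passive transverse factor (legitimate by the same transversality), to get $\|E_{J_1}f_1\,E_{I_2}f_2\|_{L^6(B_N)}\lesssim_\epsilon N^\epsilon\big(\sum_{J_2}\|E_{J_1}f_1\,E_{J_2}f_2\|_{L^6(B_N)}^2\big)^{1/2}$ for each $J_1$; inserting this into the displayed inequality and using $\sum_{J_1}\sum_{J_2}=\sum_{J_1,J_2}$ completes the proof. The main obstacle is precisely the cylindrical (lifted) parabola decoupling invoked in each step — decoupling in one variable while a transverse factor is carried along passively, at only $N^\epsilon$ loss — together with the quantitative bookkeeping in the perturbative reduction (Theorem~\ref{65}) that licenses the passage to the flat parabola on $B_N$. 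For contrast, the cheap route fails: bounding $\|E_{I_1}f_1\,E_{I_2}f_2\|_{L^6}\le\|E_{I_1}f_1\|_{L^{12}}\|E_{I_2}f_2\|_{L^{12}}$, applying linear $l^2(L^{12})$ decoupling to each factor, and reversing via \eqref{52}, loses a factor $N^{1/12}$, because on $B_N$ the quantity $\|E_Jf\|_{L^{12}}$ overshoots the genuine geometric size of $E_Jf$ (its $l^6$-norm over the plates on which it is roughly constant); the transversality therefore has to be kept throughout the decoupling rather than spent on Hölder's inequality at the very start.
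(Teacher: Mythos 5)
Your geometric description of the sum surface $\Phi(I_1)+\Phi(I_2)$ is accurate, but the core step of your argument --- the one-variable decoupling at unit scale with the other factor carried along ``passively'' --- is not justified, and it is precisely the step the paper's proof is built to circumvent. A cylindrical version of Theorem \ref{decpar} decouples a function whose Fourier support lies, in some \emph{fixed} coordinates, within $O(N^{-1})$ of the cylinder over a parabola, into slabs of width $N^{-1/2}$ in the corresponding fixed direction. The product $E_{I_1}f_1\,E_{I_2}f_2$ has no such structure: multiplication by $E_{I_2}f_2$ translates frequencies by the set $\Phi(I_2)$, which has extent $\sim 1$ in every coordinate (in particular in any plane $\operatorname{span}(\Phi'(c_1),\Phi''(c_1))$, or in the $(\xi_1,\xi_2)$-plane where the first factor projects to the parabola). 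Consequently the sets $\Phi(J_1)+\Phi(I_2)$ are not separated slabs in any fixed $2$-plane --- after this $O(1)$ smearing they essentially all overlap --- so there is no cylinder over a parabola to which Theorem \ref{decpar} can be applied with $E_{I_2}f_2$ inert; transversality of $\Phi(I_2)$ to the arcs $\Phi(J_1)$ is not the relevant hypothesis. Likewise, the negligibility of cubic terms you invoke concerns the target arcs $J_1$; the decoupling step itself operates on all of $I_1$, where the cubic and quartic parts of $\phi_3,\phi_4$ contribute phases of size $\sim N$ on $B_N$ and cannot be stripped off. Your citation of Theorem \ref{65} as the formalization is also misplaced: that theorem concerns the split surfaces $(\xi_1,\xi_2,\psi_1(\xi_1)+\psi_2(\xi_2),\psi_3(\xi_1)+\psi_4(\xi_2))$, which only arise \emph{after} a localization and change of variables, and it is itself proved by the very mechanism your proposal lacks.

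That mechanism is a multiscale bootstrap combined with a pair-dependent affine change of variables. In the paper one lets $b(N)$ be the best constant in the claimed inequality, covers $B_N$ by balls of radius $N^{2/3}$, and applies the inequality at that scale to reach intervals $H_1,H_2$ of length $N^{-1/3}$; only at this shorter scale are the Taylor remainders $O(N^{-1})$ in frequency, and only after the change of variables of Lemma \ref{18} (which uses \eqref{2}) does the piece of the surface over $H_1\times H_2$ take the split form above, up to errors invisible on $B_N$. At that point one may legitimately decouple one variable at a time by Theorem \ref{decpar} --- this is where your ``passive factor'' intuition is correct, but only in the new, pair-dependent coordinates and at the short scale. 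Undoing the change of variables and summing over the pairs $(H_1,H_2)$ yields $b(N)\lesssim_\epsilon N^\epsilon\, b(N^{2/3})$, hence $b(N)\lesssim_\epsilon N^\epsilon$. Without the bootstrap and this change of variables, your two-step cylindrical argument does not go through as written.
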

\begin{proof}
We briefly sketch the argument, that follows closely the one in \cite{Bo}. Let $b(N)$ be the best constant such that
	$$\|E_{I_1}f_1E_{I_2}f_2\|_{L^6(B_N)}\le b(N)(\sum_{J_1\subset I_1}\sum_{J_2\subset I_2}\|E_{J_1}f_1E_{J_2}f_2\|_{L^6(B_N)}^2)^{1/2}$$
	holds for all functions and balls as above. Fix $B_N$ and let $\Bc$ be a finitely overlapping cover of $B_N$ with balls $\Delta$ of radius $N^{2/3}$. It will soon become clear that the exponent $2/3$ is chosen in order to make cubic terms negligible.  Applying this inequality on each $\Delta$, then summing up using Minkowski's inequality shows that
	$$\|E_{I_1}f_1E_{I_2}f_2\|_{L^6(B_N)}\le b(N^{2/3})(\sum_{H_1\subset I_1}\sum_{H_2\subset I_2}\|E_{H_1}f_1E_{H_2}f_2\|_{L^6(B_N)}^2)^{1/2}.$$
	The intervals $H$ have length $N^{-1/3}$. We next analyze each term in the sum. Let $l_i$ be the left endpoint of $H_i$, and write a generic point  $t_i\in H_i$ as $t_i=l_i+s_i$, with $s_i\in [0,N^{-1/3}]$. We use  Taylor's formula for $k\in\{3,4\}$
	$$\phi_k(t_i)=\phi_k(l_i)+\phi_k'(l_i)s_i+\frac{\phi_{k}''(l_i)}{2}s_i^2+\psi_{k,i}(s_i),$$
	where, due to \eqref{1},  $\|\psi_{k,i}\|_{L^{\infty}([0,N^{-1/3}])}=O(\frac1N)$.
Let us write
	$$\begin{cases}y_1=x_1+2l_1x_2+\phi_3'(l_1)x_3+\phi_4'(l_1)x_4\\y_2=x_1+2l_2x_2+\phi_3'(l_2)x_3+\phi_4'(l_2)x_4\\y_3=x_2+\frac{\phi_3''(l_1)}{2}x_3+\frac{\phi_4''(l_1)}{2}x_4 \\y_4=x_2+\frac{\phi_3''(l_2)}{2}x_3+\frac{\phi_4''(l_2)}{2}x_4
	\end{cases}.
	$$	
	It follows that
	$$|E_{H_1}f_1(x)E_{H_2}f_2(x)|=$$$$|\int_{[0,N^{-1/3}]^2}f_1(l_1+s_1)e(s_1y_1+s_1^2y_3)f_2(l_2+s_2)e(s_2y_2+s_2^2y_4)e(L(y,s_1,s_2))ds_1ds_2|.$$
	Here
	$$L(y,s_1,s_2)= x_3(\psi_{3,1}(s_1)+\psi_{3,2}(s_2))+x_4(\psi_{4,1}(s_1)+\psi_{4,2}(s_2)).$$
	Lemma \ref{18} shows that $x_3,x_4$ depend linearly on $y_1,y_2,y_3,y_4$ with coefficients $O(1)$. This allows us to  write
	$$e(L(y,s_1,s_2))=e(\sum_{i=1}^4y_i(g_i(s_1)+h_i(s_2)))$$
	with $\|g_i\|_{\infty},\|h_i\|_{\infty}=O(\frac1N)$. Letting
	 $\bar{f}_i(s_i)=f_i(l_i+s_i)$ and
	 $$\begin{cases}\eta_1(s_1,s_2)=s_1+g_1(s_1)+h_1(s_2)\\ \eta_3(s_1,s_2)=s_1^2+g_3(s_1)+h_3(s_2)\\\eta_2(s_1,s_2)=s_2+g_2(s_1)+h_2(s_2)\\ \eta_4(s_1,s_2)=s_2^2+g_4(s_1)+h_4(s_2)\end{cases}$$
	 we write
	$$|E_{H_1}f_1(x)E_{H_2}f_2(x)|=$$$$|\int_{[0,N^{-1/3}]^2}\bar{f}_1(s_1)e(y_1\eta_1(s_1,s_2)+y_3\eta_{3}(s_1,s_2))\bar{f}_2(s_2)e(y_2\eta_2(s_1,s_2)+y_4\eta_4(s_1,s_2))ds_1ds_2|.$$
	For $\bar{J}_i\subset [0,N^{-1/3}]$ we write
	$$\Ic_{\bar{J}_1,\bar{J}_2}(y)=$$$$|\int_{\bar{J}_1\times \bar{J}_2}\bar{f}_1(s_1)e(y_1\eta_1(s_1,s_2)+y_3\eta_{3}(s_1,s_2))\bar{f}_2(s_2)e(y_2\eta_2(s_1,s_2)+y_4\eta_4(s_1,s_2))ds_1ds_2|.$$
	This is the extension operator associated with the surface $(\eta_1,\ldots,\eta_4)$, applied to the function $f_1\otimes f_2$.
	
	We use Lemma \ref{18} to write
	$$\int_{B_N}|E_{J_1}f_1(x)E_{J_2}f_2(x)|^6dx=\int_{\bar{B}_N}\Ic_{\bar{J}_1, \bar{J}_2}(y)^6dy.$$
	Here $\bar{B}_N$ is a ball of radius $\sim N$ and $J_i=\bar{J}_i+l_i$.
	Note that the surface $(\eta_1,\ldots,\eta_4)$ is within $O(N^{-1})$ from the surface
	$$(s_1,s_1^2,s_2,s_2^2),\;\;s_i\in [0,N^{-1/3}],$$
	so -for decoupling purposes- the two surfaces are indistinguishable when paired with spatial variables $y$ ranging through a ball of radius $N$. The latter surface admits an $l^2(L^6)$ decoupling, as can be easily seen by using Theorem \ref{decpar} twice. The same remains true for the surface  $(\eta_1,\ldots,\eta_4)$, and thus
	$$\|\Ic_{[0,N^{-1/3}],[0,N^{-1/3}]}\|_{L^6({\bar{B}_N})}\lesssim_\epsilon
	N^{\epsilon}(\sum_{{\bar{J}_1,\bar{J}_2}\subset [0,N^{-1/3}]}\|\Ic_{\bar{J}_1,\bar{J}_2}\|^2_{L^6({\bar{B}_N})})^{1/2}.$$
	The sum on the right is over intervals of length $N^{-1/2}$.
	If we undo the change of variables we find
	
$$\|E_{H_1}f_1E_{H_2}f_2\|_{L^6(B_N)}\lesssim_{\epsilon}N^{\epsilon}(\sum_{J_1\subset H_1}\sum_{J_2\subset H_2}\|E_{J_1}f_1E_{J_2}f_2\|^2_{L^6(B_N)})^{1/2}.$$
Putting things together, we have proved the bootstrapping inequality
$$b(N)\lesssim_\epsilon N^{\epsilon}b(N^{2/3}).$$
We conclude that $b(N)\lesssim_\epsilon N^{\epsilon}$, as desired.	

\end{proof}

We will also record the following close relative of Theorem \ref{49}, that will be needed in the next sections.

\begin{te}
\label{65}	
Assume $\psi_1,\ldots,\psi_4:[-1,1]\to\R$ have $C^3$ norm $O(1)$, and in addition satisfy
$$|\psi_2''(t)|,|\psi_3''(t)|\ll 1,\; \forall\; |t|\le 1$$
and
$$|\psi_1''(t)|,|\psi_4''(t)|\sim 1,\; \forall\; |t|\le 1.$$
 Let $E$ be the extension operator associated with the surface
$$\Psi(\xi_1,\xi_2)=(\xi_1,\xi_2,\psi_1(\xi_1)+\psi_2(\xi_2),\psi_3(\xi_1)+\psi_4(\xi_2)),\;\;|\xi_1|,|\xi_2|\le 1.$$
More precisely, for $F:[-1,1]^2\to\C$, $R\subset [-1,1]^2$ and $x\in \R^4$ we write
$$E_RF(x)=\int_{R}F(\xi_1,\xi_2)e(x\cdot \Psi(\xi_1,\xi_2))d\xi_1d\xi_2.$$
Then for each ball $B_N\subset \R^4$ with radius $N$ we have
\begin{equation}
\label{64}
\|E_{[-1,1]^2}F\|_{L^6(B_N)}\lesssim_\epsilon N^{\epsilon}(\sum_{H_1,H_2\subset [-1,1]}\|E_{H_1\times H_2}F\|^2_{L^6(B_N)})^{1/2},
\end{equation}
where the sum is taken over intervals of length $N^{-1/2}$.

In particular, for each constant coefficients $c_{m_1,m_2}\in\C$ we have
\begin{equation}
\label{59}
\|\sum_{m_1\le N^{1/2}}\sum_{m_2\le N^{1/2}}c_{m_1,m_2}e(x\cdot \Psi(\frac{m_1}{N^{1/2}},\frac{m_2}{N^{1/2}}))\|_{L^6(B_N)}\lesssim_{\epsilon}N^\epsilon \|c_{m_1,m_2}\|_{l^2}|B_N|^{1/6},
\end{equation}
while if $M\ge N^{1/2}$ we have
\begin{equation}
\label{80}
\|\sum_{m_1\le M}\sum_{m_2\le M}c_{m_1,m_2}e(x\cdot \Psi(\frac{m_1}{M},\frac{m_2}{M}))\|_{L^6(B_N)}
\end{equation}
$$\lesssim_{\epsilon}N^\epsilon (\sum_{J_1,J_2}\|\sum_{m_1\in J_1}\sum_{m_2\in J_2}c_{m_1,m_2}e(x\cdot \Psi(\frac{m_1}{M},\frac{m_2}{M}))\|^2_{L^6(B_N)})^{1/2},$$
with $J_i$ intervals of length $\sim MN^{-1/2}$ partitioning $[1,M]$.

The implicit constants in both inequalities are independent of $N$, $M$ and of $\psi_i$.
\end{te}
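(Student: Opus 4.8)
The plan is to prove the decoupling inequality \eqref{64} first, and then read off \eqref{59} and \eqref{80} from it by a routine discretization.

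For \eqref{64} I would mimic the bootstrapping argument in the proof of Theorem \ref{49}. Let $b(\rho)$ be the best constant for which the analogue of \eqref{64} holds over all balls of radius $\rho$, all $F$, and all surfaces $\Psi$ subject to the stated hypotheses; the goal is $b(N)\lesssim_\epsilon N^\epsilon$. Cover $B_N$ by finitely overlapping balls $\Delta$ of radius $N^{2/3}$; applying the definition of $b(N^{2/3})$ on each $\Delta$ and summing via Minkowski decouples $[-1,1]^2$ into squares $H=H_1\times H_2$ of side $N^{-1/3}$, with constant $b(N^{2/3})$. Then I would fix such an $H$, rescale $\xi_i=c_i+N^{-1/3}\eta_i$ around its center $c$, rescale the dual variables accordingly, and discard the constant and linear parts of the $\psi_k$ by a modulation and a shearing of the $x$-variables. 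By Taylor's theorem the super-quadratic parts of the $\psi_k$ contribute only $O(N^{-1})$ on $H$, and — exactly as in the proof of Theorem \ref{49} — such terms are negligible for decoupling purposes against spatial variables ranging through a ball of radius $N$. This reduces matters to an $l^2(L^6)$ decoupling, into $N^{-1/2}$-squares and over balls of radius $\sim N$, for a \emph{quadratic model}
$$(\eta_1,\eta_2)\longmapsto(\eta_1,\eta_2,\;a\eta_1^2+b\eta_2^2,\;c\eta_1^2+d\eta_2^2),\qquad|\eta_1|,|\eta_2|\le1,$$
where, after renormalizing the third and fourth spatial coordinates, $a,d\sim1$ and $b,c\ll1$; in particular $|ad-bc|\sim1$.

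The crux is that this quadratic model is linearly equivalent to the \emph{tensor model} $(\eta_1,\eta_2)\mapsto(\eta_1,\eta_2,\eta_1^2,\eta_2^2)$. Indeed $x_1\eta_1+x_2\eta_2+x_3(a\eta_1^2+b\eta_2^2)+x_4(c\eta_1^2+d\eta_2^2)=x_1\eta_1+x_2\eta_2+(ax_3+cx_4)\eta_1^2+(bx_3+dx_4)\eta_2^2$, so the linear change $(x_3,x_4)\mapsto(ax_3+cx_4,\,bx_3+dx_4)$ — which has $O(1)$ entries and determinant $ad-bc\sim1$, hence distorts $B_N$ by a bounded factor — turns the quadratic model into the tensor model. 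This is precisely where the curvature hypotheses $|\psi_1''|,|\psi_4''|\sim1$ and $|\psi_2''|,|\psi_3''|\ll1$ enter: they are what forces $|ad-bc|\sim1$. For the tensor model, $l^2(L^6)$ decoupling is just Theorem \ref{decpar} applied twice: for each fixed $(x_2,x_4)$ the map $\eta_1\mapsto\int F(\eta_1,\eta_2)e(x_2\eta_2+x_4\eta_2^2)\,d\eta_2$ is a genuine function of $\eta_1$, so Theorem \ref{decpar} in the $(x_1,x_3)$-variables decouples in $\eta_1$ into $N^{-1/2}$-arcs; raising to the sixth power, integrating over $(x_2,x_4)$ in a box of side $\sim N$, and invoking Minkowski (valid since $6\ge2$) gives the decoupling into strips $J_1\times[-1,1]$, and repeating in $\eta_2$ on each strip completes the decoupling into $N^{-1/2}$-squares. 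Undoing the rescalings yields $b(N)\lesssim_\epsilon N^\epsilon b(N^{2/3})$, which iterates to $b(N)\lesssim_\epsilon N^\epsilon$.

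Finally, \eqref{59} and \eqref{80} follow from \eqref{64} by the standard passage from decoupling to discrete estimates: one writes each exponential sum as $E_{[-1,1]^2}F$ for an $F$ essentially constant on each $N^{-1/2}$-square. When $M=N^{1/2}$ each such square carries $O(1)$ of the points $\Psi(m_1/N^{1/2},m_2/N^{1/2})$, so the right side of \eqref{64} collapses to $\lesssim\|c_{m_1,m_2}\|_{l^2}$ times the $L^6(B_N)$ norm of a single exponential, namely $|B_N|^{1/6}$, which is \eqref{59}; when $M\ge N^{1/2}$ the points $m_i/M$ are $M^{-1}$-separated with $M^{-1}\le N^{-1/2}$, and the intervals $J_i$ of length $\sim MN^{-1/2}$ are exactly the preimages of $N^{-1/2}$-intervals under $m_i\mapsto m_i/M$, so \eqref{64} becomes \eqref{80}. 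I expect the main obstacle to be the bookkeeping in the bootstrap step — controlling the super-quadratic Taylor corrections across scales and checking that, after rescaling, the relevant curves still meet the hypotheses of Theorem \ref{decpar} with uniform constants — handled exactly as in the proof of Theorem \ref{49}; by contrast the algebraic heart, the reduction of the quadratic model to the tensor model via a bounded linear change, is elementary once one notes that the curvature assumptions give $|ad-bc|\sim1$.
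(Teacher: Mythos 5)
Your proposal is correct and follows essentially the same route as the paper: the same bootstrap $b(N)\lesssim_\epsilon N^\epsilon b(N^{2/3})$ via balls of radius $N^{2/3}$, Taylor expansion on $N^{-1/3}$-squares with $O(1/N)$ errors that are invisible at spatial scale $N$, reduction of the resulting quadratic model to the tensor model $(\eta_1,\eta_2,\eta_1^2,\eta_2^2)$ by a bounded linear change of the last two spatial coordinates (the paper's ``initial affine change of variables''), two applications of Theorem \ref{decpar} via Fubini--Minkowski, and the standard discretization to get \eqref{59} and \eqref{80}. The only quibble is bookkeeping: after your rescaling $\xi_i=c_i+N^{-1/3}\eta_i$ the target caps are $N^{-1/6}$-squares in $\eta$ over balls of radius $N^{1/3}$ (not $N^{-1/2}$-squares over $B_N$), a harmless slip the paper sidesteps by not renormalizing the frequency variables at all.
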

\begin{proof}
The exponential sum estimates \eqref{59}, \eqref{80} are standard consequences of \eqref{64}, so we will focus on proving the latter. 	
When $$\Psi(\xi_1,\xi_2)=(\xi_1,\xi_2,C_1\xi_1^2+C_2\xi_2^2,C_3\xi_1^2+C_4\xi_2^2)$$ with $|C_1|,|C_4|\sim 1$ and $|C_2|,|C_3|\ll 1$, the result follows by applying $l^2(L^6)$ Theorem \ref{decpar} twice (after an initial affine change of variables that reduces it to the case $C_1=C_4=1$, $C_2=C_3=0$).

We use a bootstrapping argument similar to the one in Theorem \ref{49}. Let $d(N)$	be the smallest constant in \eqref{64}. We need to prove that $d(N)\lesssim_\epsilon N^\epsilon$.

We first note that
$$\|E_{[-1,1]^2}F\|_{L^6(B_N)}\le d(N^{2/3})(\sum_{U_1,U_2\subset [-1,1]}\|E_{U_1\times U_2}F\|^2_{L^6(B_N)})^{1/2},$$
where $U_i$ are intervals of length $N^{-1/3}$ centered at $l_i$. When $|t|<\frac12N^{-1/3}$ we have
$$\psi_i(t)=\psi_i(l_i)+\psi_i'(l_i)t+C_it^2+O(\frac1N),$$
where $|C_1|,|C_4|\sim 1$ and $|C_2|,|C_3|\ll 1$. It follows that, after an affine change of variables, the restriction of $\Psi$ to $U_1\times U_2$ may be parametrized as
$$(\xi_1,\xi_2,C_1\xi_1^2+C_2\xi_2^2+O(\frac1N),C_3\xi_1^2+C_4\xi_2^2+O(\frac1N)),\;\;|\xi_1|,|\xi_2|=O(N^{-1/3}).$$
We decouple this on $B_N$, using the observation at the beginning of the proof. We find
$$\|E_{U_1\times U_2}F\|_{L^6(B_N)}\lesssim_{\epsilon}N^\epsilon (\sum_{H_1\subset U_1,H_2\subset U_2}\|E_{H_1\times H_2}F\|^2_{L^6(B_N)})^{1/2}.$$
It follows that $d(N)\lesssim_\epsilon N^\epsilon d(N^{2/3})$, which forces $d(N)\lesssim_\epsilon N^\epsilon$.

\end{proof}

\section{Bourgain's argument for the case $\alpha=2$}
\label{Bsarg}
For the remainder of the paper, we will use the following notation

$$\Ec_{I}(x)=\sum_{n\in I}e(N\Phi(\frac{n}{N})x)=\sum_{n\in I}e(nx_1+\frac{n^2}{N}x_2+\phi_3(\frac{n}{N})Nx_3+\phi_4(\frac{n}{N})Nx_4).$$
Note that, compared to $\Ec_{I,N}$, we dropped the subscript $N$ and renormalized the variables $x_2$, $x_3$ and $x_4$.
Letting $$\Omega=[0,N]^3\times[0,1]$$
and using periodicity in $x_1$,
we need to prove that
$$\int_{\Omega}|\Ec_{I_1}\Ec_{I_2}|^6\lesssim_\epsilon N^{9+\epsilon}.$$

Bourgain's argument from \cite{Bo} for the case $\alpha=2$ of Theorem \ref{4} involves three successive decouplings. We simplify it slightly it and reduce it to only two decouplings.
\\
\\
Step 1. Cover $\Omega$ with cubes $B$ of side length $1$,  apply $l^6(L^6)$  decoupling (Theorem \ref{48})  on each $B$ (or rather $NB$, after rescaling), then sum these estimates to get
$$\int_{\Omega}|\Ec_{I_1}\Ec_{I_2}|^6\lesssim_\epsilon N^{2+\epsilon}\sum_{J_1\subset I_1}\sum_{J_2\subset I_2}\sum_{B\subset \Omega}\int_{B}|\Ec_{J_1}\Ec_{J_2}|^6.$$
Here $J_1,J_2$ are intervals of length $N^{1/2}$.

The remaining part of the argument will show the uniform estimate $O(N^{6+\epsilon})$ for each term  $\sum_{B\subset \Omega}\int_{B}|\Ec_{J_1}\Ec_{J_2}|^6.$ Fix $J_i=[h_i,h_i+N^{1/2}]$.
\\
\\
Step 2. Note that when $x\in \Omega$
$$|\Ec_{J_i}(x)|=|\sum_{m\le N^{1/2}}c_m(x_4)e(mu_i+m^2w_i+\eta_i(m)x_3)|$$
where
\begin{equation}
\label{57}\begin{cases}u_i=x_1+\frac{2h_i}{N}x_2+\phi_3'(\frac{h_i}{N})x_3\\w_i=\frac{x_2}{N}+\phi_3''(\frac{h_i}{N})\frac{x_3}{2N}\\\eta_i(m)={m^3}\frac{\phi_3'''(\frac{h_i}{N})}{3!N^2}+{m^4}\frac{\phi_3''''(\frac{h_i}{N})}{4!N^3}+\ldots\end{cases}.
\end{equation}
We hide  the whole contribution from $x_4$ into the coefficient $c_{m}(x_4)$.
Indeed, since
$$\phi_4(\frac{h_i+m}{N})Nx_4=\phi_4(\frac{h_i}{N})Nx_4+m\phi_4'(\frac{h_i}{N})x_4+O(1),$$
$x_4$  does not contribute significantly with quadratic or higher order terms, so it produces no cancellations.  We will only use that $c_{m}(x_4)=O(1)$.

At this point, we  seek a change of variables. We want the new domain of integration to be a rectangular box, to allow us to separate the four-variable integral $\int_{\Omega}|\Ec_{J_1}\Ec_{J_2}|^6$ into the product of two-variable integrals. Note that the ranges of $x_1,x_2,x_3$ are the same, $[0,N]$, but $x_4$ is restricted to the smaller interval $[0,1]$. We cannot use periodicity to extend the range of $x_4$ to $[0,N]$, because the individual waves $e(m\phi_4'(\frac{h_i}{N})x_4)$ have different periods with respect to the variable $x_4$.
Because of this, the variable $x_4$ is practically useless from this point on, it will not generate oscillations. To generate a fourth variable with range $[0,N]$ for the purpose of a change of coordinates, Bourgain produces a piece of magic.

First, he applies \eqref{52} on each cube $NB$
\begin{align*}
\int_{B}|\Ec_{J_1}\Ec_{J_2}|^6&=N^{-4}\int_{NB}|\Ec_{J_1}(\frac{\cdot}{N})\Ec_{J_2}(\frac{\cdot}{N})|^6\\&\lesssim N^{-8}\int_{NB}|\Ec_{J_1}(\frac{\cdot}{N})|^6\int_{NB}|\Ec_{J_2}(\frac{\cdot}{N})|^6=\int_{B}|\Ec_{J_1}|^6\int_{B}|\Ec_{J_2}|^6.
\end{align*}
Second, he uses the following abstract inequality, that only relies on the positivity of $|\Ec_{J_i}|^6$
$$\sum_{B\subset \Omega}\int_{B}|\Ec_{J_1}|^6\int_{B}|\Ec_{J_2}|^6\lesssim \int_{\Omega}dx\int_{(y,z)\in [-1,1]^4\times [-1,1]^4}|\Ec_{J_1}(x+y)\Ec_{J_2}(x+z)|^6dydz.$$
Using periodicity in the $y_1,z_1$ variables, this is
$$\lesssim \frac1{N^2}\int_{x_1\in [0,N]\atop_{x_4,y_2,y_3,y_4,z_2,z_3,z_4\in[-1,1]}}dx_1\ldots dz_4\int_{y_1,z_1,x_2,x_3\in[0,N]}|\Ec_{J_1}(x+y)\Ec_{J_2}(x+z)|^6dy_1dz_1dx_2dx_3.$$
In short, the variable $x_1$ is now replaced with the new variables $y_1$ and $z_1$. It remains to prove that
\begin{equation}
\label{60}
\int_{y_1,z_1,x_2,x_3\in[0,N]}|\Ec_{J_1}(x+y)\Ec_{J_2}(x+z)|^6dy_1dz_1dx_2dx_3\lesssim_\epsilon N^{7+\epsilon},
\end{equation}
uniformly over $x_1,x_4,y_2,y_3,y_4,z_2,z_3,z_4$. With these variables fixed,
we make the affine change of variables $(y_1,z_1,x_2,x_3)\mapsto (u_1,u_2,w_1,w_2)$

\begin{equation}
\label{999999}
\begin{cases}u_1=(y_1+x_1)+\frac{2h_1}{N}(x_2+y_2)+\phi_3'(\frac{h_1}{N})(x_3+y_3)\\u_2=(z_1+x_1)+\frac{2h_2}{N}(x_2+z_2)+\phi_3'(\frac{h_2}{N})(x_3+z_3) \\w_1=\frac{x_2+y_2}{N}+\phi_3''(\frac{h_1}{N})\frac{x_3+y_3}{2N}\\w_2=\frac{x_2+z_2}{N}+\phi_3''(\frac{h_2}{N})\frac{x_3+z_3}{2N}\end{cases}.
\end{equation}
The  Jacobian is $\sim \frac1{N^2}$, due to \eqref{3}.
Note that $x_3+y_3=A(w_1-w_2)$, where $A$ depends just on $h_1,h_2$, and $|A|\sim N$. Using \eqref{57}   we may write the last integral as
$$
N^2\times
$$
\begin{equation}
\label{103}
\int_{|u_i|\lesssim N\atop_{|w_i|\lesssim 1}}|\sum_{m_i\le N^{\frac12}}c_{m_1,m_2}e(m_1u_1+m_1^2w_1+m_2u_2+m_2^2w_2+(\eta_1(m_1)+\eta_2(m_2))A(w_1-w_2))|^6.
\end{equation}
The coefficient $c_{m_1,m_2}$ depends only on $m_1,m_2,x_4, y_4,z_4$, but not on the variables of integration $u_i,w_i$.
The argument of each exponential  may be rewritten as
$$
\frac{m_1}{N^{1/2}}u_1N^{1/2}+(\psi_1(\frac{m_1}{N^{1/2}})+\psi_2(\frac{m_2}{N^{1/2}}))w_1N+$$$$\frac{m_1}{N^{1/2}}u_2N^{1/2}+(\psi_3(\frac{m_1}{N^{1/2}})+\psi_4(\frac{m_2}{N^{1/2}}))w_2N$$
where
$$\begin{cases}\psi_1(\xi)=\xi^2+&{\xi^3}\frac{A\phi_3'''(\frac{h_1}{N})}{3!N^{3/2}}+{\xi^4}\frac{A\phi_3''''(\frac{h_1}{N})}{4!N^{2}}+\ldots
\\\psi_2(\xi)=&{\xi^3}\frac{A\phi_3'''(\frac{h_2}{N})}{3!N^{3/2}}+{\xi^4}\frac{A\phi_3''''(\frac{h_2}{N})}{4!N^{2}}+\ldots\\\psi_3(\xi)=&{-\xi^3}\frac{A\phi_3'''(\frac{h_1}{N})}{3!N^{3/2}}-{\xi^4}\frac{A\phi_3''''(\frac{h_1}{N})}{4!N^{2}}-\ldots
\\\psi_4(\xi)=\xi^2-&{\xi^3}\frac{A\phi_3'''(\frac{h_2}{N})}{3!N^{3/2}}-{\xi^4}\frac{A\phi_3''''(\frac{h_2}{N})}{4!N^{2}}-\ldots\end{cases}.$$
These functions satisfy the requirements in Theorem \ref{65}. The integral \eqref{103} is the same as
$$N^{-3}\int_{u_i=O(N^{3/2})\atop_{w_i=O(N)}}|\sum_{m_1\le N^{1/2}}\sum_{m_2\le N^{1/2}}c_{m_1,m_2}e((u_1,u_2,w_1,w_2)\cdot\Psi(\frac{m_1}{N^{1/2}},\frac{m_2}{N^{1/2}}))|^6du_1du_2dw_1dw_2.$$
If we cover the domain of integration with $\sim N$ balls $B_N$ and apply \eqref{59} on each of them, we may dominate the above expression by
$$N^{-3}N(N^{\frac12+\epsilon}N^{4/6})^6=N^{5+\epsilon}.$$
This proves \eqref{60} and ends the proof. Note that this argument treats the cubic and higher order terms as perturbations of quadratic factors, as explained in the proof of \eqref{uy4tc674t7cyrfu}.
\smallskip

In summary, what is special about the case $\alpha=2$ is that the  range of $x_3$ in our initial integral over $\Omega$ is $[0,N]$. This was needed in producing the large spatial range $w_{i}=O(N)$ for our final variables, crucial for the application of \eqref{59}. This inequality provides decoupling into point masses, reducing the initial exponential sum to individual waves. In Section \ref{s8} we will see that when $\alpha$ is slightly smaller than 2, inequality \eqref{80} will have to replace \eqref{59}, leading to quadratic Weyl sums whose handling demands number theory.
\begin{re}
\label{66345}	
It is not clear whether a version of Bourgain's method could be made to work in the range $2<\alpha<3$. If successful, this would potentially provide a new argument for Vinogradov's Mean Value Theorem in $\R^3$. 
Decoupling on cubes with size $N^{\beta-1}$ and using \eqref{52} on balls $B_{N^\beta}$ leads to variables $y_1,z_1$ with associated  period equal to 1, much bigger than their range $N^{\beta-1}$. The change of variables \eqref{999999} is no longer efficient in this case. 
\end{re}

\section{Proof of Theorem \ref{4} in the case $\alpha=\frac32$}
\label{s6}
This time we let $\Omega=[0,1]\times [0,N]\times [0,N^{1/2}]\times [0,N^{1/2}]$. Recall that
$$\Ec_I(x)=\sum_{n\in I}e(nx_1+\frac{n^2}Nx_2+\phi_3(\frac{n}{N})Nx_3+\phi_4(\frac{n}{N})Nx_4).$$
We need to prove
\begin{equation}
\label{74}
\int_{\Omega}|\Ec_{I_1}\Ec_{I_2}|^6dx\lesssim_\epsilon N^{8+\epsilon}.
\end{equation}
In this case, we will only need assumptions \eqref{1} and \eqref{2}, but not \eqref{3}.
We start by presenting a general principle that will explain the subtleties of our argument. See also Remark \ref{69}.
\medskip

Consider two partitions of $\Omega$, one into cubes $B$ with side length $l$ and another one into cubes $\Delta$ with side length $L\ge l$. The intervals $J_i$ have length $\sqrt\frac{N}{l}$ and partition $I_i$. The intervals $U_i$ have length $\sqrt\frac{N}{L}$ and partition $I_i$.  The following holds, via two applications of Theorem \ref{49} (on cubes $B$ and $\Delta$, combined with Minkowski's inequality)
\begin{equation}
\label{72}
\|\Ec_{I_1}\Ec_{I_2}\|_{L^6(\Omega)}\lesssim_\epsilon N^{\epsilon}(\sum_{J_1,J_2}\|\Ec_{J_1}\Ec_{J_2}\|_{L^6(\Omega)}^2)^{1/2}\lesssim_\epsilon N^{\epsilon}(\sum_{U_1,U_2}\|\Ec_{U_1}\Ec_{U_2}\|_{L^6(\Omega)}^2)^{1/2}.
\end{equation}
Also, combining the above inequalities with H\"older shows that
\begin{equation}
\label{73}
\|\Ec_{I_1}\Ec_{I_2}\|_{L^6(\Omega)}\lesssim_\epsilon$$$$ N^{\epsilon}(\sharp(J_1,J_2))^{\frac13}(\sum_{J_1,J_2}\|\Ec_{J_1}\Ec_{J_2}\|_{L^6(\Omega)}^6)^{1/6}\lesssim_\epsilon N^{\epsilon}(\sharp(U_1,U_2))^{\frac13}(\sum_{U_1,U_2}\|\Ec_{U_1}\Ec_{U_2}\|_{L^6(\Omega)}^6)^{1/6}.
\end{equation}

Invoking periodicity in $x_1,x_2$ and the invariance of \eqref{1}, \eqref{2}, \eqref{3} under the change of sign $\phi_k\mapsto -\phi_k$, \eqref{74} is equivalent with proving that
\begin{equation}
\label{45}
\int_{[-N^{1/2},N^{1/2}]\times[-N,N]\times [-N^{1/2},N^{1/2}]\times [-N^{1/2},N^{1/2}]}|\Ec_{I_1}(x)\Ec_{I_2}(x)|^6dx\lesssim_\epsilon N^{8+\frac12+\epsilon}.
\end{equation}

We first demonstrate the inefficiency of $l^6L^6$ decoupling for this case, by working with the smaller domain
$$S=[-o(N^{1/2}),o(N^{1/2})]^4.$$
 We cover $S$ with unit cubes and apply decoupling into intervals $J_1,J_2$  of length $N^{1/2}$ as in the previous section,  to dominate
\begin{equation}
\label{46}
\int_S|\Ec_{I_1}(x)\Ec_{I_2}(x)|^6dx\lesssim_\epsilon N^{\epsilon}N^{6(\frac12-\frac16)}\sum_{J_1,J_2}\int_{S}|\Ec_{J_1}(x)\Ec_{J_2}(x)|^6dx.
\end{equation}
We will next show that the right hand side is too big, thus leading to an overestimate for our initial integral.
 When  $ J=[h,h+N^{1/2}]$ and $n=h+m\in J$ we write
 $$\phi_k(\frac{n}{N})=\phi_k(\frac{h}{N})+\phi_k'(\frac{h}{N})\frac{m}{N}+\frac12\phi_k''(\frac{h}{N})(\frac{m}{N})^2+O(\frac{m}{N})^3.$$
 If $|x_3|, |x_4|\ll N^{1/2}$ and $m\le N^{1/2}$, we guarantee that the contribution from higher order terms is small
 $$O(\frac{m}{N})^3N(|x_3|+|x_4|)\ll 1.$$
 If we collect the contributions from linear and quadratic terms we find
$$|\Ec_J(x)|=|\sum_{m\le N^{1/2}}e(mu+m^2w+o(1))|$$
where
$$
\begin{cases}u=x_1+\frac{h}{N}x_2+\phi_3'(\frac{h}{N})x_3+\phi_4'(\frac{h}{N})x_4\\w=\frac{x_2}{N}+\frac12\phi_3''(\frac{h}{N})\frac{x_3}{N}+\frac12\phi_4''(\frac{h}{N})\frac{x_4}{N}\end{cases}.
$$
Using Lemma \ref{18} we write
$$\int_{S}|\Ec_{J_1}(x)\Ec_{J_2}(x)|^6dx\gtrsim $$$$N^2\int_{(u_1,u_2)\in [0,o(N^{1/2})]^2\atop{(w_1,w_2)\in [0,o(N^{-1/2})]^2}}|\sum_{m\le N^{1/2}}e(mu_1+m^2w_1+o(1))|^6|\sum_{m\le N^{1/2}}e(mu_2+m^2w_2+o(1))|^6.$$
We now use the fact that we have constructive interference
$$|\sum_{m\le N^{1/2}}e(mu+m^2w+o(1))|\sim N^{1/2}$$
on the set of measure $\sim \frac1N$ $$(u,w)\in (\bigcup_{l\in \{0,1,\ldots,o(\sqrt{N})\}}[l,l+\frac1{\sqrt{N}}])\times [0,\frac1{N}].$$
It follows that
$$\int_{S}|\Ec_{J_1}(x)\Ec_{J_2}(x)|^6dx\gtrsim N^2N^{6}N^{-2}=N^6.$$
It is not hard to prove that this lower bound is sharp, but this has no relevance to us here.
The point of working with the symmetric domain $S$ was to make sure that $w_1,w_2\sim \frac1N$ are in the new domain of integration. Going back to \eqref{46}, the $l^6(L^6)$ decoupling method leads to  the upper bound
$$\int_S|\Ec_{I_1}(x)\Ec_{I_2}(x)|^6dx\lesssim_\epsilon N^{9+\epsilon}.$$
This falls short by the factor $N^{1/2}$ from proving \eqref{45}.

The second inequality in \eqref{73} shows that using $l^6(L^6)$ decoupling on  cubes $\Delta$ that are larger than $N$ will only worsen the upper bounds we get. On the other hand, working with smaller cubes will render decoupling inefficient. The resulting  exponential sums will be very difficult to handle using number theory, since the cubic terms are no longer $O(1)$ in this case.
\smallskip

Let us now describe the correct approach, that will critically rely on $l^2$, rather than $l^6$ decoupling. The following level set estimate will play a key role in various counting arguments. The main strength of the lemma is in the case when $|l_1|\sim |l_2|$.

Throughout the remainder of the paper, the letter $l$ will be used to denote integers, and their relative proximity to powers of $2$ will be denoted using the symbol $\sim$.  We make the harmless  convention to write $0\sim 2^0$.

\begin{lem}
	 	\label{42}
	 	Assume $\phi_3$, $\phi_4$ satisfy \eqref{1} and  \eqref{2}. Let $l_1,l_2$ with $\max\{|l_1|,|l_2|\}\sim 2^{j}$, $j\ge 0$, and let $$f(t)=l_1\phi_3''(t)+l_2\phi_4''(t).$$
	 	Then we can partition the range of $f$ into sets $R_s$ with $0\le s\le j$, each of which is the union of at most two intervals of length $\sim 2^s$, such that for each $v\in R_s$ we have
	 	$$|f^{-1}(v+[-O(1),O(1)])\cap[\frac12,1]|\lesssim \frac1{\sqrt{2^{j+s}}}.$$
	 	All implicit constants are universal over all pairs of such $\phi_3$, $\phi_4$ and over $l_1,l_2,s$.
	 \end{lem}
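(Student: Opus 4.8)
The plan is to analyze the second derivative $f(t) = l_1\phi_3''(t) + l_2\phi_4''(t)$ by differentiating once more: from \eqref{1} we have $\|f'\|_{L^\infty[\frac12,1]} \lesssim |l_1| + |l_2| \lesssim 2^j$, and from \eqref{2}, since $f'(t) = l_1\phi_3'''(t) + l_2\phi_4'''(t)$ and $f''(t) = l_1\phi_3^{(4)}(t) + l_2\phi_4^{(4)}(t)$, the nondegeneracy condition forces the vector $(f', f'')(t)$ to have length $\gtrsim \max\{|l_1|,|l_2|\} \sim 2^j$ at every point (this is exactly the content of the $2\times 2$ determinant lower bound $A_2$ applied to the pair $(l_1, l_2)$). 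So at each $t$, either $|f'(t)| \gtrsim 2^{j/2}\cdot 2^{j/2}$-ish or $|f''(t)|$ is comparably large; more precisely one gets a quantitative bound on how fast $f$ sweeps through values.

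\textbf{Key steps.} First I would establish the pointwise dichotomy: there is a universal $c>0$ such that for every $t \in [\frac12,1]$, $|f'(t)|^2 + 2^j|f''(t)| \gtrsim 2^{2j}$ — i.e. wherever $f'$ is small, $f''$ is bounded below by $\sim 2^j$. This follows from \eqref{2} after normalizing $(l_1,l_2)$ by $2^j$. Second, partition $[\frac12,1]$ according to the size of $|f'|$: on the (at most $O(1)$ many, by a standard argument using that $f''$ has controlled sign changes) maximal subintervals where $|f'(t)| \gtrsim 2^{j/2+s/2}$ — call them the "$s$-level" pieces — the map $f$ is a diffeomorphism onto its image with derivative $\gtrsim 2^{(j+s)/2}$, so a unit-length preimage-slab has length $\lesssim 2^{-(j+s)/2} = 1/\sqrt{2^{j+s}}$, which is exactly the claimed bound. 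Third, on the complementary region where $|f'| \lesssim 2^{j/2}$ (roughly the "$s=0$ base level"), the dichotomy forces $|f''| \gtrsim 2^j$, so $f'$ itself is strictly monotone there with derivative $\gtrsim 2^j$, hence $f$ is a union of $O(1)$ convex/concave arcs and the sublevel set $\{|f'| \le 2^{j/2}\}$ has small measure; on this set $f$ varies by at most $\lesssim 2^{j/2}\cdot(\text{measure}) \lesssim 2^{j/2}\cdot 2^{-j/2} = O(1)$, so the whole base-level image sits in an interval of length $O(1) \sim 2^0$, and one checks directly that a unit-slab preimage there has length $\lesssim 2^{-j/2} = 1/\sqrt{2^{j+0}}$ using $|f''| \gtrsim 2^j$ (van der Corput type: $|\{t : |f(t) - v| \le O(1)\}| \lesssim (2^{-j})^{1/2}$). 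Finally I would organize the value-space decomposition: as $t$ ranges over the $s$-level pieces, $|f(t)|$ traverses an interval of length $\lesssim 2^{j/2}\cdot(\text{length of } s\text{-piece})$; one shows these pieces group, by dyadic size of $|f'|$, into sets $R_s$ each consisting of at most two intervals of length $\sim 2^s$ (two, not one, because $f$ may approach its critical value from both sides), with $0 \le s \le j$ since $|f'| \le 2^j$ caps the top level.

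\textbf{Main obstacle.} The delicate point is the bookkeeping in step four: matching the "$|f'| \sim 2^{(j+s)/2}$" slices to value-intervals $R_s$ of length exactly $\sim 2^s$ and proving there are at most two such intervals per $s$. This requires controlling the number of critical points of $f$ (zeros of $f'$) and of $f'$ (zeros of $f''$), which in turn needs the analyticity hypothesis or at least that $f'$ and $f''$ are not identically small — here \eqref{2} saves the day, since it prevents $f'$ and $f''$ from vanishing simultaneously, so $f'$ has only simple zeros and their number is $O(1)$ uniformly. The other subtlety is making all constants uniform in $l_1, l_2, s$ and in the admissible $\phi_3, \phi_4$; this is handled by always working with the normalized pair $(l_1/2^j, l_2/2^j)$ on the unit sphere and invoking \eqref{1}--\eqref{2} as a compactness-free quantitative input. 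I do not expect to need \eqref{3}.
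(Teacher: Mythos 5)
Your proposal rests on the same two quantitative pillars as the paper: the pointwise dichotomy $\max\{|f'(t)|,|f''(t)|\}\sim 2^j$ coming from \eqref{2} with $t=s$, and the ensuing parabola-like bounds (preimages of unit intervals have measure $\lesssim 2^{-(j+s)/2}$ where $|f'|\gtrsim 2^{(j+s)/2}$, and $\lesssim 2^{-j/2}$ near a nondegenerate near-critical point). You are also right that \eqref{3} is not needed, and your count of zeros of $f'$ is correct (consecutive zeros are separated by $\gtrsim 1$, since $f'$ must climb to height $\sim 2^j$ with $|f''|\lesssim 2^j$ in between). But your step four, the one step you leave open, is where the content of the lemma actually sits, and the remedy you offer (finitely many simple zeros of $f'$) does not close it. Two things are missing. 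First, the lemma demands that for $v\in R_s$ the \emph{entire} set $f^{-1}(v+[-O(1),O(1)])\cap[\frac12,1]$ be short, not just its intersection with your ``$s$-level'' piece: a value at distance $\sim 2^s$ from one near-critical value could a priori lie within $O(1)$ of the image of a \emph{different} component of $\{|f'| \text{ small}\}$, and then its full preimage has measure $\gtrsim 2^{-j/2}\gg 2^{-(j+s)/2}$, so it cannot be placed in $R_s$ at all; counting zeros of $f'$ does not exclude this, and one needs a global statement tying the distance from $v$ to the near-critical values to the size of $|f'|$ on all of $f^{-1}(v+O(1))$. Second, even granting that, if two well-separated near-critical values occurred, the natural level sets ``distance $\sim 2^s$ to the nearest critical value'' consist of up to four intervals, so the ``at most two intervals per $R_s$'' claim is not automatic from your grouping; your sketch asserts it without a mechanism. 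Relatedly, your statement that the whole base-level image sits in a single $O(1)$-interval is only true per connected component of $\{|f'|\lesssim 2^{j/2}\}$; with several components the images can be far apart, which is exactly the configuration that makes the bookkeeping delicate.

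The paper sidesteps all of this by organizing the proof around a single point rather than a domain decomposition: it takes $t_0$ to be a minimizer of $|f'|$ on $[\frac12,1]$, Taylor-expands $f$ about $t_0$ (using $|f''(t_0)|\sim 2^j$ whenever $|f'(t_0)|\ll 2^j$), and simply \emph{defines} $R_s:=\{v:\,|v-f(t_0)|\sim 2^s\}$, with one bottom set $\{v:\,|v-f(t_0)|\lesssim \max(1,|f'(t_0)|^22^{-j})\}$ and with $R_j$ equal to the whole range in the easy case $|f'(t_0)|\sim 2^j$. Each such set is automatically a union of at most two intervals of length $\sim 2^s$, the sets automatically partition the range, and the expansion shows that any $t$ with $|f(t)-f(t_0)|\sim 2^s$ satisfies $|t-t_0|\sim 2^{(s-j)/2}$, hence sees derivative of size $\sim 2^{(s+j)/2}$; this single global expansion is what rules out the interference between distinct near-critical regions that your domain-first scheme must confront by hand. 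So the missing idea in your proposal is precisely this ``one effective critical value'' structure (or an alternative value-space partition that spends its two intervals per level on the genuinely dangerous values); as written, the proposal is a plausible outline of the same circle of ideas but not yet a proof.
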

	 \begin{proof}
	 	The result is trivial if $l_1=l_2=0$, so we will next assume that $\max\{|l_1|,|l_2|\}\ge 1$.
	 	
	 	We restrict $f$ to the interval $[\frac12,1]$.	
	 	Since $$ \begin{bmatrix}f'(t)\\f''(t)\end{bmatrix}=\begin{bmatrix}
	 	\phi_3^{(3)}(t)&\phi_4^{(3)}(t)\\ \phi_3^{(4)}(t)&\phi_4^{(4)}(t)
	 	\end{bmatrix}\begin{bmatrix}l_1\\l_2\end{bmatrix},$$
	 	\eqref{2} implies that for each $t\in[\frac12,1]$ we have
	 	\begin{equation}
	 	\label{40}
	 	\max\{|f'(t)|,|f''(t)|\}\sim 2^j.
	 	\end{equation}
	 	
	 	We let $t_0$ be a point in $[\frac12,1]$ where $|f'|$ attains its minimum. If $|f'(t_0)|\sim 2^j$, then we may take $R_j$ to be the whole range of $f$, and all other $R_s$ to be empty. Indeed, the Mean Value Theorem shows that
	 	$$|f(t_1)-f(t_2)|\gtrsim 1$$
	 	whenever $|t_1-t_2|\gtrsim 2^{-j}$. It is worth observing that if $|l_1|\gg |l_2|$, then \eqref{3} would immediately guarantee that $|f'(t_0)|\sim 2^j$.
	 	
	 	We now assume that $|f'(t_0)|\ll 2^{j}$. Due to \eqref{40}, we must have that $|f''(t_0)|\sim 2^j$.  We write for $t\in [\frac12,1]$
	 	\begin{equation}
	 	\label{41}
	 	f(t)=f(t_0)+f'(t_0)(t-t_0)+\frac{f''(t_0)(t-t_0)^2}{2}+O(2^j(t-t_0)^3).
	 	\end{equation}
	 	\\
	 	\\
	 	Case 1. Consider $s$ with $2^j\ge 2^s> C \max\{\frac{|f'(t_0)|^2}{2^j},1\}$, for some large enough $C$ independent of $j$. Using this and \eqref{41}, we see that
	 	\begin{equation}
	 	\label{104}
	 	|f(t)-f(t_0)|\ll 2^s\;\text{ whenever }|t-t_0|\ll 2^{\frac{s-j}{2}}.
	 	\end{equation}

	 	 Define
	 	$$R_s=\{v:\;|v-f(t_0)|\sim 2^s\}.$$
	 	Let $v\in R_s$ and let $w=v+O(1)$. Thus, we also have $|w-f(t_0)|\sim 2^s$. Let $t_1,t_2$ be such that $f(t_1)=v$, $f(t_2)=w$.
	 	Using \eqref{104} it follows that $|t_1-t_0|,|t_2-t_0|\gtrsim 2^{\frac{s-j}{2}}$. Our assumption shows that $2^{\frac{s-j}{2}}\gg \frac{|f'(t_0)|}{2^j}$. Thus, $|t_1-t_0|,|t_2-t_0|\gg \frac{|f'(t_0)|}{2^j}$, and using \eqref{41} again we conclude that
	 	$$|f(t_i)-f(t_0)|\sim 2^j|t_i-t_0|^2.$$
	 	Thus, $|t_i-t_0|\sim 2^{\frac{s-j}2}$. Using again  \eqref{41} we find that if $t_1,t_2$ are on the same side of $t_0$ then
	 	$$|f(t_1)-f(t_2)|\sim 2^{\frac{s+j}{2}}|t_1-t_2|.$$
	 	We conclude that $|t_1-t_2|\lesssim \frac1{\sqrt{2^{j+s}}}$, as desired.
	 	\\
	 	\\
	 	Next, we define $R_s$ for smaller values of $s$. We distinguish two cases.
	 	\\
	 	\\
	 	Case 2a.  Assume now that $|f'(t_0)|\le 2^{j/2}$. For  $s$ such that  $2^s$ is the largest dyadic power $\le C \max\{\frac{|f'(t_0)|^2}{2^j},1\}=C$ we define
	 	$$R_s=\{v:\;|v-f(t_0)|\lesssim 2^s\}.$$
	 	We also let $R_{s'}=\emptyset$ for smaller values of $s'$. Let $v\in R_s$ and $w=v+O(1)$. Let $t_1,t_2$ be such that $f(t_1)=v$, $f(t_2)=w$. Since in fact  $|f(t_i)-f(t_0)|\lesssim 1$, \eqref{41} forces $|t_i-t_0|\lesssim 2^{-j/2}\sim \frac{1}{\sqrt{2^{j+s}}}$, as desired.
	 	\\
	 	\\
	 	Case 2b.  Assume now that $|f'(t_0)|> 2^{j/2}$. For $s$ such that  $2^s$ is the largest dyadic power $\le C \max\{\frac{|f'(t_0)|^2}{2^j},1\}=C\frac{|f'(t_0)|^2}{2^j}$
	 	we define
	 	$$R_s=\{v:\;|v-f(t_0)|\lesssim 2^s\}.$$
	 	We also let $R_{s'}=\emptyset$ for smaller values of $s'$.
	 	Let $v\in R_s$ and $w=v+O(1)$. Let $t_1,t_2$ be such that $f(t_1)=v$, $f(t_2)=w$. Using  that $|f'(t)|\ge |f'(t_0)|$ for all $t$, we find that
	 	$$|f(t_1)-f(t_2)|\ge |t_1-t_2||f'(t_0)|.$$
	 	We conclude that $$|t_1-t_2|\lesssim \frac{1}{|f'(t_0)|}\sim \frac1{\sqrt{2^{j+s}}},$$
	 	as desired.
	 	
	 \end{proof}

\bigskip

From now on, we will implicitly assume that all Weyl sums are smooth, as in Lemma \ref{22}. This can be easily arranged using partitions of unity, namely  working with smooth $\gamma$ satisfying
$$\sum_{l\in\Z}\gamma(\cdot+l)=1_\R.$$
To simplify notation, these weights will be ignored.
\smallskip

Cover $\Omega$ with unit cubes $B=B_{p,l_1,l_2}=[0,1]\times [p,p+1]\times [l_1,l_1+1]\times [l_2,l_2+1]$ with
$$p\le N,\;\;l_1,l_2\le N^{1/2}.$$
We first write
$$\int_\Omega|\Ec_{I_1}\Ec_{I_2}|^6\sim \sum_{B\subset \Omega}\int_B|\Ec_{I_1}\Ec_{I_2}|^6.$$
We use $l^2$ decoupling (Theorem \ref{49}) on each $B$
$$\int_B|\Ec_{I_1}\Ec_{I_2}|^6\lesssim_\epsilon N^\epsilon (\sum_{J_1\subset I_1}\sum_{J_2\subset I_2}(\int_B|\Ec_{J_1}\Ec_{J_2}|^6)^{1/3})^3$$
where $J_i$ is of the form $[h_i,h_i+N^{1/2}]$. When $x\in B$ and $ J=[h,h+N^{1/2}]$
$$|\Ec_J(x)|=|\sum_{m\le N^{1/2}}e(mu+m^2w+{m^3}v+O(N^{-1/4}))|$$
where
\begin{equation}
\label{huiyfyrfyrfy}
\begin{cases}u=x_1+\frac{2h}{N}x_2+\phi_3'(\frac{h}{N})x_3+\phi_4'(\frac{h}{N})x_4\\w=\frac{x_2}{N}+\frac12\phi_3''(\frac{h}{N})\frac{x_3}{N}+\frac12\phi_4''(\frac{h}{N})\frac{x_4}{N}\\v=\frac{\phi_3'''(\frac{h}{N}){x_3}+\phi_4'''(\frac{h}{N}){x_4}}{6N^2}\end{cases}.
\end{equation}
The term $O(N^{-1/4})$ can be dismissed as it produces tiny errors consistent with square root cancellation. Note that since $v=O(N^{-3/2})$, we have
$$|\sum_{m\le N^{1/2}}e(mu+m^2w+{m^3}v)|\approx |\sum_{m\le N^{1/2}}e(mu+m^2w)|.$$
See Lemma \ref{22} for a rigorous argument. The key point is that we may dismiss the cubic terms.
\smallskip

Write
$$I(h_1,h_2,B)=$$$$\int_{(u_1,u_2,w_1,w_2)\in [0,1]^2\times [\frac{a_1-O(1)}{N},\frac{a_1+O(1)}{N}]\times [\frac{a_2-O(1)}{N},\frac{a_2+O(1)}{N}]}|\prod_{i=1}^2\sum_{m_i\le N^{1/2}}e(m_iu_i+m_i^2w_i)|^6du_1du_2dw_1dw_2,$$
where
\begin{equation}
\label{19}
\begin{cases}a_1=p+\frac{l_1}2\phi_3''(\frac{h_1}{N})+\frac{l_2}2\phi_4''(\frac{h_1}{N})\\
a_2=p+\frac{l_1}2\phi_3''(\frac{h_2}{N})+\frac{l_2}2\phi_4''(\frac{h_2}{N})
\end{cases}.\end{equation}
Via the change of variables with Jacobian $\sim \frac1{N^2}$ (Lemma \ref{18})
$$\begin{cases}u_1=x_1+\frac{2h_1}{N}x_2+\phi_3'(\frac{h_1}{N})x_3+\phi_4'(\frac{h_1}{N})x_4\\w_1=\frac{x_2}{N}+\frac12\phi_3''(\frac{h_1}{N})\frac{x_3}{N}+\frac12\phi_4''(\frac{h_1}{N})\frac{x_4}{N}\\u_2=x_1+\frac{2h_2}{N}x_2+\phi_3'(\frac{h_2}{N})x_3+\phi_4'(\frac{h_2}{N})x_4\\w_2=\frac{x_2}{N}+\frac12\phi_3''(\frac{h_2}{N})\frac{x_3}{N}+\frac12\phi_4''(\frac{h_2}{N})\frac{x_4}{N}\end{cases}$$
we see that
$$\int_B|\Ec_{J_1}\Ec_{J_2}|^6\lesssim N^2I(h_1,h_2,B).$$
Writing $$I_{a}=\int_{[0,1]\times [\frac{a-O(1)}{N},\frac{a+O(1)}{N}]}|\sum_{m\le N^{1/2}}e(mu+m^2w)|^6dudw$$ we find that
$$\int_B|\Ec_{J_1}\Ec_{J_2}|^6\lesssim N^2I_{a_1}I_{a_2}.$$
Let us analyze \eqref{19}. The question is, for fixed $B$, what are the values of $a_1,a_2$ that arise (modulo $O(1)$ error terms), and what is their multiplicity, when $h_1,h_2$ range through the multiples of $N^{1/2}$ in $[1,N]$.

Assume $l_1\sim 2^{j_1}$, $l_2\sim 2^{j_2}$, with $2^{j_1},2^{j_2}\le N^{1/2}$. We may assume $j_1\le j_2$, the other case is completely similar. We apply Lemma \ref{42} to $f(t)=\frac12(l_1\phi_3''(t)+l_2\phi_4''(t))$.
For each $0\le s_1,s_2\le j_2$ and each $p$ we have $O(2^{s_1+s_2})$ pairs $(a_1,a_2)$ of integers with $a_1-p\in R_{s_1}(l_1,l_2)$ and $a_2-p\in R_{s_2}(l_1,l_2)$. Note that we index the intervals $R_{s_i}$ from Lemma \ref{42}  by $l_1,l_2$.
 For each such pair $(a_1,a_2)$, \eqref{19} has $O(\frac{N}{2^{j_2}2^{\frac{s_1+s_2}2}})$ solutions $(h_1,h_2)$.
 When we count solutions, we tolerate error terms of size $O(1)$.
 \medskip

 Thus
\begin{align*}
&\sum_{B\subset \Omega}\int_B|\Ec_{I_1}\Ec_{I_2}|^6\\&\lesssim N^2\sum_{p\le N}\sum_{2^{j_2}\lesssim N^{1/2}}\sum_{2^{j_1}\lesssim 2^{j_2}}\sum_{l_1\sim 2^{j_1}}\sum_{l_2\sim 2^{j_2}}\sum_{s_1,s_2\le j_2}(\frac{N}{2^{j_2+\frac{s_1+s_2}{2}}})^3(\sum_{a_1\in p+ R_{s_1}(l_1,l_2)}\sum_{a_2\in p+ R_{s_2}(l_1,l_2)}I_{a_1}^{1/3}I_{a_2}^{1/3})^3
\\&\lesssim
N^2\sum_{p\le N}\sum_{2^{j_2}\lesssim N^{1/2}}\sum_{2^{j_1}\lesssim 2^{j_2}}2^{j_1+j_2}(\frac{N}{2^{j_2}})^3\sum_{s_1,s_2\le j_2}(\sum_{a_1\in p+ R_{s_1}(l_1,l_2)}I_{a_1}^{2/3})^{3/2}(\sum_{a_2\in p+ R_{s_2}(l_1,l_2)}I_{a_2}^{2/3})^{3/2}.
\end{align*}
The last inequality follows from Cauchy--Schwarz. Next, we observe that $p+R_{s_i}(l_1,l_2)\subset [p-O(2^{j_2}),p+O(2^{j_2})]$. These intervals are roughly the same for roughly $2^{j_2}$ values of $p$. We can thus dominate the above by
\begin{align}
\label{44}
\begin{split}
&\les N^2\sum_{2^{j_2}\lesssim N^{1/2}}\sum_{2^{j_1}\lesssim 2^{j_2}}2^{j_1+j_2}(\frac{N}{2^{j_2}})^32^{j_2}\sum_{H\subset [0,N]\atop{|H|={2^{j_2}}}}(\sum_{a\in H}I_{a}^{2/3})^3\\&\sim  N^5\sum_{2^{j}\lesssim N^{1/2}}\sum_{H\subset [0,N]\atop{|H|={2^{j}}}}(\sum_{a\in H}I_{a}^{2/3})^3.
\end{split}
\end{align}

The sum runs over pairwise disjoint intervals $H$. It is easily seen to be $O(N^8)$, by using  the following lemma with $M=N^{1/2}$.
\begin{lem}
\label{76}	
	Let $$I_{a}=\int_{[0,1]\times [\frac{a-O(1)}{M^2},\frac{a+O(1)}{M^2}]}|\sum_{m\le M}e(mu+m^2w)|^6dudw$$
For each $2^{j}\le M^2$ we have
$$\sum_{H\subset [0,M^2]\atop{|H|={2^{j}}}}(\sum_{a\in H}I_{a}^{2/3})^3\les M^42^{2j}+M^6.$$
\end{lem}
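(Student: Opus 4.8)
The plan is to understand the distribution of the quantities $I_a$ and then optimize the sum over intervals $H$. First I would establish pointwise and average bounds on $I_a$ coming from classical quadratic Weyl sum estimates. The trivial bound is $I_a \lesssim N^\epsilon M^{-1}$ (i.e. $M^{6}\cdot |[0,1]\times[\frac{O(1)}{M^2},\frac{O(1)}{M^2}]|$ with square-root cancellation in the Gauss sums, valid away from the origin), while near $a=O(1)$, where constructive interference occurs, $I_a$ can be as large as $\sim M^{-1}\cdot M^6/M = M^4$-type contributions — more precisely one expects $\sum_a I_a \lesssim_\epsilon M^{4+\epsilon}$ as a manifestation of the sixth-moment (essentially $L^6$) estimate for quadratic Weyl sums, which is the $d=2$ Vinogradov / Hua-type bound. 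So the two inputs I would isolate are: (a) $I_a \les M^{-1}$ for $a \neq O(1)$, with $I_a \les M^3$ allowed for the $O(1)$ exceptional values near the origin (coming from the major arc at $a\approx 0$); and (b) $\sum_{a\le M^2} I_a \les M^{4}$.

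Next I would split the sum $\sum_{H}(\sum_{a\in H}I_a^{2/3})^3$ into the contribution of the single exceptional interval $H_0$ containing $a=O(1)$, and the contribution of all other intervals $H$. For $H_0$: using $I_a\les M^3$ for the handful of exceptional $a$ and $I_a \les M^{-1}$ for the rest of the $2^j$ values in $H_0$, Hölder gives $(\sum_{a\in H_0}I_a^{2/3})^3 \les (M^{2} + 2^j M^{-2/3})^3 \les M^6 + 2^{3j}M^{-2}$, and since $2^j\le M^2$ the second term is $\les M^4 2^{2j}$, comfortably inside the claimed bound. For a generic interval $H$ (no exceptional $a$), I would apply Hölder in the form $(\sum_{a\in H}I_a^{2/3})^3 \le |H|^{2}\sum_{a\in H}I_a^2 \le 2^{2j}\,\|I\|_{\infty,H}\sum_{a\in H}I_a \les 2^{2j}M^{-1}\sum_{a\in H}I_a$; summing over the disjoint $H$'s and using $\sum_a I_a \les M^4$ yields $\les 2^{2j}M^{-1}\cdot M^4 = 2^{2j}M^{3}$, which is $\le M^4 2^{2j}$ since $M\ge 1$. (One could also run Hölder as $(\sum_{a\in H}I_a^{2/3})^3\le |H|^2\sum I_a^2$ and bound $\sum I_a^2$ by $\|I\|_\infty\sum I_a$ in one shot over all of $[0,M^2]$.) Combining, $\sum_H(\sum_{a\in H}I_a^{2/3})^3 \les M^4 2^{2j} + M^6$.

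The main obstacle I expect is establishing the average bound (b), $\sum_{a\le M^2} I_a \les M^{4+\epsilon}$, rigorously and uniformly. Unwinding the definition, $\sum_a I_a = \int_{[0,1]\times[0,O(1)/M^2]\cdot M^2}\ldots$ is essentially $\int_{[0,1]\times[0,1]}|\sum_{m\le M}e(mu+m^2w)|^6\,du\,dw$ after rescaling $w$, which is the sixth moment of a quadratic Gauss/Weyl sum; this is $\les M^{4}$ by the classical $L^6$ theory of such sums (equivalently Hua's lemma, or $L^6$ decoupling for the parabola, i.e. Theorem \ref{decpar} applied with exponent $6$ and the number-theoretic input that the relevant count of solutions to $m_1+m_2+m_3=m_4+m_5+m_6$, $m_1^2+m_2^2+m_3^2=m_4^2+m_5^2+m_6^2$ is $O(M^{3+\epsilon})$). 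The delicate bookkeeping is matching the interval $[\frac{a-O(1)}{M^2},\frac{a+O(1)}{M^2}]$ to a genuine partition of the $w$-torus and controlling the $O(1)$ overlaps, and keeping the pointwise bound $I_a\les M^{-1}$ honest on the minor arcs (a direct van der Corput / Weyl estimate for $|\sum_{m\le M}e(mu+m^2w)|$ after averaging in $u$), but these are standard once (b) is in hand. A secondary point is being careful that the exceptional set where $I_a$ is large really consists of only $O(1)$ values of $a$ — this is exactly the statement that the major arc for a quadratic sum of length $M$ on the $w$-scale $1/M^2$ is a single arc near $w=0$, so only $a = O(1)$ produces the full $M^{1/2}$-size sum.
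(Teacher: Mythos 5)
There is a genuine gap, and it sits exactly where you flagged the "secondary point": your pointwise input (a) is false, in two ways. First, the typical (minor-arc) size of $I_a$ is $\sim M$, not $\les M^{-1}$: square-root cancellation gives $|S|\sim M^{1/2}$, so $|S|^6\sim M^3$, times the measure $1\cdot M^{-2}$ of the domain yields $M$ (your $M^{-1}$ is off by $M^2$; note that $I_a\equiv M$ for all $M^2$ values of $a$ already saturates the term $M^42^{2j}$, so there is no room for slack here). Second, and more seriously, the set of exceptional $a$ is \emph{not} $O(1)$ values near $a=0$: the quadratic Weyl sum has major arcs at every rational $w=b/q$ with $q\le M$, not just at $w=0$. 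Concretely, for $a$ within $O(1)$ of $M^2/2$ one has $w=\tfrac12+O(M^{-2})$, and since $e(m^2/2)=e(m/2)$ the sum equals $\sum_{m\le M}e(m(u+\tfrac12))+o(M)$ on that range, so $|S|\gtrsim M$ for $u$ in an interval of length $\sim M^{-1}$, giving $I_a\gtrsim M^3$ there as well; more generally $I_a$ can be as large as $\sim M^3/q^2$ for $a$ near $M^2b/q$, with polynomial decay $I_a\lesssim 2^{2k}M$ at distance $\sim M/(q2^k)$. These spikes are spread over all of $[0,M^2]$ and their number grows like $q^2$, so the split "one exceptional interval $H_0$ plus generic intervals with $\sup_H I_a\les M^{-1}$" collapses: by Dirichlet every $H$ of length $2^j$ contains some $a$ near a rational with denominator $q\lesssim M/2^{j/2}$, where $I_a$ can be as large as $\sim 2^jM\gg M$. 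Your average bound (b) is fine (in fact $\sum_a I_a\les M^{3}$ by Hua/$L^6$ for the parabola), but Hölder with the true sup then overshoots, e.g.\ $2^{2j}\cdot 2^jM\cdot M^3$ is far above $M^42^{2j}+M^6$ when $2^j$ is large.

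What is actually needed — and what the paper's proof does — is a stratification by the full Farey dissection: fix the denominator scale $q\sim Q$ and the distance scale $|w-\tfrac bq|\sim\frac1{qM2^k}$, use the smooth Weyl bound (Lemma \ref{22}) to get $I_a\lesssim 2^{2k}M$ on that stratum, and then count, for each $H$, how many arcs with $q\sim Q$ it meets ($\lesssim\max(1,2^jQ^2/M^2)$) and how many $a$ contribute per arc at scale $2^k$ ($\sim M/(Q2^k)$); the cases $Q^2\gtrless M^2/2^j$ are treated separately. One can indeed recover the bound by a Hölder argument once these corrected spike heights, multiplicities and decay rates are in hand, but establishing them \emph{is} the content of the lemma's proof, so as written your argument is missing its main ingredient rather than just bookkeeping.
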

\begin{proof}

The arcs $\{x\in[0,1):\;\dist(x-\frac{b}{q},\Z)\le \frac{1}{qM}\}$, with $1\le b\le q\le M$ and $(b,q)=1$, cover $[0,1)$. They may overlap, which leads to double counting in our argument, but this will be harmless.

We consider the contribution from those $I_a$ with $\frac{a}{M^2}$ in some arc with $q\sim Q$. Here $Q$ is dyadic and $Q\lesssim M$.
We separate the proof into two cases. Note that $H/M^2\subset [0,1]$ and has length $2^j/M^2$. Also, $|b/q-b'/q'|\ge 1/qq'$.
\\
\\
Case 1. $Q^2>\frac{M^2}{2^j}$. Each $H/M^2$ intersects $\lesssim\frac{2^jQ^2}{M^2}$  arcs with $q\sim Q$. For each such $b/q$, and each $1\le 2^k\le \frac{M}{q}$ there are $\sim\frac{M}{2^kQ}$ values of $a$ with
$$|\frac{a}{M^2}-\frac{b}{q}|\sim \frac1{qM2^k}.$$
Call $\A(Q,k)$ the collection of all these $a$.
For each $a\in\A(Q,k)$, Lemma \ref{22} gives
$$I_a\lesssim \frac1{2^kM^2}(M^{1/2}2^{k/2})^6=2^{2k}M.$$

The contribution from  $a\in\A(Q,k)$ is
$$\sum_{H\subset [0,M^2]\atop{|H|={2^{j}}}}(\sum_{a\in H\cap \A(Q,k)}I_{a}^{2/3})^3\lesssim \frac{M^2}{2^j}(\frac{2^jQ^2}{M^2}\frac{M}{2^kQ})^3(M2^{2k})^2=M2^k2^{2j}Q^3.$$
This is easily seen to be $O(2^{2j}M^4)$, since $2^{k}=O(MQ^{-1})$ and  $Q=O(M)$.
The contribution to the full sum  is acceptable, since there are $\les 1$ values of $Q$ and $k$.
\\
\\
Case 2. $Q^2<\frac{M^2}{2^j}$. There are $\lesssim Q^2$   arcs with $q\sim Q$. Essentially, each $H$ is either disjoint from all these (so not contributing at this stage) or (essentially) contained inside one of them. We distinguish two subcases.
\\
\\
(a) If $\frac{2^j}{M^2}<\frac{1}{QM2^k}$ (this is stronger than $Q^2<\frac{M^2}{2^j}$), there are $\frac{1}{QM2^k}\frac{M^2}{2^j}$ intervals $H/M^2$ contained in $[\frac{b}{q}-\frac{1}{QM2^k},\frac{b}{q}+\frac{1}{QM2^k}]$.
Their contribution is
\begin{align*}
\sum_{b<q\sim Q}\sum_{H/M^2\subset [\frac{b}{q}-\frac{1}{QM2^k},\frac{b}{q}+\frac{1}{QM2^k}]\atop{|H|={2^{j}}}}(\sum_{a\in H}I_{a}^{2/3})^3&\lesssim  Q^2\frac{M}{Q2^k2^j}2^{3j}(2^{2k}M)^2\\&=2^{2j}M^3Q2^{3k}.
\end{align*}
Using our assumption, this is $O(2^{-j}M^6)$.
\\
\\
(b) If $\frac{2^j}{M^2}>\frac{1}{QM2^k}$, for each $b/q$ with $q\sim Q$ there is only one $H/M^2$ that intersects
$$|t-\frac{b}{q}|\sim \frac1{qM2^k}$$
with at most $M^2\frac1{qM2^k}$ values of $a$ contributing from $H$. The contribution from the $O(Q^2)$  arcs with denominator $\sim Q$ is
$$\lesssim Q^2(\frac{M}{Q2^k})^3(2^{2k}M)^2=\frac{2^kM^{5}}{Q}.$$
Since $2^k\lesssim M$, this term is $O(M^6)$.

\end{proof}

\begin{re}
\label{69}

One may wonder whether there is a clever way to estimate the sum 	
$$ \sum_{B\subset \Omega}(\sum_{J_1\subset I_1}\sum_{J_2\subset I_2}(\int_B|\Ec_{J_1}\Ec_{J_2}|^6)^{1/3})^3,$$
without using number theory. To this end, the most natural thing to try is to use Minkowski's inequality and to bound this expression by
\begin{equation}
\label{50}
(\sum_{J_1\subset I_1}\sum_{J_2\subset I_2}(\int_\Omega|\Ec_{J_1}\Ec_{J_2}|^6)^{1/3})^3.
\end{equation}
However, a change of variables as before shows that for each $J_1,J_2$
\begin{align*}
\int_\Omega|\Ec_{J_1}\Ec_{J_2}|^6&\ge N^{-1/2}\int_{[0,N^{1/2}]^4}|\Ec_{J_1}\Ec_{J_2}|^6\\&\sim N^{-1/2}N^2[\int_{(u,w)\in[0,N^{1/2}]\times[0,N^{-1/2}]}|\sum_{m=1}^{N^{1/2}}e(mu+m^2w)|^6dudw]^2\\&\sim N^{11/2}.
\end{align*}
Using this approach, the upper bound we get for \eqref{50} is $N^{8+\frac12}$. As in our earlier attempt to use $l^6$ rather than $l^2$ decoupling, this estimate falls short by a factor of $N^{1/2}$ from the sharp upper bound $N^8$.

Also, due to \eqref{72}, the expression \eqref{50} is only getting larger if $J_i$ are replaced with smaller intervals. Thus, decoupling on cubes larger than $B$ (such as $N^{1/2}$-cubes) only worsens our upper bound.

A similar computation shows that the only case of Conjecture \ref{51} that can be approached with $l^6L^6$ decoupling is the case $\alpha=2$ discussed in the previous section.
\end{re}
\bigskip

\section{The case $\frac32<\alpha\le \frac95$}
\label{s7}

Let $\Omega=[0,N^{\frac{2\alpha}{3}-1}]\times [0,N]\times [N^{\frac12}, N^{\alpha-1}]\times [0,N^{\beta-1}]$. Using 1-periodicity in $x_1$, we need to prove that
$$\int_\Omega|\Ec_{I_1}\Ec_{I_2}|^6\lesssim_\epsilon N^{7+\frac{2\alpha}{3}+\epsilon}.$$
We cover $\Omega$ with cubes $B$ of side length $N^{\frac{2\alpha}{3}-1}$ and write
$$\int_\Omega|\Ec_{I_1}\Ec_{I_2}|^6\sim \sum_{B\subset \Omega}\int_B|\Ec_{I_1}\Ec_{I_2}|^6.$$
The size of these cubes is the smallest that will make cubic terms negligible after the decoupling. Since we need $\frac{2\alpha}{3}-1\le \beta-1$ in order to not exceed $\Omega$, this leads to the restriction $\alpha\le \frac{9}{5}$. Note also that the $x_3$ coordinate is $\ge N^{1/2}$. We can afford this omission because of the $\alpha=\frac32$ case discussed in the previous section. Since we are about to decouple on cubes $B$ with size larger than 1, Remark \ref{69} tells us that applying this method for $x$ near the origin leads to losses. Our next argument will make explicit use of the fact that $x_3$ is away from the origin.

We use $l^2$ decoupling (Theorem \ref{49}) on each $B$ (or rather $NB$, after rescaling)
$$\int_B|\Ec_{I_1}\Ec_{I_2}|^6\lesssim_\epsilon N^\epsilon (\sum_{J_1\subset I_1}\sum_{J_2\subset I_2}(\int_B|\Ec_{J_1}\Ec_{J_2}|^6)^{1/3})^3$$
where $J_i$ is of the form $[h_i,h_i+M]$, with $M=N^{1-\frac{\alpha}{3}}$. We write $B$ as
$$[0,N^{\frac{2\alpha}{3}-1}]\times [pN^{\frac{2\alpha}{3}-1},(p+1)N^{\frac{2\alpha}{3}-1}]\times [l_1N^{\frac{2\alpha}{3}-1},(l_1+1)N^{\frac{2\alpha}{3}-1}]\times[l_2N^{\frac{2\alpha}{3}-1},(l_2+1)N^{\frac{2\alpha}{3}-1}]$$
with the integers $0\le p\le M^2$, $N^{\frac32-\frac{2\alpha}{3}}\le l_1\le N^{\frac{\alpha}{3}}$ and $0\le l_2\le N^{3-\frac{5\alpha}{3}}$. Since $\alpha>\frac32$, we have that $l_1\gg l_2$.

We let as before  $$I_{a}=\int_{[0,1]\times [\frac{a-O(1)}{M^2},\frac{a+O(1)}{M^2}]}|\sum_{m\le M}e(mu+m^2w)|^6dudw$$

With a change of variables as in the previous section, we have
$$\int_B|\Ec_{J_1}\Ec_{J_2}|^6\sim N^2(N^{\frac{2\alpha}{3}-1})^2I_{a_1}I_{a_2}$$
where
\begin{equation}
\label{70}
\begin{cases}a_1=p+\frac{l_1}2\phi_3''(\frac{h_1}{N})+\frac{l_2}2\phi_4''(\frac{h_1}{N})\\
a_2=p+\frac{l_1}2\phi_3''(\frac{h_2}{N})+\frac{l_2}2\phi_4''(\frac{h_2}{N})
\end{cases}.\end{equation}
It is crucial that the cubic (and also the higher order) term is $O(1)$, cf. \eqref{huiyfyrfyrfy}
$$m^3\frac{\phi_3'''(\frac{h}{N}){x_3}+\phi_4'''(\frac{h}{N}){x_4}}{N^2}=O(1), \;\;\forall x\in\Omega,$$
so it may be neglected according to Lemma \ref{22}.
\medskip

If $l_1\sim 2^{j_1}$, it is immediate that $|a_1-p|\lesssim 2^{j_1}$, $|a_2-p|\lesssim 2^{j_1}$. Also, for fixed $a_1,a_2,p,l_1,l_2$, \eqref{70} has $O((\frac{N}{M2^{j_1}})^2)$ solutions $(h_1,h_2)$, modulo $O(1)$. We do not need Lemma \ref{42} here, since this time $l_1$ is much larger than $l_2$. We now dominate $\int_\Omega|\Ec_{I_1}\Ec_{I_2}|^6$ as before by
$$\lesssim_\epsilon N^{\epsilon}\sum_{j_1:\;N^{\frac32-\frac{2\alpha}{3}}\le 2^{j_1}\le N^{\frac{\alpha}{3}}}\sum_{l_1\sim 2^{j_1}}\sum_{j_2:\;2^{j_2}\le N^{3-\frac{5\alpha}{3}}}\sum_{l_2\sim 2^{j_2}}(\frac{N}{M2^{j_1}})^6N^2(N^{\frac{2\alpha}{3}-1})^2\sum_{p\le M^2}(\sum_{a:\;|a-p|\lesssim 2^{j_1}}I_{a}^{1/3})^6.$$
We use Cauchy--Schwarz for the last expression to dominate the above by
$$N^{\epsilon}\sum_{j:\;N^{\frac32-\frac{2\alpha}{3}}\le 2^{j}\le N^{\frac{\alpha}{3}}}2^{j}N^{3-\frac{5\alpha}{3}}2^{-6j}N^{2\alpha}N^2(N^{\frac{2\alpha}{3}-1})^22^{j}2^{3j}\sum_{|H|\sim 2^{j}}(\sum_{a\in H}I_{a}^{2/3})^3$$$$=N^{3+\frac{5\alpha}{3}+\epsilon}\sum_{j:\;N^{\frac32-\frac{2\alpha}{3}}\le 2^{j}\le N^{\frac{\alpha}{3}}}2^{-j}\sum_{|H|\sim 2^{j}}(\sum_{a\in H}I_{a}^{2/3})^3.$$
Using Lemma \ref{76}, this is dominated by
$$N^{3+\frac{5\alpha}{3}+\epsilon}\sum_{j:\;N^{\frac32-\frac{2\alpha}{3}}\le 2^{j}\le N^{\frac{\alpha}{3}}}(M^42^{j}+M^62^{-j})\lesssim N^{\epsilon}(N^{7+\frac{2\alpha}{3}}+N^{\frac{15}{2}+\frac{\alpha}{3}}).$$
This is $O(N^{7+\frac{2\alpha}{3}+\epsilon})$, as desired, since $\alpha>\frac{3}{2}$.
\bigskip

\section{The case $\frac95\le \alpha<2$}
\label{s8}
Let $$\Omega=[0,N^\delta]\times [0,N]\times [N^{\frac45},N^{\alpha-1}]\times [0,N^{\beta-1}].$$
Because of the case addressed in the previous section, we may assume $x_3\ge N^{\frac45}$. This will buy us  some extra flexibility in choosing $\delta$. In fact, we can work with  any $\delta$ satisfying
\begin{equation}
\label{83}
2-\frac{3\beta}{2}\le  \delta\le \frac95-\beta.\end{equation}
We need to prove that
$$\int_{\Omega}|\Ec_{I_1}\Ec_{I_2}|^6\lesssim_\epsilon N^{8+\delta+\epsilon}.$$
We will first decouple on cubes $B$ with side length $N^{\beta-1}$. This is the largest size that is available to us, due to the range in the $x_4$ variable.  Unlike the case from the previous section, the resulting intervals are not small enough to make the cubic terms negligible, to allow us to use estimates for quadratic Weyl sums. We will accomplish that by means of a further decoupling, on cubes of side length $N^\delta$, similar to the case $\alpha=2$ described earlier.

\smallskip

To get started, we use $l^2$ decoupling (Theorem \ref{49}) on each cube $B$ of side length $N^{\beta-1}$
$$\int_B|\Ec_{I_1}\Ec_{I_2}|^6\lesssim_\epsilon N^\epsilon (\sum_{J_1\subset I_1}\sum_{J_2\subset I_2}(\int_B|\Ec_{J_1}\Ec_{J_2}|^6)^{1/3})^3,$$
where $J_i=[h_i,h_i+M]$ has length  $M=N^{1-\frac\beta2}$.

Next, we cover $\Omega$ with boxes
$$\Delta=[0,N^\delta]\times [pN^\delta,(p+1)N^{\delta}]\times [lN^{\delta},(l+1)N^\delta]\times [0,N^{\beta-1}],$$
with $p\le N^{1-\delta}$, $N^{\frac45-\delta}\le l\le N^{\alpha-1-\delta}$. If we sum up the above inequality over cubes  $B\subset \Delta$ and use Minkowski's inequality, we find
\begin{equation}
\label{86}
\int_\Delta|\Ec_{I_1}\Ec_{I_2}|^6\lesssim_\epsilon N^\epsilon (\sum_{J_1\subset I_1}\sum_{J_2\subset I_2}(\sum_{B\subset \Delta}\int_B|\Ec_{J_1}\Ec_{J_2}|^6)^{1/3})^3.
\end{equation}
Next, we fix $J_1,J_2$ and perform a second decoupling for the term $\int_\Delta|\Ec_{J_1}\Ec_{J_2}|^6$. We proceed as in Section \ref{Bsarg}

\begin{align*}
\int_{B}|\Ec_{J_1}\Ec_{J_2}|^6&=N^{-4}\int_{NB}|\Ec_{J_1}(\frac{\cdot}{N})\Ec_{J_2}(\frac{\cdot}{N})|^6\\&\lesssim N^{-4-4\beta}\int_{NB}|\Ec_{J_1}(\frac{\cdot}{N})|^6\int_{NB}|\Ec_{J_2}(\frac{\cdot}{N})|^6=N^{4-4\beta}\int_{B}|\Ec_{J_1}|^6\int_{B}|\Ec_{J_2}|^6.
\end{align*}
Then
$$\sum_{B\subset \Delta}\int_{B}|\Ec_{J_1}|^6\int_{B}|\Ec_{J_2}|^6\lesssim N^{4-4\beta}\int_{\Delta}dx\int_{(y,z)\in [0,N^{\beta-1}]^4\times [0,N^{\beta-1}]^4}|\Ec_{J_1}(x+y)\Ec_{J_2}(x+z)|^6dydz.$$
Combining these two and using periodicity in the $y_1,z_1$ variables we get
$$\int_{\Delta}|\Ec_{J_1}\Ec_{J_2}|^6\lesssim N^{6-6\beta-2\delta}\times$$
$$\int_{(x_1,x_4,y_2,y_3,y_4,z_2,z_3,z_4)\in S}dx_1\ldots dz_4\int_{y_1,z_1\in[0,N^\delta]\atop{x_2\in[pN^\delta,(p+1)N^\delta]\atop{x_3\in[lN^\delta,(l+1)N^\delta]}}}|\Ec_{J_1}(x+y)\Ec_{J_2}(x+z)|^6dy_1dz_1dx_2dx_3,$$
where $S$ is characterized by
$$0\le x_1\in [0, N^\delta],\;x_4,y_2,y_3,y_4,z_2,z_3,z_4\in [0,N^{\beta-1}].$$
We seek to estimate the second integral uniformly over $x_1,x_4,y_2,y_3,y_4,z_2,z_3,z_4$. With these variables fixed,
we make the affine change of variables $(y_1,z_1,x_2,x_3)\mapsto (u_1,u_2,w_1,w_2)$
$$
\begin{cases}u_1=(y_1+x_1)+\frac{2h_1}{N}(x_2+y_2)+\phi_3'(\frac{h_1}{N})(x_3+y_3)+\phi_4'(\frac{h_1}{N})(x_4+y_4)\\u_2=(z_1+x_1)+\frac{2h_2}{N}(x_2+z_2)+\phi_3'(\frac{h_2}{N})(x_3+z_3)+\phi_4'(\frac{h_2}{N})(x_4+y_4) \\w_1=\frac{x_2}{N}+\phi_3''(\frac{h_1}{N})\frac{x_3}{2N}\\w_2=\frac{x_2}{N}+\phi_3''(\frac{h_2}{N})\frac{x_3}{2N}\end{cases}.
$$
The  Jacobian is $\sim \frac1{N^2}$, due to \eqref{3}. The second integral is comparable to
\begin{equation}
\label{81}
 N^2\int_{(u_i,w_i)\in[0,N^\delta]\times [\frac{a_i-O(1)}{M_*^2},\frac{a_i+O(1)}{M_*^2}]}\prod_{i=1}^2|\sum_{m_i\le M}e(m_iu_i+m_i^2w_i+\eta_i(m_i)x_3)|^6du_1dw_1du_2dw_2.
\end{equation}
Here $M_*=N^{\frac12-\frac\delta2}$,
$a_i=p+l\frac{\phi_3''(\frac{h_i}{N})}{2}$ and $\eta_i(m)={m^3}\frac{\phi_3'''(\frac{h_i}{N})}{3!N^2}+{m^4}\frac{\phi_3''''(\frac{h_i}{N})}{4!N^3}+\ldots$. Note that since $\frac{v}N=O(M^{-2})$ for $v$ equal to any of the variables $x_4,y_2,y_3,y_4,z_2,z_3,z_4$, we have dismissed the contribution  of these variables associated with quadratic (as well as the higher order) terms. See Lemma \ref{22}.

We may apply again Theorem \ref{65}, using that $x_3=A(w_1-w_2)$ with $A=O(N)$. Note however that this time we cannot decouple into point masses (as in \eqref{59}), since $M_*$ is significantly larger than 1. Instead, applying \eqref{80} with $N=(\frac{M}{M_*})^2$ we dominate \eqref{81} by
\begin{equation}
\label{101}
N^{2+\epsilon}\times
\end{equation}
$$(\sum_{J_1',J_2'}[\int_{(u_i,w_i)\in [0,N^\delta]\times [\frac{a_i-O(1)}{M_*^2},\frac{a_i+O(1)}{M_*^2}]}\prod_{i=1}^2|\sum_{m_i\in J_i'}e(m_iu_i+m_i^2w_i+\eta_i(m_i)x_3)|^6du_1dw_1du_2dw_2]^\frac13)^3.
$$
The intervals $J_i'$ partitioning $[1,M]$ have length $M_*$. What we have gained by doing this decoupling is that, when $m_i$ is confined to a small interval $J_i'=[h_i',h_i'+M_*]$, the contribution of the term
\begin{equation}
\label{yqp[sq[ps]]}
\eta_i(m_i)=\eta_i(h_i'+m_i')=\eta_i(h_i')+\eta_i'(h_i')m_i'+\eta_i''(h_i')\frac{(m_i')^2}{2}+O(\frac{(m_i')^3}{N^2})
\end{equation}
 can be neglected. To see this, note first that
$$\eta_i''(h_i')=\sum_{n\ge 3}\phi_3^{(n)}(\frac{h_i}{N})\frac{(h_i')^{n-2}}{N^{n-1}(n-2)!}.$$
Making another linear change of variables such that
$$w_i'=w_i+\frac{\eta_i''(h_i')}{2}A(w_1-w_2),$$
we write, using that $|x_3|\le N^{\alpha-1}$
$$\prod_{i=1}^2|\sum_{m_i\in J_i'}e(m_iu_i+m_i^2w_i+\eta_i(m_i)x_3)|=\prod_{i=1}^2|\sum_{m_i'\in [1,M_*]}e(m_i'u_i'+(m_i')^2w_i'+O((m_i')^3\frac{N^{\alpha-1}}{N^2}))|.$$
The range of $w_i'$ is (a subset of) $[\frac{a_i'-O(1)}{M_*^2},\frac{a_i'+O(1)}{M_*^2}]$, where
$$a_i'=p+\frac{l}{2}\sum_{n\ge 2}\phi_3^{(n)}(\frac{h_i}{N})\frac{(h_i')^{n-2}}{N^{n-2}(n-2)!}=p+l\frac{\phi_3^{''}(\frac{h_i+h_i'}{N})}{2}.$$
Since $\alpha-1-\frac{\beta}{2}\le  \delta$ by \eqref{83}, we have that $a_i-a_i'=O(1)$.
Thus, the  quadratic term in \eqref{yqp[sq[ps]]} will not affect the domain of integration. Moreover, the contribution of the higher order terms in \eqref{yqp[sq[ps]]} is negligible (cf. Lemma \ref{22}), as long as we can guarantee that   we have
$\frac{N^{\alpha-1}}{N^2}=O(M_*^{-3})$. This is equivalent to
$\delta\ge 1-\frac{2\beta}3$, and follows from \eqref{83} and the fact that $\beta\le \frac{6}{5}$.
Under this assumption, we dominate \eqref{101} by
$$N^{2+\epsilon}\left(\sum_{J_1',J_2'}\left[\prod_{i=1}^2\int_{[0,N^\delta]\times [\frac{a_i-O(1)}{M_*^2},\frac{a_i+O(1)}{M_*^2}]}|\sum_{m_i\in J_i'}e(m_iu_i+m_i^2w_i)|^6du_idw_i\right]^\frac13\right)^3.$$
This is $\sim N^{2+2\delta+\epsilon}(\frac{M}{M_*})^6I_{a_1}I_{a_2}$, where
$$I_{a}=\int_{[0,1]\times [\frac{a-O(1)}{M_*^2},\frac{a+O(1)}{M_*^2}]}|\sum_{m\le M_*}e(mu+m^2w)|^6dudw$$
is independent of $J_1',J_2'$. Recall that
\begin{equation}
\label{85}
\begin{cases}a_1=p+l\frac{\phi_3''(\frac{h_1}{N})}{2}\\a_2=p+l\frac{\phi_3''(\frac{h_2}{N})}{2}
\end{cases}.
\end{equation}
Assume now that $l\sim 2^j$, with $$N^{\frac45-\delta}\lesssim 2^j\lesssim N^{\alpha-1-\delta}.$$
For fixed $p,l$, and fixed $(a_1,a_2)$ (within a factor of  $O(1)$), the system \eqref{85} has $\lesssim (\frac{N}{2^jM})^2$ solutions $(h_1,h_2)$. Getting back to \eqref{86}, summing over $\Delta\subset \Omega$ we find that
$$\int_\Omega|\Ec_{I_1}\Ec_{I_2}|^6\lesssim_\epsilon$$$$ N^{6-6\beta-2\delta}|S|N^{2+2\delta+\epsilon} N^{3+3\delta}\sum_{p\le M_*^2}\sum_{N^{\frac45-\delta}\lesssim 2^j\lesssim N^{\alpha-1-\delta}}2^{-6j}\sum_{l\sim 2^j}(\sum_{|a-p|\lesssim 2^j}I_a^{1/3})^6.$$
We use  Cauchy--Schwarz to dominate this by
$$N^{4+4\delta+\beta+\epsilon} \sum_{N^{\frac45-\delta}\lesssim 2^j\lesssim N^{\alpha-1-\delta}}\sum_{H\subset [1,M_*^2]\atop{|H|=2^j}}2^{-j}(\sum_{a\in H}I_a^{2/3})^3.$$
Using Lemma \ref{76}, it remains to check that
$$ \sum_{N^{\frac45-\delta}\lesssim 2^j\lesssim N^{\alpha-1-\delta}}M_*^42^j+      \sum_{N^{\frac45-\delta}\lesssim 2^j\lesssim N^{\alpha-1-\delta}} M_*^62^{-j}\lesssim N^{4-\beta-3\delta}.$$
The first sum is in order, since $\alpha+\beta=3$. So is the second sum, as long as $\delta\le \frac{9}{5}-\beta$, which is guaranteed by \eqref{83}.

\section{Proof of Theorem \ref{4linver}}
\label{oriruigui}

This  section shows that Theorem \ref{4} implies Theorem \ref{4linver}. The argument is inspired by \cite{Bo}.
\medskip

The parameter $K$ will be very large and universal, independent of $N$, $\phi_k$. The larger the $K$ we choose to work with, the smaller the $\epsilon$ from the $N^\epsilon$ loss will be at the end of the section.

\begin{pr}
	\label{9}
Assume $\alpha+\beta=3$ and $\frac32\le \alpha\le 2.$
Assume $\phi_3,\phi_4: (0,3)\to\R$ are real analytic and satisfy \eqref{1}, \eqref{2} and \eqref{3}. Let as before $\omega_3=[0,N^\alpha]$, $\omega_4=[0,N^\beta]$ and
	$$\Ec_{I,N}(x)=\sum_{n\in I}e(nx_1+n^2x_2+\phi_3(\frac{n}{N})x_3+\phi_4(\frac{n}{N})x_4).$$
	
	We consider arbitrary integers $N_0,M$ satisfying  $1\le M\le \frac{N_0}{K}$ and $N_0+[M,2M]\subset [\frac{N}{2},N]$.
	
	Let $H_1,H_2$ be intervals of length $\frac{M}K$ inside $N_0+[M,2M]$ such that $\dist(H_1,H_2)\ge \frac{M}{K}$. Then
	$$\int_{[0,1]\times [0,1]\times \omega_3\times \omega_4}|\Ec_{H_1,N}(x)\Ec_{H_2,N}(x)|^6\lesssim_\epsilon N^{7}M^{2+\epsilon}.$$	
\end{pr}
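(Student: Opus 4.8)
The plan is to deduce Proposition \ref{9} from Theorem \ref{4} by a rescaling that maps the short arcs $H_1,H_2$ of length $\frac{M}{K}$ inside $N_0+[M,2M]$ to intervals of length $\sim N'$ inside $[\frac{N'}{2},N']$ for a new scale $N'\sim K$, at the cost of replacing $\phi_3,\phi_4$ by new real analytic functions that still satisfy \eqref{1}, \eqref{2}, \eqref{3} with comparable constants. Concretely, writing $n=N_0+Mr$ with $r$ ranging over an interval of length $\frac1K$, one expands $n=N_0+Mr$, $n^2=N_0^2+2N_0Mr+M^2r^2$, and
$$\phi_k(\tfrac{n}{N})=\phi_k\!\left(\tfrac{N_0}{N}+\tfrac{M}{N}r\right).$$
After absorbing the terms that are linear or constant in $r$ into a linear change of the spatial variables $x_1,x_2,x_3,x_4$ (which is unimodular up to the factor coming from the $\det$ in \eqref{2} and \eqref{3}, hence has Jacobian $\sim 1$ after the obvious renormalizations), the phase becomes, up to genuinely negligible errors, the phase of an $\Ec$-type sum of length $\sim N'=K$ with new profile functions built from $\phi_3,\phi_4$. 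The key algebraic point, exactly as in the discussion around \eqref{uy4tc674t7cyrfu}, is that the $r^2$ term from $n^2$ and the second-order Taylor terms of $\phi_k$ conspire to give an honest nondegenerate curve at scale $N'$; here one uses $\frac{M}{N}\le \frac1K$ together with $N_0/N\in[\frac12,1]$ to keep all derivative bounds and the determinant condition under control uniformly.

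The main steps, in order, are: (i) perform the substitution $n=N_0+Mr$ and Taylor expand $n^2$ and $\phi_k(\frac{n}{N})$ to second order in $r$, tracking the size of the remainder; (ii) make the affine change of variables in $(x_1,x_2,x_3,x_4)$ that kills the terms of degree $\le 1$ in $r$, computing the Jacobian and the new ranges of the spatial variables --- the ranges $[0,1]\times[0,1]\times[0,N^\alpha]\times[0,N^\beta]$ must be checked to contain (after rescaling) a box of the shape needed to apply Theorem \ref{4} at scale $N'$, namely $[0,1]^2\times[0,(N')^\alpha]\times[0,(N')^\beta]$, and this is where the hypothesis $M\le N_0/K$ and the precise exponents $\alpha,\beta$ enter; (iii) verify that the higher-order terms in $r$ (both from $n^j$, $j\ge 3$, which vanish here since $n^2$ is exactly quadratic, and from the Taylor tails of $\phi_k$) contribute $O(1)$ to the phase on the relevant domain, so they may be discarded via Lemma \ref{22}; (iv) check conditions \eqref{1}, \eqref{2}, \eqref{3} for the rescaled profiles with constants $A_i\sim 1$ uniformly; (v) apply Theorem \ref{4} (with $N$ there equal to $N'\sim K$ and with the two arcs being the images of $H_1,H_2$, which are separated by $\sim N'$ since $\dist(H_1,H_2)\ge\frac MK$) to get the bound $\lesssim_\epsilon (N')^{9+\epsilon}$, and finally (vi) undo the change of variables, collecting the Jacobian factors and the factors $M$, $N$ produced by the renormalization of $x_2,x_3,x_4$, to arrive at $N^7 M^{2+\epsilon}$; the power of $K$ will be absorbed into the implicit constant (this is the sentence ``the larger $K$, the smaller $\epsilon$'').

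The step I expect to be the main obstacle is (ii) together with the bookkeeping in (vi): one must be careful that, when the spatial variable $x_3$ (resp. $x_4$) is renormalized by the substitution that turns $\phi_k(\frac{n}{N})x_k$ into $\phi_{k,\mathrm{new}}(\frac{r}{\cdot})\cdot(\text{new }x_k)$, the new range of that variable is \emph{exactly} $[0,(N')^\alpha]$ (resp. $[0,(N')^\beta]$) and not something larger, since Theorem \ref{4} is sharp and an oversized domain would break it. Tracking how the factor $\frac{M}{N}$ (the ratio governing the Taylor expansion) interacts with the requirement $\alpha+\beta=3$ is precisely what forces the clean final exponent: the $\omega_3\times\omega_4$ measure at scale $N$ against the rescaled measure at scale $N'$ produces exactly the powers of $N$ and $M$ claimed. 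A secondary technical point is ensuring the remainder in (iii) is uniformly $O(1)$; this follows from \eqref{1} applied on the interval $(0,3)$ containing $[\frac12,1]$, since $\frac{N_0}{N}+\frac{M}{N}r$ stays in a fixed compact subinterval, but it should be stated carefully so that the invocation of Lemma \ref{22} is legitimate.
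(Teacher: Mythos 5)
Your overall strategy (shift by $N_0$, Taylor expand around $\frac{N_0}{N}$, absorb the constant and linear terms into an affine change of the spatial variables, then invoke Theorem \ref{4} at a rescaled level) is the same as the paper's, but two of your steps are wrong as stated, and they are fatal. First, the new scale is not $N'\sim K$. The arcs $H_i$ contain $\sim M/K$ integers, and no change of variables changes the number of terms in the sum; after writing $n=N_0+m$ the bilinear estimate must be applied at scale $M$, with the intervals $I_i=H_i-N_0\subset[M,2M]$ of length $\frac{M}{K}\sim_K M$ and separation $\gtrsim_K M$. Applying Theorem \ref{4} at scale $O(K)=O(1)$ carries no cancellation among the $\sim M/K$ terms, and the Jacobian bookkeeping cannot manufacture the factor $M^{2+\epsilon}$; with the correct scale the arithmetic is $(\frac{N}{M})^{7}\cdot M^{9+\epsilon}=N^{7}M^{2+\epsilon}$, where $(\frac{M}{N})^{7}$ is the Jacobian of the map whose last two components are $y_3=(\frac{M}{N})^{3}(x_3+\frac{\phi_4^{(3)}(N_0/N)}{\phi_3^{(3)}(N_0/N)}x_4)$, $y_4=(\frac{M}{N})^{4}x_4$, and which sends $\omega_3\times\omega_4$ into $[-M^\alpha,M^\alpha]\times[-M^\beta,M^\beta]$ precisely because $M\le N$ (your concern in (ii) is resolved this way, by containment rather than equality).

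Second, your step (iii) is false and removes exactly the structure the proposition is about. The cubic and quartic Taylor tails of $\phi_k(\frac{N_0+m}{N})$, paired with $x_3\in[0,N^\alpha]$ and $x_4\in[0,N^\beta]$, have size up to $M^{3}N^{\alpha-3}$ and $M^{4}N^{\beta-4}$, which for $M$ comparable to $\frac{N}{K}$ are of order $N^{\alpha}K^{-3}$ and $N^{\beta}K^{-4}$, nowhere near $O(1)$, so Lemma \ref{22} does not let you discard them. They must be kept and repackaged as the rescaled profiles $\tilde\phi_3(t)=\sum_{n\ge3}\frac{\phi_3^{(n)}(N_0/N)}{n!}(\frac{M}{N})^{n-3}t^{n}$ and a companion $\tilde\phi_4$ obtained after subtracting from the expansion of $\phi_4$ the multiple $\frac{\phi_4^{(3)}(N_0/N)}{\phi_3^{(3)}(N_0/N)}(\frac{M}{N})^{3}\tilde\phi_3$ (this is where hypothesis \eqref{3} is used, so that the subtracted piece can be absorbed into the $y_3$ variable), after which one verifies \eqref{1}, \eqref{2}, \eqref{3} for $(\tilde\phi_3,\tilde\phi_4)$ with uniform constants for $K$ large. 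If instead you drop these tails, the phase in $m$ is purely quadratic and $x_3,x_4$ enter only linearly, so the nondegeneracy needed for Theorem \ref{4} is void and the best available bound is roughly the measure $N^{3}$ times a parabola moment of order $M^{9}$, which exceeds $N^{7}M^{2}$ for large $M$. So the proposal needs to be repaired by (a) taking the new scale to be $M$ and (b) retaining the higher-order Taylor terms as the new pair of profile functions, checking \eqref{1}--\eqref{3} for them.
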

\begin{proof}
	Write $H_1=N_0+I_1$, $H_2=N_0+I_2$ with $I_1,I_2$ intervals of length $\frac{M}K$ inside $[M,2M]$ and with separation $\ge \frac{M}{K}$.
	We use the following expansion, certainly valid for all $m$ in $I_i$.
	\begin{align*}
	\phi_3(\frac{N_0+m}{N})&=Q_3(m)+\sum_{n\ge 3}\frac{\phi_3^{(n)}(\frac{N_0}{N})}{n!}(\frac{m}{N})^n\\&=Q_3(m)+(\frac{M}{N})^3\sum_{n\ge 3}\frac{\phi_3^{(n)}(\frac{N_0}{N})(\frac{M}{N})^{n-3}}{n!}(\frac{m}{M})^n.
	\end{align*}
	Here $Q_3(m)=A+Bm+Cm^2$ with $B=O(\frac1N)$, $C=O(\frac1{N^2})$.
	We introduce the analogue $\tilde{\phi_3}$ of $\phi_3$ at scale $M$
	$$\tilde{\phi_3}(t)=\sum_{n\ge 3}\frac{\phi_3^{(n)}(\frac{N_0}{N})(\frac{M}{N})^{n-3}}{n!}t^n.$$
	This series is convergent as long as $\frac{N_0}{N}+t\in (0,3)$, so the new function is certainly real analytic on $(0,2)$, since $N_0\le N$.
	
	Let $\delta>0$ be conveniently small.  By choosing $K$ large enough we can make $\frac{M}{N}$ as small as we wish, so we may guarantee that for each $t\in[\frac12,1]$  we have
	\begin{equation}
	\label{6}
	|\tilde{\phi}_3^{(3)}(t)-{\phi}_3^{(3)}(\frac{N_0}{N})|\le \delta.
	\end{equation}
	Thus, we can guarantee \eqref{3} for  $\tilde{\phi}_3$, with a slightly smaller, but uniform value of $A_4$. The same will  work with \eqref{1} and \eqref{2}, as it will soon become clear. To this end, we may also enforce
	\begin{equation}
	\label{7}
	|\tilde{\phi}_3^{(4)}(t)|\le \delta.
	\end{equation}

	We also define, with $Q_4(m)=D+Em+Fm^2$ satisfying $E=O(\frac1N)$, $F=O(\frac1N^{2})$
	\begin{align*}
	\phi_4(\frac{N_0+m}{N})&=Q_4(m)+\sum_{n\ge 3}\frac{\phi_4^{(n)}(\frac{N_0}{N})}{n!}(\frac{m}{N})^n\\&=Q_4(m)+\frac{\phi_4^{(3)}(\frac{N_0}{N})}{3!}(\frac{m}{N})^3+(\frac{M}{N})^4\sum_{n\ge 4}\frac{\phi_4^{(n)}(\frac{N_0}{N})(\frac{M}{N})^{n-4}}{n!}(\frac{m}{M})^n.\end{align*}
	The last two terms are equal to
	$$
	\frac{\phi_4^{(3)}(\frac{N_0}{N})}{\phi_3^{(3)}(\frac{N_0}{N})}(\frac{M}{N})^3\tilde{\phi}_3(\frac{m}{M})+(\frac{M}{N})^4\sum_{n\ge 4}
	\frac{\phi_4^{(n)}(\frac{N_0}{N})\phi_3^{(3)}(\frac{N_0}{N})-\phi_4^{(3)}(\frac{N_0}{N})\phi_3^{(n)}(\frac{N_0}{N})}{\phi_3^{(3)}(\frac{N_0}{N})n!}(\frac{M}{N})^{n-4}(\frac{m}{M})^n.$$
	Let $\tilde{\phi}_4$ be the analogue of $\phi_4$ at scale $M$ defined by
	$$
	\tilde{\phi}_4(t)=\sum_{n\ge 4}
	\frac{\phi_4^{(n)}(\frac{N_0}{N})\phi_3^{(3)}(\frac{N_0}{N})-\phi_4^{(3)}(\frac{N_0}{N})\phi_3^{(n)}(\frac{N_0}{N})}{\phi_3^{(3)}(\frac{N_0}{N})n!}(\frac{M}{N})^{n-4}t^n.
	$$
	As before, by choosing $K$ large enough, we can arrange that for all $t\in [\frac12,1]$ $$|\tilde{\phi}_4^{(4)}(t)-\frac{\phi_4^{(4)}(\frac{N_0}{N})\phi_3^{(3)}(\frac{N_0}{N})-\phi_4^{(3)}(\frac{N_0}{N})\phi_3^{(4)}(\frac{N_0}{N})}{\phi_3^{(3)}(\frac{N_0}{N})}|\le \delta.$$
	Combining this with \eqref{6} and \eqref{7} we may arrange that
	$$\det
	\begin{bmatrix}
	\tilde{\phi}_3^{(3)}(t)&\tilde{\phi}_3^{(4)}(t)\\ \tilde{\phi}_4^{(3)}(s)&\tilde{\phi}_4^{(4)}(s)\end{bmatrix}-\det
	\begin{bmatrix}
	\phi_3^{(3)}(\frac{N_0}{N})&\phi_3^{(4)}(\frac{N_0}{N})\\ \phi_4^{(3)}(\frac{N_0}{N})&\phi_4^{(4)}(\frac{N_0}{N})\end{bmatrix}$$
	is as small in absolute value as we wish, uniformly over $t,s\in[\frac12,1]$. In particular, we can guarantee \eqref{2} for the pair $(\tilde{\phi}_3,\tilde{\phi}_4)$, with slightly modified, but uniform values of $A_2,A_3$. Similar comments apply regarding \eqref{1}.
	
	Now
	\begin{align*}
	|\Ec_{H_k,N}(x)|&=|\sum_{m\in I_k}e(mx_1+(m^2+2N_0m)x_2+\phi_3(\frac{N_0+m}{N})x_3+\phi_4(\frac{N_0+m}{N})x_4)|\\&=|\sum_{m\in I_i}e(m(x_1+2N_0x_2+Bx_3+Ex_4)+m^2(x_2+Cx_3+Fx_4)\\&+(\frac{M}{N})^3\tilde{\phi}_3(\frac{m}{M})(x_3+\frac{\phi_4^{(3)}(\frac{N_0}{N})}{\phi_3^{(3)}(\frac{N_0}{N})}x_4)\\&+(\frac{M}{N})^4\tilde{\phi}_4(\frac{m}{M})x_4)|.
	\end{align*}
	Recall $N_0\sim N$, $B,E=O(1/N)$, $C,F=O(1/N^2)$.
	We make the change of variables
	$$\begin{cases}
	y_1=x_1+2N_0x_2+Bx_3+Ex_4\\y_2=x_2+Cx_3+Fx_4\\y_3=(\frac{M}{N})^3(x_3+\frac{\phi_4^{(3)}(\frac{N_0}{N})}{\phi_3^{(3)}(\frac{N_0}{N})}x_4)\\y_4=(\frac{M}{N})^4x_4
	\end{cases}.$$
	Due to  periodicity, we may extend the range of $x_1$ to $[0,N_0]$. This linear transformation maps $[0,N_0]\times [0,1]\times \omega_3\times \omega_4$ to a subset of a box $\tilde{\omega}_1\times\tilde{\omega}_2\times\tilde{\omega}_3\times\tilde{\omega_4}$ centered at the origin, with dimensions roughly $N_0,1, M^3N^{\alpha-3}, M^4N^{\beta-4}$.
	
	Thus
	$$|\Ec_{H_k,N}(x)|=|\Ec_{I_k,M}(y)|$$
	where
	$$\Ec_{I_k,M}(y)=\sum_{m\in I_k}e(my_1+m^2y_2+\tilde{\phi}_3(\frac{m}{M})y_3+\tilde{\phi}_4(\frac{m}{M})y_4).$$
	We may write, using again periodicity in $y_1$ and $y_2$
	\begin{align*}
	\int_{[0,1]\times [0,1]\times \omega_3\times \omega_4}|\Ec_{H_1,N}(x)\Ec_{H_2,N}(x)|^6&=\frac1{N_0}\int_{[0,N_0]\times [0,1]\times \omega_3\times \omega_4}|\Ec_{H_1,N}(x)\Ec_{H_2,N}(x)|^6\\&\le(\frac{N}{M})^7\int_{[0,1]\times[0,1]\times\tilde{\omega}_3\times\tilde{\omega_4}}|\Ec_{I_1,M}(y)\Ec_{I_2,M}(y)|^6.
	\end{align*}
	Finally, we use Theorem \ref{4} with $N=M$, noting  that  $\tilde{\omega}_3\subset [-M^\alpha,M^\alpha]$	and $\tilde{\omega}_4\subset [-M^\beta,M^\beta]$, to estimate the last expression by
	$$(\frac{N}{M})^7M^{9+\epsilon}=N^7M^{2+\epsilon}.$$
\end{proof}	
\smallskip

We can now prove  Theorem \ref{4linver}.

	Choose $K$ large enough, depending on $\epsilon$.	
	Write $\Hc_n(I)$ for the collection of dyadic intervals in $I$ with length $\frac{N}{2K^n}$. We write $H_1\not\sim H_2$ to imply that $H_1,H_2$ are not neighbors. Then
	$$|\Ec_{I,N}(x)|\le 3\max_{H\in \Hc_1(I)}|\Ec_{H,N}(x)|+K^{10}\max_{H_1\not\sim H_2\in\Hc_1(I)}|\Ec_{H_1,N}(x)\Ec_{H_2,N}(x)|^{1/2}.$$
	We repeat this inequality until we reach intervals in $\Hc_{l}$ of length $\sim 1$,  that is  $K^l\sim N$.	We have
	\begin{align*}
	|\Ec_{I,N}(x)|&\lesssim 3^l+l3^lK^{10}\max_{1\le n\le l}\max_{H\in\Hc_n(I)}\max_{H_1\not\sim H_2\in\Hc_{n+1}(H)}|\Ec_{H_1,N}(x)\Ec_{H_2,N}(x)|^{1/2}\\&\lesssim (\log N)N^{\log_K3}\max_{1\le n\le l}\max_{H\in\Hc_n(I)}\max_{H_1\not\sim H_2\in\Hc_{n+1}(H)}|\Ec_{H_1,N}(x)\Ec_{H_2,N}(x)|^{1/2}.
	\end{align*}
	Using Corollary \ref{9} we finish the proof
	\begin{align*}
	\int_{[0,1]\times [0,1]\times \omega_3\times \omega_4}& |\Ec_{I,N}|^{12}\\&\lesssim_KN^{\log_K 3}\sum_{n}\sum_{H\in\Hc_n(I)}\max_{H_1\not\sim H_2\in\Hc_{n+1}(H)}\int_{[0,1]\times [0,1]\times \omega_3\times \omega_4}|\Ec_{H_1,N}(x)\Ec_{H_2,N}(x)|^{6}\\&\lesssim_{K,\epsilon}N^{\epsilon+\log_K 3}\sum_n{K^n}N^7(\frac{N}{K^n})^2\\&\lesssim_{K,\epsilon}N^{\epsilon+\log_K 3}N^9.
	\end{align*}
	Choosing $K$ large enough, we may make $\log_K 3$ as small as we wish.

\section{Other values of $p$}
\label{so}
The reason Theorem \ref{4linver} was accessible via the bilinear result in Theorem \ref{4} has to do with the fact that  $6$ is the critical exponent for the decoupling for the parabola (at the canonical scale). Thus, our arguments rely fundamentally on this dimensional reduction. In \cite{DGW}, the small cap decoupling for the parabola is settled, and the associated critical exponents lie between 4 and 6. In principle, this new tool can be used to determine $L^p$ moments for curves in $\R^4$, in the range $8\le p\le 12$.

There are many possible things to consider in this direction. One is the following extension of Conjecture \ref{51},  that we use to illustrate a different type of obstruction that appears in this regime. This was  observed in \cite{Bo34}, in a related context.
\begin{con}[Square root cancellation in $L^{p}$]
	\label{24}
	Let $11\le p\le 12$. Assume $\alpha\ge \beta\ge 0$ satisfy $\alpha+\beta=\frac{p}2-3$.
	Let $\phi_3$, $\phi_4$, $\omega_3$, $\omega_4$ be as in Theorem \ref{4linver}. Then
	$$\int_{[0,1]\times [0,1]\times \omega_3\times \omega_4} |\Ec_{[\frac{N}{2},N],N}|^{p}\lesssim_\epsilon N^{p-3+\epsilon}.$$
\end{con}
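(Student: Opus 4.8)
The plan is to adapt the entire architecture of the $p=12$ case, substituting small cap decoupling for the parabola (in the sense of \cite{DGW}) for the canonical $\ell^2(L^6)$ decoupling, since the relevant bilinear exponent $p/2$ now lies in the subcritical-to-critical range $[\tfrac{11}{2},6]$.

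\emph{Bilinearization.} First I would reduce to a bilinear estimate exactly as in Section \ref{oriruigui}. The broad--narrow splitting deducing Theorem \ref{4linver} from Theorem \ref{4} uses only the positivity of $|\Ec_{I,N}|$ and the affine rescaling $\phi_k\mapsto\tilde\phi_k$ at scale $M$, neither of which sees the exponent, and the geometric series in $K$ still converges since $p/2>2$. Hence Proposition \ref{9} has a verbatim analogue, and it suffices to prove, for transversal intervals $I_1,I_2$ of length $\sim N$ in $[\tfrac{N}{2},N]$,
\begin{equation*}
\int_{[0,1]\times[0,1]\times\omega_3\times\omega_4}|\Ec_{I_1,N}(x)\Ec_{I_2,N}(x)|^{p/2}\,dx\lesssim_\epsilon N^{p-3+\epsilon},
\end{equation*}
the target exponent being forced by $|[0,1]^2\times\omega_3\times\omega_4|\,(\sqrt N)^{p}=N^{p/2-3}N^{p/2}=N^{p-3}$.

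\emph{The bilinear estimate.} Here I would keep the stratification into cases of Sections \ref{s6}--\ref{s8}, organised by the sizes of the four coordinate ranges, decoupling on cubes $B$ of the smallest side length that both renders the cubic and higher Taylor terms negligible (Lemma \ref{22}) and does not overflow $\Omega$ in the shortest coordinate; when $\alpha$ is near $2$ a secondary decoupling on larger cubes $\Delta$, as in Section \ref{s8}, remains necessary. The two genuinely new ingredients: inside Theorem \ref{65}, replace the two-fold use of parabola decoupling by a two-fold use of small cap decoupling for the parabola, which groups the frequencies into arcs $J_i$ of an intermediate length $N^{-\sigma}$ with $\sigma<\tfrac{1}{2}$, chosen so that the number-of-arcs loss is exactly what $L^{p/2}$ with $p/2<6$ can absorb; and, after the change of variables of Lemma \ref{18}, estimate the resulting product of two \emph{quadratic} Weyl sums $\sum_{m\in J_i}e(m u_i+m^2w_i)$ over short $w_i$-windows using the sharp subcritical moments ($\lesssim_\epsilon M^{q/2+\epsilon}$ on $[0,1]^2$ for $2\le q\le 6$, with the corresponding localized bounds on $w$-windows of length $\sim M^{-2}$ coming from van der Corput) in place of the sharp $L^6$ bound $\sim M$. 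One is then reduced to summing $\sum_B\prod_{i=1}^2 I_{a_i}$, with $I_a$ now the $L^{p/2}$ analogue of the quantity in Lemma \ref{76}; the counting geometry of Lemma \ref{42} is unaffected, but the major-arc/minor-arc dichotomy in the proof of Lemma \ref{76}, and all intervening applications of Cauchy--Schwarz, must be re-run with $p/2$ and $p/4$ in the roles of $6$ and $3$.

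\emph{Main obstacle.} I expect the real difficulty to be twofold. First, the constraints $11\le p\le 12$, $\alpha\ge\beta$, $\alpha+\beta=\tfrac{p}{2}-3$ force $\alpha$ to range up to $3$, so the conjecture contains the still-open regime $2<\alpha<3$ of Conjecture \ref{51}; handling it would require fusing the small-cap machinery above with Vinogradov-type input, precisely the synthesis speculated about in Remark \ref{66345}. Second, even for $\alpha\le 2$ the subcritical exponent leaves essentially no slack: as the example of Section \ref{so} (and \cite{Bo34}) indicates, the threshold $\alpha+\beta=\tfrac{p}{2}-3$ is sharp, with near-constructive interference now occurring on intermediate-scale sets of exactly the critical size, so every decoupling must be used at its sharp exponent and the arithmetic sum in Lemma \ref{76} must close with essentially no room to spare. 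The crux is to verify that a single intermediate scale $N^{-\sigma}$ (and, in the $\tfrac{9}{5}\le\alpha<2$ regime, a single secondary scale) can be chosen for which negligibility of the cubic terms, non-overflow of $\Omega$, the matching of the small-cap loss to $N^{p-3}$, and the closing of the arithmetic hold simultaneously across the full two-parameter family $(p,\alpha)$; it is plausible that near $p=11$ a genuinely new minor-arc term, invisible at $p=12$, becomes dominant and must be controlled by the small cap structure rather than by raw $L^{p/2}$ orthogonality.
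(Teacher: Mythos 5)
There is a fundamental problem: the statement you were asked to prove is Conjecture \ref{24}, which the paper itself leaves open. The paper offers no proof of it; Section \ref{so} only (a) records that the case $\beta=0$ is known from \cite{DGW} for $9\le p<12$, and (b) proves a lower-bound theorem showing the restriction $p\ge 11$ is necessary when $\beta>0$. Your text is accordingly not a proof but a research outline, and it defers exactly the points where the difficulty lies. Concretely: since $\alpha\ge\beta$ and $\alpha+\beta=\frac p2-3$, the conjecture includes $\alpha$ up to $\frac p2-3$, hence all of $2<\alpha\le 3$ at $p=12$; this regime is open even for the $L^{12}$ problem (the paper's Theorem \ref{4linver} covers only $\frac32\le\alpha\le 2$, and Remark \ref{66345} explains why Bourgain's scheme does not obviously extend), and you acknowledge you cannot handle it. Likewise, the proposed replacement of the two-fold parabola decoupling inside Theorem \ref{65} by small cap decoupling, the choice of the intermediate arc length $N^{-\sigma}$, and the subcritical ($L^{p/2}$, $p/2<6$) analogue of the counting Lemma \ref{76} are never formulated, let alone verified to close numerically; these are the heart of the matter, not routine adaptations.

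One more concrete inaccuracy: your claim that the bilinear reduction of Section \ref{oriruigui} goes through "verbatim" because "the geometric series in $K$ still converges since $p/2>2$" is not the right condition. Running the analogue of Proposition \ref{9} at exponent $p$ gives, for a pair of transversal intervals of length $M$, a bound of the shape $\bigl(\tfrac NM\bigr)^{7}M^{p-3+\epsilon}$, and the broad--narrow summation over scales $M\sim NK^{-n}$ with $\sim K^{n}$ intervals per scale produces
\begin{equation*}
\sum_{n} K^{n}\Bigl(\frac{N}{NK^{-n}}\Bigr)^{7}\bigl(NK^{-n}\bigr)^{p-3+\epsilon}\;=\;N^{p-3+\epsilon}\sum_{n}K^{n(11-p)},
\end{equation*}
which is summable (up to a logarithm at $p=11$) precisely when $p\ge 11$. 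Thus the threshold $p=11$ enters already at the bilinearization step, the reduction is borderline rather than automatic, and for $p$ slightly below $11$ it genuinely fails, in accordance with the paper's counterexample. Since the essential estimates are neither stated nor proved, and part of the claimed range is an open problem the paper explicitly flags, the proposal cannot be accepted as a proof of the statement.
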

The case $\beta=0$ was proved in \cite{DGW} in the larger range $9\le p<12$. As mentioned earlier, when $\beta=0$, the curve collapses to a three dimensional curve. However, the next  result shows that the restriction $p\ge 11$ is needed if $\beta>0.$ The new obstruction can be described as constructive interference on spatially disjoint blocks.

\begin{te}
	Assume $p<11$.	
	Let $\omega_3=[-N^\alpha,N^{\alpha}]$, $\omega_4=[-N^{\beta},N^\beta]$, $\alpha\ge \beta$ and  $\alpha+\beta=\frac{p}2-3$. Assume also that $\beta>0$.

	Then, for some $\delta>0$ and $\phi_3(t)=t^3$, $\phi_4(t)=t^4$ we have
	$$\int_{[-1,1]\times [-1,1]\times \omega_3\times \omega_4} |\Ec_{[\frac{N}{2},N],N}|^{p}\gtrsim N^{p-3+\delta}.$$
\end{te}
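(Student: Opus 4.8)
The plan is to construct an explicit subset of the domain $[-1,1]\times[-1,1]\times\omega_3\times\omega_4$ on which $|\Ec_{[\frac N2,N],N}|$ is as large as $\sim N^{1-\theta}$ for a small $\theta>0$, but whose measure is large enough that the $p$-th power integral exceeds $N^{p-3+\delta}$. The key observation is the one flagged in the statement: \emph{constructive interference on spatially disjoint blocks}. Partition $[\frac N2,N]$ into $\sim N^{\theta}$ consecutive blocks $I_r$ of length $\sim N^{1-\theta}$. On a suitable small neighborhood of a rational-type point, each individual block sum $\Ec_{I_r,N}$ behaves like $\sim N^{1-\theta}$ times a unimodular phase; the point is that we can choose the location so that \emph{all} the block phases line up, giving $|\Ec_{[\frac N2,N],N}|\sim N^{\theta}\cdot N^{1-\theta}=N$ on a set larger than the naive $o(N^{-1})\times o(N^{-2})\times o(1)^2$ one used for the lower bound $N^{p-3}$ in the introduction. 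Because the number of blocks is $N^\theta$, the frequencies $n$ in different blocks differ by $\sim N^{1-\theta}$, so after the affine reduction in $n$ (the change of variables used repeatedly in the paper) the relevant constraint on $x_3,x_4$ to keep the cubic and quartic Taylor remainders of $\phi_3,\phi_4$ under control is weaker than in the single-scale analysis, and this is exactly where the extra measure is gained.

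\textbf{Key steps.} First, I would fix $\phi_3(t)=t^3$, $\phi_4(t)=t^4$ and, for $n=h+m$ with $h$ a multiple of $N^{1-\theta}$ and $m\in[0,N^{1-\theta}]$, Taylor-expand: $\phi_3(\frac nN)$ and $\phi_4(\frac nN)$ split into a part that is linear in $m$ (absorbed into $x_1$ after completing a square, exactly as in the arguments of Sections \ref{s6}--\ref{s8}), a quadratic-in-$m$ part that sits at scale $(N^{1-\theta}/N)^2=N^{-2\theta}$ relative to the natural frequency, and cubic/quartic remainders of size $O(m^3/N^2)\cdot|x_3| + O(m^4/N^3)\cdot |x_4|$. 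Second, I would choose the box for $(x_1,x_2,x_3,x_4)$: take $x_1$ in an arc of length $\sim N^{-1}$ around a rational $a/q$ with $q\sim 1$ (or simply $q=1$, i.e.\ near an integer), $x_2$ in an interval of length $\sim N^{\theta-2}$ (not $N^{-2}$!), $x_3$ in an interval of length $\sim N^{3\theta-1}\cdot N^{\alpha}/N^{\alpha}$ — more carefully, of length comparable to a small constant times $N^{3\theta - 3 + ?}$, chosen so the quartic remainder $m^3 x_3/N^2 = o(1)$ for all $m\le N^{1-\theta}$, and $x_4$ similarly so that $m^4 x_4/N^3 = o(1)$. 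The ranges of $x_3,x_4$ here are permitted to be of full size $\sim N^\alpha,\ N^\beta$ up to a constant if $\theta$ is chosen small, but what matters is that the \emph{subset} on which all phases align has the larger-than-expected measure $\sim N^{-1}\cdot N^{\theta-2}\cdot(\text{length}_3)\cdot(\text{length}_4)$. Third, on this set each block contributes $\gtrsim N^{1-\theta}$ with a common phase, so $|\Ec_{[\frac N2,N],N}|\gtrsim N^{\theta}N^{1-\theta}=N$ there, and hence
$$\int |\Ec_{[\frac N2,N],N}|^p \gtrsim N^p \cdot (\text{measure of the good set}).$$
Fourth, I would optimize $\theta$ (and the lengths in $x_2,x_3,x_4$ subject to the remainder constraints) to check that the measure of the good set exceeds $N^{-3+\delta}$ for some $\delta>0$ precisely when $p<11$ and $\beta>0$; the threshold $p=11$, equivalently $\alpha+\beta=\frac52$, should emerge as the exact break-even point of this computation.

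\textbf{Main obstacle.} The delicate part is the bookkeeping of the three competing measure gains and losses: the enlargement of the $x_2$-range from $N^{-2}$ to $N^{\theta-2}$ (gain $N^\theta$), the constraints on $x_3$ and $x_4$ coming from the cubic/quartic remainders (which \emph{shrink} those ranges relative to $N^\alpha,N^\beta$ by powers of $N^\theta$), and the requirement that the $N^\theta$ block phases genuinely align (which restricts the location of the center point and may force $q$ and the $x$'s into a thin set). One must verify that these can be simultaneously satisfied with a net surplus $N^\delta$ over $N^{p-3}$, and that the alignment of phases across blocks is not destroyed by the quadratic-in-$m$ terms at scale $N^{-2\theta}$ — this forces $x_2$ (and the $\phi''$-contributions of $x_3,x_4$) to lie in an interval of length $\sim N^{2\theta-2}$ rather than $N^{\theta-2}$, so the exponent arithmetic has to be done carefully. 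I expect the computation to close exactly at $p=11$ when $\beta>0$, reflecting that the block-interference construction becomes efficient precisely in the small-cap regime $p<12$ but is obstructed once $p\ge 11$; the hypothesis $\beta>0$ is used to ensure the $x_4$-range is genuinely present and contributes, since when $\beta=0$ the curve degenerates and the construction reduces to the known sharp three-dimensional situation of \cite{DGW}.
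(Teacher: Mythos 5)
Your plan has a genuine gap at its central step. You propose to find a set of measure at least $N^{-3+\delta}$ on which all $\sim N^{\theta}$ block phases align, so that $|\Ec_{[\frac N2,N],N}|\sim N$ there. This is impossible. With $\phi_3(t)=t^3$, $\phi_4(t)=t^4$ the phase is the quartic polynomial $n x_1+n^2x_2+n^3\frac{x_3}{N^3}+n^4\frac{x_4}{N^4}$, and the classical major-arc analysis for one-dimensional Weyl sums says that $|\Ec_{[\frac N2,N],N}(x)|\ge cN$ forces each coefficient $\alpha_j$ (of $n^j$) to lie within $O_c(N^{-j})$ of a rational with denominator $O_c(1)$. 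Since $\alpha<3$ and $\beta<4$, the only admissible rationals for the cubic and quartic coefficients are $0$, so $x_3,x_4=O(1)$, while $\|x_1\|\lesssim N^{-1}$ and $\|x_2\|\lesssim N^{-2}$; the set where the full sum is $\gtrsim cN$ therefore has measure $O_c(N^{-3})$, and the route ``size $N$ on a set of measure $N^{-3+\delta}$'' cannot produce any $\delta>0$. The tension is intrinsic: the fat major arcs you want to exploit belong to the \emph{individual} blocks of length $M=N^{1-\theta}$, but demanding that all blocks interfere constructively simultaneously re-imposes the length-$N$ major-arc constraints and exactly cancels the gain. Your bookkeeping of the $x_2$, $x_3$, $x_4$ ranges never has to be carried out, because the alignment requirement already pins the measure at $N^{-3}$.

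The paper's proof keeps your (correct) intuition that shorter blocks have fatter major arcs, but aggregates the blocks in $L^p$ rather than pointwise: by Lemma \ref{107} (the Fourier supports of the $\Ec_{J,N}$, $J$ of length $M$, lie in disjoint boxes), one has $\sum_{J}\int|\Ec_{J,N}|^{p}\lesssim\int|\Ec_{[\frac N2,N],N}|^{p}$, so no phase alignment between blocks is ever needed. For a single block one uses periodicity to pass to $x_1\in[-N^2,N^2]$, $x_2\in[-N,N]$, changes variables to rewrite $\Ec_{J,N}$ as a length-$M$ quartic Weyl sum in new variables $y$ ranging over a box containing $[-o(N^2),o(N^2)]\times[-o(N),o(N)]\times[-o(N^{\alpha-3}),o(N^{\alpha-3})]\times[-o(N^{\beta-4}),o(N^{\beta-4})]$, and applies constructive interference on $\sim N^3$ periods, each contributing measure $\sim M^{-10}$; this needs $M^{-3}\lesssim N^{\alpha-3}$ and $M^{-4}\lesssim N^{\beta-4}$, i.e. $M=\max\{N^{1-\alpha/3},N^{1-\beta/4}\}$, which is $N^{1-\epsilon}$ with $\epsilon>0$ precisely because $\beta>0$. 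Summing over the $N/M$ blocks gives $\gtrsim N^{8}M^{p-11}$, which beats $N^{p-3}$ by $N^{\epsilon(11-p)}$ exactly when $p<11$ — this is where the threshold emerges, not from an optimization of aligned-phase sets. If you want to salvage a pointwise construction you would have to show that, on one block's major arc, the remaining blocks do not cancel the main contribution; the superadditivity lemma is the device that makes this unnecessary.
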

\begin{proof}

	Lemma \ref{107} shows that the integral is greater than
	$$\sum_{J\subset I}\int_{[-1,1]\times [-1,1]\times \omega_3\times \omega_4} |\Ec_{J,N}|^{p},$$
	where the sum runs over intervals $J$ of length $M<N$, partitioning $[\frac{N}2,N]$. The parameter $M$ will be determined later.
	In some sense, the components $\Ec_{J,N}$ behave as if they were spatially supported on pairwise disjoint sets.
	
	By periodicity
	$$\int_{[-1,1]\times [-1,1]\times \omega_3\times \omega_4} |\Ec_{J,N}|^{p}=N^{-3}\int_{[-N^2,N^2]\times [-N,N]\times \omega_3\times \omega_4} |\Ec_{J,N}|^{p}.$$
	 Write $H=[h+1,h+M]$. Note that
	$$|\Ec_{J,N}(x)|=|\sum_{1\le m\le M}e(my_1+m^2y_2+m^3y_3+m^4y_4)|$$
	where
	$$\begin{cases}y_1=x_1+2h{x_2}+\frac{3h^2}{N^3}x_3+\frac{4h^3}{N^4}x_4\\y_2=x_2+\frac{3h}{N^3}x_3+\frac{6h^2}{N^4}x_4\\y_3=\frac{x_3}{N^3}+\frac{4h}{N^4}x_4\\y_4=\frac{x_4}{N^4}
	\end{cases}.$$
	This change of variables maps $[-N^2,N^2]\times [-N,N]\times \omega_3\times \omega_4$ to a set containing $$S=[-o(N^2),o(N^2)]\times [-o(N),o(N)]\times [-o(N^{\alpha-3}),o(N^{\alpha-3})]\times [-o(N^{\beta-4}),o(N^{\beta-4})].$$  We have used that $3\ge \alpha\ge \beta-1$. Thus
	$$\int_{[-1,1]\times [-1,1]\times \omega_3\times \omega_4} |\Ec_{J,N}(x)|^{p}dx$$$$\ge N^{4}\int_{S}|\sum_{1\le m\le M}e(my_1+m^2y_2+m^3y_3+m^4y_4)|^{p}dy.$$
	Let $$M=\max\{N^{1-\frac{\alpha}{3}},N^{1-\frac{\beta}4}\}.$$
	Note that since $\beta>0$ we have that $M= N^{1-\epsilon}$ for some  $\epsilon>0$. Note also that
	$$[0,o(M^{-3})]\times [0,o( M^{-4})]\subset[0,o(N^{\alpha-3})]\times [0,o(N^{\beta-4})]. $$
	Using constructive interference we get
	$$\int_{S}|\sum_{1\le m\le M}e(my_1+m^2y_2+m^3y_3+m^4y_4)|^{p}dy\gtrsim M^{p-10}N^3.$$
	Putting things together we conclude that
	$$\int_{[-1,1]\times [-1,1]\times \omega_3\times \omega_4} |\Ec_{[\frac{N}{2},N],N}|^{p}\gtrsim \frac{N}{M}N^4M^{p-10}N^3.$$
	Note that this is $\ge N^{p-3+\delta}$, for some $\delta>0$.
	
\end{proof}
\begin{lem}
\label{107}
Let $\Rc$ be a collections of rectangular boxes $R$ in $\R^n$, with pairwise disjoint doubles $2R$.
Let $F$ be a Schwartz function in $\R^n$ that can be written as
$$F=\sum_{R\in\Rc}F_R,$$
with the spectrum of $F_R$ inside $R$. Then for each $2\le p\le \infty$ we have
$$(\|F_R\|_p)_{l^p(\Rc)}\lesssim \|F\|_p.$$
The implicit constant is independent of $F$ and $\Rc$.
\end{lem}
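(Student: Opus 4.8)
The plan is to deduce this from a single application of a Littlewood--Paley type / almost-orthogonality inequality adapted to an arbitrary family of boxes with disjoint doubles. The key idea is that, because the doubles $2R$ are pairwise disjoint, one can choose for each $R\in\Rc$ a smooth Fourier multiplier $\chi_R$ that equals $1$ on $R$, is supported on $2R$, and has $\|\check{\chi}_R\|_{L^1}\lesssim 1$ uniformly in $R$ (this uses only that $R$ is a rectangular box, via an affine change of variables reducing to the unit cube, where one picks a fixed bump function). Since the spectrum of $F_R$ lies in $R$, we have $F_R=\chi_R*F$ pointwise, hence by Young's inequality $\|F_R\|_p\le\|\check{\chi}_R\|_{L^1}\|F\|_p\lesssim\|F\|_p$ for every single $R$; but this is far too lossy for the $\ell^p$ sum. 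To get the $\ell^p(\Rc)$ gain, I would run the argument in two endpoint regimes and interpolate.

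First I would treat $p=2$. Here the disjointness of the supports (even the boxes $R$ themselves, not just the doubles, are disjoint) gives $\sum_R\|F_R\|_2^2=\sum_R\|\widehat{F_R}\|_2^2=\|\widehat{F}\|_2^2=\|F\|_2^2$ by Plancherel, since $\widehat{F}=\sum_R\widehat{F_R}$ with the $\widehat{F_R}$ having disjoint supports. So the case $p=2$ holds with constant $1$. Next I would treat $p=\infty$: here $\sup_R\|F_R\|_\infty=\sup_R\|\check{\chi}_R*F\|_\infty\le(\sup_R\|\check{\chi}_R\|_{L^1})\|F\|_\infty\lesssim\|F\|_\infty$, which is exactly the $\ell^\infty(\Rc)$ statement. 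The general case $2\le p\le\infty$ then follows by interpolation: the sublinear operator $F\mapsto(F_R)_{R\in\Rc}$, viewed as mapping $L^p(\R^n)$ to $\ell^p(\Rc;L^p(\R^n))$, is bounded at the two endpoints $p=2$ and $p=\infty$ with uniform constants, so by (vector-valued) Riesz--Thorin interpolation it is bounded for all intermediate $p$, with an implicit constant independent of $F$ and of $\Rc$.

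I would carry out the steps in that order: (i) construct the normalized multipliers $\chi_R$ with $\|\check{\chi}_R\|_{L^1}\lesssim1$; (ii) record $F_R=\check{\chi}_R*F$; (iii) prove the $p=2$ identity via Plancherel and disjointness of the $R$'s; (iv) prove the $p=\infty$ bound via Young; (v) interpolate. The main obstacle is really step (i): making the bound $\|\check{\chi}_R\|_{L^1}\lesssim1$ genuinely uniform over \emph{all} boxes $R$, including very eccentric ones. This is handled by the affine invariance of the $L^1$ norm of a Fourier transform — if $T$ is an invertible affine map sending the unit cube to $R$ and $\chi$ is a fixed Schwartz bump equal to $1$ on the cube and supported in its double, then $\chi_R=\chi\circ T^{-1}$ has $\check{\chi}_R$ equal to $\check{\chi}$ precomposed with $(T^*)^{-1}$ times the Jacobian factor, and a change of variables shows $\|\check{\chi}_R\|_{L^1}=\|\check{\chi}\|_{L^1}$, a fixed finite constant. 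The disjointness of the doubles $2R$ guarantees $\chi_R$ is supported where we need it and plays no further role in the estimate beyond ensuring the decomposition is meaningful; the sharper disjointness of the $R$'s is what powers the $p=2$ endpoint.
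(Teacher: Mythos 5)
Your proposal is correct and is exactly the route the paper intends: the paper's entire proof is ``Interpolate between $2$ and $\infty$,'' and you have supplied the two endpoint bounds (Plancherel with disjoint Fourier supports at $p=2$, uniform $L^1$-normalized multipliers adapted to the doubles $2R$ at $p=\infty$) together with the vector-valued interpolation that the paper leaves implicit. The only cosmetic point is to run the interpolation for the linear operator $F\mapsto(\check{\chi}_R*F)_{R}$, which agrees with $F\mapsto(F_R)_R$ on the functions in question, as you essentially already observe.
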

\begin{proof}
Interpolate between $2$ and $\infty$.
\end{proof}

\section{Auxiliary results}
This section records two auxiliary results that are used repeatedly throughout the paper.

\begin{lem}
	\label{18}
	Assume $t,s\in[\frac12,1]$ satisfy $|t-s|\sim 1$.	
	The Jacobian of the transformation $y=\psi(x)$
	$$\begin{cases}y_1=x_1+2tx_2+\phi_3'(t)x_3+\phi_4'(t)x_4\\y_2=x_1+2sx_2+\phi_3'(s)x_3+\phi_4'(s)x_4\\y_3=\frac{2x_2}{N}+\phi_3''(t)\frac{x_3}{N}+\phi_4''(t)\frac{x_4}{N} \\y_4=\frac{2x_2}{N}+\phi_3''(s)\frac{x_3}{N}+\phi_4''(s)\frac{x_4}{N}
	\end{cases}
	$$	
	is $\sim \frac1{N^2}.$
	
	Moreover, $\psi$ maps cubes $Q$ with side length $L$ to subsets of rectangular boxes of dimensions roughly $L\times L\times \frac{L}{N}\times \frac{L}{N}$.
	
	If the cube $Q$ is centered at the origin, $\psi(Q)$ contains the rectangular box $[-o(L),o(L)]^2\times [-o(\frac{L}{N}),o(\frac{L}{N})]^2$.
\end{lem}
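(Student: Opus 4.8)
The statement to prove is Lemma \ref{18}, a linear-algebra computation about the Jacobian of $\psi$ together with the geometric description of images of cubes. Let me sketch a proof plan.

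\medskip

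\textbf{Proof plan.} The transformation $\psi$ is linear, so its Jacobian is simply the determinant of the coefficient matrix
$$
\det\begin{bmatrix}
1 & 2t & \phi_3'(t) & \phi_4'(t)\\
1 & 2s & \phi_3'(s) & \phi_4'(s)\\
0 & \tfrac2N & \tfrac{\phi_3''(t)}{N} & \tfrac{\phi_4''(t)}{N}\\
0 & \tfrac2N & \tfrac{\phi_3''(s)}{N} & \tfrac{\phi_4''(s)}{N}
\end{bmatrix}.
$$
First I would pull out the factor $\tfrac1{N^2}$ from the last two rows, reducing matters to a matrix with entries of size $O(1)$. Then I would subtract row $1$ from row $2$ and row $3$ from row $4$; by the Mean Value Theorem each resulting entry in row $2$ is $(t-s)$ times a first derivative (or a constant), and each entry in row $4$ is $(t-s)$ times a second derivative of $\phi_k$, evaluated at intermediate points. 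Pulling out a factor $(t-s)$ from both the new row $2$ and the new row $4$, and using $|t-s|\sim 1$, the determinant becomes $\tfrac{(t-s)^2}{N^2}$ times the determinant of a $4\times 4$ matrix whose relevant $2\times 2$ lower-right block is governed by a difference quotient of the $2\times2$ determinant in \eqref{2}. One then checks that the full $4\times 4$ determinant is $\sim 1$: expanding along the first column (which has a single nonzero entry after the row operations) reduces to a $3\times 3$ determinant, and a further expansion isolates a $2\times 2$ minor that, by another application of the Mean Value Theorem and a Taylor expansion argument, is comparable to $\det\begin{bmatrix}\phi_3^{(3)}&\phi_3^{(4)}\\ \phi_4^{(3)}&\phi_4^{(4)}\end{bmatrix}$, hence $\sim 1$ by \eqref{2}, while the upper bound follows from \eqref{1}. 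This gives $|\det|\sim \tfrac1{N^2}$.

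\medskip

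\textbf{Image of a cube.} For the second assertion, observe directly from the formulas that the first two coordinates $y_1,y_2$ are combinations of $x_1,x_2,x_3,x_4$ with coefficients $O(1)$ (using \eqref{1}), so over a cube $Q$ of side length $L$ they range over intervals of length $O(L)$; the last two coordinates $y_3,y_4$ have an extra factor $\tfrac1N$ in every coefficient, so they range over intervals of length $O(\tfrac LN)$. Hence $\psi(Q)$ is contained in a box of dimensions $\sim L\times L\times \tfrac LN\times \tfrac LN$.

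\medskip

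\textbf{Cube centered at the origin.} For the final claim, since $\psi$ is a linear isomorphism (the Jacobian is nonzero), $\psi(Q)$ is a parallelepiped centered at the origin. Because $\det\psi\sim \tfrac1{N^2}$ and $\psi(Q)$ is contained in a box of volume $\sim L^4/N^2$, the parallelepiped $\psi(Q)$ is \emph{non-degenerate} in a quantitative sense: each of its edge vectors has length comparable to the corresponding side of the containing box (this follows from comparing the volume $\sim L^4/N^2$ of $\psi(Q)$ with the product of the edge lengths and using that each edge length is at most the corresponding box side). Consequently $\psi(Q)$ contains a box of dimensions $\sim L\times L\times \tfrac LN\times \tfrac LN$ centered at the origin, which in particular contains $[-o(L),o(L)]^2\times [-o(\tfrac LN),o(\tfrac LN)]^2$ once the implicit constant is chosen small enough. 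I expect the main obstacle to be the second step of the determinant computation: carefully organizing the row operations and the double application of the Mean Value Theorem / Taylor expansion so that the surviving $2\times2$ minor is visibly comparable to the determinant in \eqref{2}, while keeping track of the intermediate evaluation points. This is a routine but slightly delicate bookkeeping exercise.
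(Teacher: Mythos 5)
Your reduction of the Jacobian to a $4\times4$ determinant of size-$O(1)$ entries and the containment $\psi(Q)\subset$ box are fine, but the step you defer as ``routine but slightly delicate bookkeeping'' is precisely where the proof lives, and as sketched it has a gap. After subtracting row $1$ from row $2$ and row $3$ from row $4$, the Mean Value Theorem gives in \emph{each column} an intermediate point that depends on that column: the new second row is $\bigl(2(s-t),(s-t)\phi_3''(\xi_3),(s-t)\phi_4''(\xi_4)\bigr)$ with $\xi_3\ne\xi_4$ in general, and the new fourth row involves $\phi_3'''(\eta_3),\phi_4'''(\eta_4)$ at two further unrelated points. The surviving $2\times2$ minor therefore has its four entries evaluated at four different points, and it is \emph{not} ``comparable to the determinant in \eqref{2} by another application of the MVT'': a determinant whose entries are sampled at independent points can degenerate by cancellation even when every two-point determinant as in \eqref{2} is bounded below, so the lower bound $|J|\gtrsim N^{-2}$ does not follow entrywise (the upper bound is indeed immediate from \eqref{1}). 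To close this you must exploit that \eqref{2} holds uniformly over all pairs $(t,s)$, hence the mixed determinant has a fixed sign: for instance, write the entries as integrals ($\phi_k'(s)-\phi_k'(t)=\int_t^s\phi_k''$, $\phi_k''(s)-\phi_k''(t)=\int_t^s\phi_k'''$, plus one more row operation) and the minor as a double integral of two-point determinants of constant sign, which then cannot cancel; alternatively, this is exactly what the paper's route provides, namely writing the second-derivative rows as limits of difference quotients of first-derivative rows and invoking the generalized Mean Value Theorem for such determinants (Polya--Szeg\H{o}), which factors out $(t-s)^2(t+\epsilon-s)(s+\epsilon-t)$ with a \emph{single} intermediate point per row. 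As written, your plan is missing this idea.

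The final claim also rests on an incorrect assertion. The edge vectors of $\psi(Q)$ are $\psi(Le_i)$, and all four have Euclidean length $O(L)$ (typically $\sim L$); none has length $\sim L/N$, so ``each edge vector has length comparable to the corresponding side of the containing box'' is false, and indeed the volume $\sim L^4N^{-2}$ of $\psi(Q)$ is much smaller than the product of its edge lengths --- the parallelepiped is strongly sheared. Your underlying volume idea can be rescued by a different mechanism: for a centrally symmetric convex body $K$ inside a box $B$ with $|K|\gtrsim|B|$, a separating-hyperplane/slab estimate shows $K\supset cB$ for a universal $c$, which gives the claim since $|\psi(Q)|\sim L^4N^{-2}\sim|B|$. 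The paper instead argues directly: it inverts the linear map by Cramer's rule and checks $|x_i|\lesssim N^2\bigl(\frac{|y_1|+|y_2|}{N^2}+\frac{|y_3|+|y_4|}{N}\bigr)$, so every $y$ in the small box is $\psi(x)$ for some $x\in Q$. Either fix is short, but the containment does not follow from the reasoning you gave.
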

\begin{proof}
	Let $\phi_1(u)=u$, $\phi_2(u)={u^2}$. Then the Jacobian is	
	\begin{equation}
	\label{fe7}
	\frac{1}{N^2}\operatorname{det}\left[ \begin{array}{cccc}
	\phi_1'(t) & \phi_2'(t) & \phi_3'(t) & \phi_4'(t) \\ \phi_1'(s) & \phi_2'(s) & \phi_3'(s) & \phi_4'(s)\\ \phi_1''(t) & \phi_2''(t) & \phi_3''(t) & \phi_4''(t) \\ \phi_1''(s) & \phi_2''(s) & \phi_3''(s) & \phi_4''(s)
	\end{array} \right].
	\end{equation}
	Note that $$\operatorname{det}\left[ \begin{array}{cccc}
	\phi_1'(t) & \phi_2'(t) & \phi_3'(t) & \phi_4'(t) \\ \phi_1'(s) & \phi_2'(s) & \phi_3'(s) & \phi_4'(s)\\ \phi_1''(t) & \phi_2''(t) & \phi_3''(t) & \phi_4''(t) \\ \phi_1''(s) & \phi_2''(s) & \phi_3''(s) & \phi_4''(s)
	\end{array} \right]=\lim_{\epsilon\to 0}\frac1{\epsilon^2}\operatorname{det}\left[ \begin{array}{cccc}
	\phi_1'(t) & \phi_2'(t) & \phi_3'(t) & \phi_4'(t) \\ \phi_1'(s) & \phi_2'(s) & \phi_3'(s) & \phi_4'(s)\\ \phi_1'(t+\epsilon) & \phi_2'(t+\epsilon) & \phi_3'(t+\epsilon) & \phi_4'(t+\epsilon) \\ \phi_1'(s+\epsilon) & \phi_2'(s+\epsilon) & \phi_3'(s+\epsilon) & \phi_4'(s+\epsilon)
	\end{array} \right].$$
	A generalization of the Mean-Value Theorem (see \cite{PS}, Voll II, part V, Chap 1, No. 95) guarantees that
	$$\operatorname{det}\left[ \begin{array}{cccc}
	\phi_1'(t) & \phi_2'(t) & \phi_3'(t) & \phi_4'(t) \\ \phi_1'(s) & \phi_2'(s) & \phi_3'(s) & \phi_4'(s)\\ \phi_1'(t+\epsilon) & \phi_2'(t+\epsilon) & \phi_3'(t+\epsilon) & \phi_4'(t+\epsilon) \\ \phi_1'(s+\epsilon) & \phi_2'(s+\epsilon) & \phi_3'(s+\epsilon) & \phi_4'(s+\epsilon)
	\end{array} \right]=$$
	\begin{align*}
	&=\epsilon^2(t-s)^2(t+\epsilon-s)(s+\epsilon-t)\operatorname{det}\left[ \begin{array}{cccc}
	\phi_1'(\tau_1) & \phi_2'(\tau_1) & \phi_3'(\tau_1) & \phi_4'(\tau_1) \\ \phi_1''(\tau_2) & \phi_2''(\tau_2) & \phi_3''(\tau_2) & \phi_4''(\tau_2)\\ \phi_1'''(\tau_3) & \phi_2'''(\tau_3) & \phi_3'''(\tau_3) & \phi_4'''(\tau_3) \\ \phi_1''''(\tau_4) & \phi_2''''(\tau_4) & \phi_3''''(\tau_4) & \phi_4''''(\tau_4)
	\end{array} \right]\\&=\epsilon^2(t-s)^2(t+\epsilon-s)(s+\epsilon-t)\operatorname{det}\left[ \begin{array}{cccc}
	\phi_3'''(\tau_3) & \phi_4'''(\tau_3) \\ \phi_3''''(\tau_4) & \phi_4''''(\tau_4)
	\end{array} \right]
	\end{align*}
	for some $\tau_i\in[\frac12,1]$ depending on $t,s,\epsilon$. The conclusion follows by letting $\epsilon\to 0$ and using \eqref{2}.
	
	The second statement is immediate. To prove the last one, assume
	$$y\in [-cL,cL]^2\times [-c\frac{L}{N},c\frac{L}{N}]^2,$$
	for some small enough $c$, independent of $N$. We need to prove that $y=\psi(x)$ for some $x\in Q$. This can be seen by solving for $x$. For example,
	$$x_1\sim N^2\operatorname{det}\left[ \begin{array}{cccc}
	y_1 & \phi_2'(t) & \phi_3'(t) & \phi_4'(t) \\ y_2 & \phi_2'(s) & \phi_3'(s) & \phi_4'(s)\\ y_3 & \frac{\phi_2''(t)}{N} & \frac{\phi_3''(t)}{N} & \frac{\phi_4''(t)}{N} \\ y_4 & \frac{\phi_2''(s)}{N} & \frac{\phi_3''(s)}{N} & \frac{\phi_4''(s)}{N}
	\end{array} \right].$$
	This and \eqref{1} show that
	$$|x_1|\lesssim N^2(\frac{|y_1|+|y_2|}{N^2}+\frac{|y_3|+|y_4|}{N}).$$
    The same inequality holds for all $x_i$, which proves the desired statement. 	

	\end{proof}
	\bigskip
	
	\begin{lem}
		\label{22}Let $\gamma$  be a Schwartz function supported on $[-2,2]$. Define the smooth Weyl sums for $u,w,v\in\R$
		$$G(u,w,v)=\sum_{k\in \Z}\gamma(k/M)e(ku+k^2w+k^3v).$$
		
		Let $1\le  b\le q\le M$ with  $(b,q)=1$. Assume that $\dist(w-\frac{b}{q},\Z):=\varphi\le \frac{1}{qM}$ and that $|v|\lesssim \frac1{M^3}$. Then  for each $\epsilon>0$ we have
		$$|G(u,w,v)|\lesssim_\epsilon \frac{M^\epsilon}{q^{1/2}}\min\{M,\frac1{\varphi^{1/2}}\}$$
		if  $$u\in \Mc=\bigcup_{m\in\Z}[\frac{m}{q}-\varphi M^{1+\epsilon},\frac{m}{q}+\varphi M^{1+\epsilon}]$$
		and
		$$|G(u,w,v)|\lesssim_\epsilon M^{-100}$$
		if  $$u\not\in \Mc.$$
	\end{lem}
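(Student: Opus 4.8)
The plan is to reduce the cubic Weyl sum to a quadratic one, and then invoke the classical theory of quadratic Weyl sums (Gauss sums plus van der Corput / the theory of major arcs). First I would observe that, since $|v|\lesssim M^{-3}$ and $k$ ranges over $|k|\le 2M$, the cubic phase $k^3v$ varies by $O(1)$ over the whole support of $\gamma(\cdot/M)$; in fact on any subinterval of length $\ell$ it varies by $O(\ell^3/M^3)$. So by summation by parts (Abel summation against the smooth cutoff, or a dyadic/telescoping decomposition of $[-2M,2M]$ into $O(\log M)$ pieces on each of which $k^3v$ is essentially constant up to lower-order linear and quadratic corrections that get absorbed into $u$ and $w$), it suffices to prove the same bounds for the genuinely quadratic smooth sum $\sum_k \gamma(k/M)e(ku+k^2w)$, at the cost of an $M^\epsilon$ (really $(\log M)^{O(1)}$) factor and a harmless $M^\epsilon$-thickening of the major arc $\Mc$. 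This is exactly the kind of cubic-term-as-perturbation argument already used in the paper (cf. the discussion around \eqref{uy4tc674t7cyrfu} and \eqref{huiyfyrfyrfy}), so I would keep it brief.

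Next, for the quadratic smooth sum with $w$ close to the rational $b/q$ in lowest terms, the standard move is to split $k$ into residue classes mod $q$, writing $k = qr + a$ with $0\le a<q$. The $k^2w$ phase then factors as $e(a^2 b/q)$ (a complete quadratic Gauss sum in $a$, of modulus $\sqrt q$) times a phase $e(\text{(quadratic in }r)\cdot q^2\varphi \pm \ldots)$ depending smoothly on $r$, plus the linear term $e(ku)$. Carrying out the $a$-sum produces the Gauss-sum factor $|S(a^2,b;q)|\lesssim q^{1/2}$; the $r$-sum is a smooth quadratic exponential sum in the variable $r$, which ranges over $\sim M/q$ values, with leading quadratic coefficient $\sim q^2\varphi$ and linear coefficient involving $u$. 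For this inner smooth sum I would apply the classical quadratic-sum estimate (Poisson summation / van der Corput lemma for smooth sums): it is $\lesssim \min\{M/q,\ (q^2\varphi)^{-1/2}\}$ when the linear coefficient $q\cdot(\text{dist of }u\text{ to the relevant rationals})$ is small — equivalently $u\in\Mc$ — and it is $O(M^{-100})$ (rapid decay, exploiting the Schwartz cutoff $\gamma$) when that linear coefficient is not in the stationary-phase window, i.e.\ when $u\notin\Mc$. Multiplying the Gauss-sum bound $q^{1/2}$ by $(M/q)\cdot$(the min gives)$\min\{M,\varphi^{-1/2}\}\cdot q^{-1}$... more precisely $q^{1/2}\cdot\min\{M/q,(q^2\varphi)^{-1/2}\} = \min\{M/q^{1/2},\ q^{-1/2}\varphi^{-1/2}\} = q^{-1/2}\min\{M,\varphi^{-1/2}\}$, which is exactly the claimed bound; and the off-major-arc case inherits the $M^{-100}$ decay.

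The main obstacle is bookkeeping the interplay of the three quantities $q$, $\varphi$, and the distance of $u$ to the rationals $m/q$, and making sure the stationary-phase/van der Corput dichotomy lands precisely on the set $\Mc$ with the stated $\varphi M^{1+\epsilon}$ half-width. Concretely: after the residue decomposition, the inner smooth sum in $r$ has phase $e(\beta r^2 + \lambda r)$ with $|\beta|\sim q^2\varphi$ and $\lambda$ essentially $qu \bmod 1$ shifted by the contribution of the $a$-dependent cross terms; stationary phase contributes only if the critical point $r^* = -\lambda/(2\beta)$ falls inside the (smoothed) summation range $|r|\lesssim M/q$, i.e.\ if $\mathrm{dist}(qu,\Z)\lesssim |\beta| M/q \sim q\varphi M$, and tracking how this unwinds through $u\mapsto u+ m/q$ over the $q$ residues is what produces the union over $m$ defining $\Mc$, with the $M^\epsilon$ slack coming from the smooth-sum tail estimate. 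I would handle this carefully but not belabor the constants, since it is classical; the one genuinely delicate point worth stating explicitly is that the Schwartz decay of $\gamma$ is what upgrades "small" to "$O(M^{-100})$" off the major arcs, and that the $|v|\lesssim M^{-3}$ hypothesis is exactly the threshold making the reduction to the quadratic case lossless.
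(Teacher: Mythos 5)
Your main analysis of the quadratic sum is essentially the paper's: the paper also splits $k=rq+k_1$ into residue classes, applies Poisson summation, and obtains a complete quadratic Gauss sum (of normalized size $q^{-1/2}$) multiplied by an oscillatory integral that is estimated by the trivial bound or the second-derivative test on the arcs and by repeated integration by parts (using the Schwartz cutoff) off the arcs; your bookkeeping $q^{1/2}\min\{M/q,(q^{2}\varphi)^{-1/2}\}=q^{-1/2}\min\{M,\varphi^{-1/2}\}$ is correct. The genuine gap is your preliminary step of stripping off the cubic phase. Abel summation against $e(k^{3}v)$ bounds $|G|$ by $\sup_K$ of the sharply truncated sums $S_K=\sum_{k\le K}\gamma(k/M)e(ku+k^{2}w)$, and for these the off-arc conclusion is false: the $O(M^{-100})$ decay comes precisely from the smoothness of the summand, and a sharp truncation destroys it. Indeed, since consecutive partial sums differ by $\gamma(K/M)e(Ku+K^{2}w)$, one always has $\sup_K|S_K|\ge\frac12\max_K|\gamma(K/M)|\gtrsim 1$, so no bound of the form $M^{-100}$ can survive this reduction (the on-arc bound may survive, with admissible error terms, but not the off-arc one). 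The alternative you propose, a dyadic decomposition into $O(\log M)$ blocks on which $k^{3}v$ is ``essentially constant'' after absorbing linear and quadratic corrections into $u$ and $w$, also fails: on the blocks of length $\sim M$, which carry the bulk of the sum, the cubic Taylor remainder is of size $\Theta(1)$ in the phase, hence not negligible; shrinking the blocks makes it small but introduces polynomially many blocks, the trivial summation of per-block bounds then loses a power of $M$ when $\varphi\gg M^{-2}$, and the block-dependent shifts of $u$ (by $3k_0^{2}v=O(1/M)$) move the arcs by more than the width $\varphi M^{1+\epsilon}$ of $\Mc$ when $\varphi$ is small, so the off-arc statement for the original $\Mc$ cannot be assembled from per-block estimates.

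The repair is simply not to remove the cubic term, which is what the paper does: after Poisson summation it rides along into the integral $J=M\int\gamma(z)e\bigl(M(u+\frac{m}{q})z+\varphi M^{2}z^{2}+vM^{3}z^{3}\bigr)dz$, where it is harmless because $|vM^{3}|\lesssim 1$; the second-derivative test is only invoked when $\varphi M^{2}\gg 1\gtrsim|vM^{3}|$, so the quadratic coefficient still dominates, and the nonstationary-phase estimate giving the off-arc decay is likewise unaffected. Equivalently, since $|vM^{3}|\lesssim 1$ you may absorb the cubic factor into the weight, replacing $\gamma(x)$ by $\gamma_M(x)=\gamma(x)e(vM^{3}x^{3})$, which is supported in $[-2,2]$ with derivative bounds uniform in $M$ and $v$; with that single change your Gauss-sum plus Poisson/van der Corput argument goes through and reproduces the paper's proof. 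Note that this is exactly where the hypothesis $|v|\lesssim M^{-3}$ is used, and your version of the reduction discards the one structural feature (smoothness of the summand) that the $u\notin\Mc$ estimate needs.
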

	\begin{proof}
		Invoking periodicity, we may assume that  $w=\frac{b}q+\varphi$ with $|\varphi|\le \frac{1}{Mq}$. Using the representation $k=rq+k_1$, $0\le k_1\le q-1$ and the Poisson summation formula we get
		$$G(u,w,v)=\sum_{k_1=0}^{q-1}e(k_1^2b/q)\sum_{r\in\Z}\gamma(\frac{k_1+rq}{M})e((rq+k_1)u+(rq+k_1)^2\varphi+(rq+k_1)^3v)$$
		$$=\sum_{m\in\Z}\left[\frac1q\sum_{k_1=0}^{q-1}e(k_1^2b/q-k_1m/q)\right]\left[\int_\R\gamma(y/M)e((u+\frac{m}q)y+\varphi y^2+vy^3)dy\right]$$
		\begin{equation}
		\label{e2}
		=\sum_{m\in\Z}S(b,m,q)J(u,v,\varphi,m,q)
		\end{equation}
		where
		$$S(b,m,q)=\frac1q\sum_{k=0}^{q-1}e(k^2b/q-km/q)$$
		\begin{align*}
		J(u,v,\varphi,m,q)&=\int_{\R}\gamma(y/M)e((u+\frac{m}q)y+\varphi y^2+vy^3)dy\\&=M\int_{\R}\gamma(z)e(M(u+\frac{m}q)z+\varphi M^2 z^2+vM^3z^3)dz.
		\end{align*}
		If $|\varphi|\lesssim \frac1{M^2}$, we are content with the bound $|J(u,v,\varphi,m,q)|\lesssim M$.
		
		Assume now that $|\varphi|\gg \frac1{M^2}$.  The classical van der Corput estimate (second derivative test) reads
		$$|\int_{\R}\gamma(z)e(Az+Bz^2+Cz^3)dz|\lesssim |B|^{-1/2},$$
		if $|B|\gg |C|$. In our case $|B|=|\varphi| M^2\gg 1\gtrsim |vM^3|=|C|$.
		In either case we get
		$$|J(u,v,\varphi,m,q)|\lesssim \min\{M,|\varphi|^{-1/2}\}.$$
		On the other hand, repeated integration by parts (first derivative test) shows that for each $\alpha>0$
		$$|J(u,v,\varphi,m,q)|\lesssim_{\alpha}\frac1{A^\alpha} $$
		when $|A|=M|u+\frac{m}{q}|\ge M^{\epsilon}\varphi M^2$. Thus, when $u\in\Mc$, only $O(M^{\epsilon})$ values of $m$ will have a non-negligible contribution to the sum, while if $u\not\in\Mc$ then the contribution from all $m$ will be negligible.
		
		  Combining these with the classical estimate
		$$|S(b,m,q)|\lesssim \frac{1}{\sqrt{q}}$$
	    finishes the argument.
		
	\end{proof}

\end{document}